\title[On the expansion of certain vector-valued characters of $U_q(\mathfrak{gl}_n)$]{On the expansion of certain vector-valued characters of $U_q(\mathfrak{gl}_n)$ with respect to the Gelfand-Tsetlin basis}
\author{Vidya Venkateswaran}
\address{Department of Mathematics, MIT, Cambridge, MA 02139}
\email{\href{mailto:vidyav@math.mit.edu}{vidyav@math.mit.edu}}
\thanks{Research supported by NSF Mathematical Sciences Postdoctoral Research Fellowship DMS-1204900}
\subjclass[2000]{}
\keywords{}
 \DeclareMathOperator{\Tr}{Tr}
 \DeclareMathOperator{\Proj}{Proj}
\newtheorem{theorem}{Theorem}[section]
\newtheorem{lemma}[theorem]{Lemma}
\newtheorem{definition}[theorem]{Definition}
\newtheorem{proposition}[theorem]{Proposition}
\newtheorem{corollary}[theorem]{Corollary}
\newtheorem*{remark}{Remark}
\newtheorem*{remarks}{Remarks}
\newcommand{\N}{{\mathbb N}}
\newcommand{\Z}{{\mathbb Z}}
\newcommand{\C}{{\mathbb C}}
\begin{document}

\begin{abstract}
Macdonald polynomials are an important class of symmetric functions, with connections to many different fields.  Etingof and Kirillov showed an intimate connection between these functions and representation theory: they proved that Macdonald polynomials arise as (suitably normalized) vector-valued characters of irreducible representations of quantum groups.  In this paper, we provide a branching rule for these characters.  The coefficients are expressed in terms of skew Macdonald polynomials with plethystic substitutions.  We use our branching rule to give an expansion of the characters with respect to the Gelfand-Tsetlin basis.  Finally, we study in detail the $q=0$ case, where the coefficients factor nicely, and have an interpretation in terms of certain $p$-adic counts.
\end{abstract}

\maketitle

\section{Introduction}
Macdonald polynomials were originally discovered in the 1980s \cite{Mac, MacP}, and have found a variety of
uses in mathematics, appearing in a number of disparate fields (mathematical physics, combinatorics,
representation theory and number theory, among others).  These polynomials have the key property of
being invariant under all permutations of their $n$ variables.  They are indexed by partitions
$\lambda$ with length at most $n$, and form an orthogonal basis for the ring of symmetric
polynomials with coefficients in $\mathbb{C}(q,t)$ with respect to a certain density function.  The
existence of such polynomials was proved by exhibiting particular difference operators which have
these polynomials as their eigenfunctions.  Macdonald polynomials contain many important families as
particular degenerations of the parameters $q$ and $t$.  In particular, the ubiquitous Schur functions are obtained
by setting $q=t$; crucially, these are characters of irreducible representations of $GL_{n}$.
Hall-Littlewood polynomials are recovered in the limit $q=0$, and these have interpretations
as zonal spherical functions on $p$-adic groups.  Some other important subfamilies are the monomial,
elementary, and power sum symmetric functions.  

Given the various connections to representation theory, one might ask whether Macdonald polynomials
arise as characters of certain irreducible representations.  Etingof and Kirillov discovered such a
realization in \cite{EK}, where they demonstrate that Macdonald polynomials are ratios of
vector-valued characters of representations of the quantum group $U_q(\mathfrak{gl}_n)$.  Recall
that the finite-dimensional, irreducible representations $V_\lambda$ of $U_g(\mathfrak{gl}_n)$ are
indexed by $\lambda \in \mathcal P_+^{(n)} = \{(\lambda_1, \dotsc,\lambda_n): \lambda_{i} -
\lambda_{i+1} \in \Z_+\}$.  Note that elements of $\mathcal P_+^{(n)}$ can be written as
$(a,a,\dotsc,a) +
\widetilde \lambda$, where $a \in \C$ and $\widetilde \lambda$ is a standard partition of length
$n$.  Let $k \in \N$ be fixed, then they show the existence of an intertwining operator (unique up
to scaling):

\begin{equation}\label{eq:intertwiner}
  \phi^{(k)}_{\lambda}: V_{\lambda + (k-1)\rho} \rightarrow V_{\lambda + (k-1)\rho} \otimes U,
\end{equation}
where $U \simeq V_{(k-1)\cdot(n-1,-1,\dotsc,-1)}$ and $\rho =
\bigl(\frac{n-1}{2},\frac{n-3}{2},\dotsc,\frac{1-n}{2}\bigr)$.  Note that $U$ has the special property that all weight subspaces are one-dimensional.  Fix the normalization of
$\phi_{\lambda}^{(k)}$ so that $v_{\lambda + (k-1)\rho} \rightarrow v_{\lambda + (k-1)\rho} \otimes u_{0}
+ \cdots$, where $v_{\lambda + (k-1)\rho}$ is a fixed non-zero highest weight vector for $V_{\lambda
+ (k-1)\rho}$ and $u_{0}$ is a fixed non-zero vector in the (one-dimensional) weight zero subspace
of $U$.  Consider the corresponding trace function of this operator: 

\begin{equation*}
  \Phi_{\lambda}^{(k)}(x_1,\dotsc,x_n) = \Tr(\phi^{(k)}_{\lambda} \cdot x^{h}) \in
  \C(q)[x_1,\dotsc,x_n],
\end{equation*}
where $q^h = q^{(h_1 + \dotsb + h_n)} \in U_q(\mathfrak{gl}_n)$.  Then Etingof and Kirillov proved the following intimate connection between these trace functions and Macdonald polynomials:

 \begin{theorem} \label{EKfin} \cite{EK}
    The Macdonald polynomial $P_{\lambda}(x; q^{2}, q^{2k})$ is given by the ratio 
     \begin{equation*}
       P_\lambda(x; q^2, q^{2k}) = \frac{\Phi_{\lambda}^{(k)}(x)}{\Phi_{0}^{(k)}(x)}.
     \end{equation*}
   Moreover, there are formal power series $\widetilde{\Phi}_\lambda(x; q, t) \in
     \C(q,t)[[x_1,\dotsc,x_n]]$ such that
     \begin{equation*}
     P_\lambda(x; q^2, t^2) = \frac{\widetilde{\Phi}_{\lambda}(x; q, t)}{\widetilde{\Phi}_{0}(x;
     q, t)},
   \end{equation*}
   and
   \begin{equation*}
     \Phi_{\lambda}^{(k)}(x_1,\dotsc,x_n) = \widetilde{\Phi}_{\lambda}(x_1,\dotsc,x_n; q, q^k).
   \end{equation*}
 \end{theorem}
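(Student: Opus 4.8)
The plan is to identify the ratio $\Phi_\lambda^{(k)}/\Phi_0^{(k)}$ with $P_\lambda(x;q^2,q^{2k})$ by verifying the three properties that characterize a Macdonald polynomial: invariance under $S_n$, triangularity of the form $m_\lambda+\sum_{\mu<\lambda}(\ast)\,m_\mu$ in dominance order, and being an eigenfunction of Macdonald's $q$-difference operator $D_n(q^2,q^{2k})$ with the appropriate eigenvalue. Since for $q$ not a root of unity the relevant Macdonald eigenvalues are pairwise distinct, the uniqueness in that characterization will then give the first formula, and the two remaining assertions will follow by tracking the dependence on $k$ through the single parameter $t=q^k$.

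First I would record the basic structure of the trace functions. From the weight grading of $V_{\lambda+(k-1)\rho}$ one sees that $\Phi_\lambda^{(k)}=\sum_{\mu\preceq\lambda+(k-1)\rho}c_\mu\,x^\mu$, with $c_{\lambda+(k-1)\rho}=1$ because the chosen normalization gives $v_{\lambda+(k-1)\rho}\mapsto v_{\lambda+(k-1)\rho}\otimes u_0+\cdots$ and $\mathrm{wt}(u_0)=0$; in particular $\Phi_\lambda^{(k)}\neq 0$. For the $S_n$-invariance I would invoke the standard fact that for $w\in S_n$ the braid-group (or $R$-matrix) operator on $V_{\lambda+(k-1)\rho}$ commutes with $\phi_\lambda^{(k)}$ up to the torus substitution $x\mapsto wx$, so that cyclicity of the trace yields $\Phi_\lambda^{(k)}(wx)=\Phi_\lambda^{(k)}(x)$. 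Applied to $\lambda=0$ this shows $\Phi_0^{(k)}$ is $S_n$-invariant with leading monomial $x^{(k-1)\rho}$, hence leading-term invertible, so that $\Phi_\lambda^{(k)}/\Phi_0^{(k)}$ is a well-defined $S_n$-symmetric polynomial of the form $m_\lambda+\sum_{\mu<\lambda}(\ast)\,m_\mu$.

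The heart of the matter is the eigenequation, extracted from the center $Z(U_q(\mathfrak{gl}_n))$. For a central element $C$ I would evaluate $\Tr_{V_{\lambda+(k-1)\rho}}\!\bigl(\Delta(C)\,\phi_\lambda^{(k)}\,x^h\bigr)$, the trace taken over the first tensor factor of $V_{\lambda+(k-1)\rho}\otimes U$, in two ways. Since $C$ is central the intertwiner satisfies $\Delta(C)\circ\phi_\lambda^{(k)}=\chi_{\lambda+(k-1)\rho}(C)\,\phi_\lambda^{(k)}$ with $\chi$ the central character, so the trace equals $\chi_{\lambda+(k-1)\rho}(C)\,\Phi_\lambda^{(k)}(x)$. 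On the other hand, writing $\Delta(C)=\sum C_{(1)}\otimes C_{(2)}$ and using the standard radial-part computation for traces of intertwiners — which exploits cyclicity of the trace to move $C_{(1)}$ past $x^h$ (producing $q$-shifts $x_i\mapsto q^{\ast}x_i$) together with the fact that $U\cong V_{(k-1)\cdot(n-1,-1,\dots,-1)}$ is multiplicity-free, so that the $C_{(2)}$-action on weight vectors of $U$ is governed by functions of weights — one reorganizes the whole expression as a $q$-difference operator $\mathcal D_C$ in the variables $x_i$ applied to $\Phi_\lambda^{(k)}$, so that $\mathcal D_C\,\Phi_\lambda^{(k)}=\chi_{\lambda+(k-1)\rho}(C)\,\Phi_\lambda^{(k)}$. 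Conjugating $\mathcal D_C$ by $\Phi_0^{(k)}$ gives an operator $\widehat{\mathcal D}_C$ on symmetric polynomials with $\widehat{\mathcal D}_C(\Phi_\lambda^{(k)}/\Phi_0^{(k)})=\chi_{\lambda+(k-1)\rho}(C)\,(\Phi_\lambda^{(k)}/\Phi_0^{(k)})$. It then suffices to take $C$ the quantum Casimir and to check, by an explicit computation of the $C_{(2)}$-action on $U$, that $\widehat{\mathcal D}_C$ agrees up to an inessential affine renormalization with $D_n(q^2,q^{2k})$ and that $\chi_{\lambda+(k-1)\rho}(C)$ is its eigenvalue on $P_\lambda(x;q^2,q^{2k})$; combined with the symmetry and triangularity established above, uniqueness gives $P_\lambda(x;q^2,q^{2k})=\Phi_\lambda^{(k)}(x)/\Phi_0^{(k)}(x)$.

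Finally, for the formal power series statement I would observe that $k$ enters the whole construction only through $t=q^k$, and rationally: the coefficients of the radial parts $\mathcal D_C$ and the central characters $\chi_{\lambda+(k-1)\rho}(C)$ are rational functions of $q$ and $q^k$, and these $q$-difference equations together with the leading-term normalization determine $\Phi_\lambda^{(k)}$ uniquely. Replacing $q^k$ by a formal variable $t$ and solving order-by-order in $x$ produces $\widetilde\Phi_\lambda(x;q,t)\in\C(q,t)[[x_1,\dots,x_n]]$ — equivalently, one takes $\widetilde\Phi_0$ to be the $t$-deformed denominator (an infinite product in the $x_j/x_i$ that telescopes to $\Phi_0^{(k)}$ at $t=q^k$) and sets $\widetilde\Phi_\lambda=\widetilde\Phi_0\cdot P_\lambda(x;q^2,t^2)$ — which by construction satisfies $\widetilde\Phi_\lambda(x;q,q^k)=\Phi_\lambda^{(k)}(x)$ and $P_\lambda(x;q^2,t^2)=\widetilde\Phi_\lambda(x;q,t)/\widetilde\Phi_0(x;q,t)$. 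I expect the main obstacle to be the explicit identification of the radial part of the quantum Casimir with $D_n$, and of its spectrum with the Macdonald eigenvalues, which forces careful bookkeeping of weights in the coproduct and the $R$-matrix and of the effect of the normalization of $\phi_\lambda^{(k)}$; a secondary subtlety is checking that the $k$-dependence is genuinely rational in $q^k$, which is what legitimizes the passage to a formal parameter $t$.
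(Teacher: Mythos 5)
The paper itself gives no proof of this theorem: it is quoted from Etingof--Kirillov \cite{EK}, and your sketch follows essentially the same route as their original argument --- $S_n$-symmetry and triangular leading behavior of the trace functions, $q$-difference equations for $\Phi_{\lambda}^{(k)}/\Phi_{0}^{(k)}$ extracted from central elements via a radial-part computation (using that $U$ has one-dimensional weight spaces), identification with $P_\lambda(x;q^2,q^{2k})$ by uniqueness of eigenfunctions with distinct eigenvalues, and passage to algebraically independent $t$ by treating $q^k$ as a formal parameter (realized representation-theoretically in \cite{EK} and in Section \ref{sec:verma} via Verma modules). One caution: polynomiality of the ratio and its expansion as $m_\lambda+\sum_{\mu<\lambda}(\ast)\,m_\mu$ does not follow from leading-term invertibility of $\Phi_0^{(k)}$ alone as you assert --- it should be run at the level of formal series triangular with top term $x^{\lambda}$ and then deduced a posteriori once the difference equations identify the ratio with $P_\lambda$ --- but this is a repairable imprecision within the same strategy rather than a missing idea.
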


The two-parameter family $\widetilde{\Phi}_\lambda(x;q,t)$ can be realized as
the trace function of an intertwining operator analogous to
\eqref{eq:intertwiner}, where $V_{\lambda + (k-1)\rho}$ and $U$ are replaced by
suitable infinite-dimensional, irreducible representations of $\C(t)
\otimes U_q(\mathfrak{gl}_n)$ (see \cite{EK} or Section \ref{sec:verma}
for details).

In this paper we consider the expansion of the trace function $\Phi^{(k)}(x)$ with respect to the
Gelfand-Tsetlin basis of $V_{\lambda + (k-1)\rho}$.  We give explicit combinatorial formulas for the
diagonal coefficients of the intertwining operator $\phi^{(k)}$ with respect to this basis, as sums
of products of well-known rational functions appearing in symmetric function theory, specialized to
$t=q^k$.  Inspired by Kashiwara's theory of crystal bases \cite{Kash}, we consider the $q \to
0$ limit of these coefficients (with algebraically independent $t$).  We find that for $t = p^{-1}$
with $p$ an odd prime, this limit is proportional to a count of certain chains of groups of $p$-adic
type.  While it is well-known that Hall-Littlewood polynomials are intimately related to $p$-adic
representation theory, it is somewhat surprising to find $p$-adic quantities arising in the context
of quantum groups.

We will now state our results more precisely.  First recall that a Gelfand-Tsetlin pattern of shape
$\lambda \in \mathcal P_+^{(n)}$ is a sequence $\lambda = \lambda^{(0)}\succeq \lambda^{(1)} \succeq
\dotsb \succeq \lambda^{(n-1)}$, where $\lambda^{(i)} \in \mathcal P_+^{(n-i)}$ and $\succeq$
denotes the interlacing relation: $\lambda^{(i+1)}_{j} - \lambda^{(i)}_{j+1} \in \Z_+, \;
\lambda^{(i)}_{j+1} - \lambda^{(i+1)}_{j+1} \in \Z_+$.  This may be visualized as an array consisting of $n$ rows with the parts of $\lambda$ in the first row, parts of $\lambda^{(1)}$ in the second row, etc.  There is a canonical basis of $V_\lambda$
which is indexed by $GT(\lambda)$, the Gelfand-Tsetlin patterns of shape $\lambda$.

Our aim is to compute the expansion of the trace function $\Phi^{(k)}$ in the Gelfand-Tsetlin basis
for $V_{\lambda + (k-1)\rho_n}$.  We would like to index these patterns in a uniform way with
respect to the parameter $k \in \N$.  Conveniently, there is a canonical way of doing this for the
Gelfand-Tsetlin patterns whose coefficient in the expansion of $\Phi^{(k)}$ is non-zero:

\begin{definition}\label{def:gt_index}
  Let $\lambda \in \mathcal P^{(n)}_+$, and let $\lambda = \mu^{(0)} \supset \mu^{(1)} \supset \dotsb
\supset \mu^{(n-1)}$ be such that:
\begin{enumerate}
  \item $\mu^{(i)} \in \mathcal P^{(n-i)}_+$. 
  \item $\mu^{(i)}_j - \mu^{(i+1)}_{j} \in \Z_+$, $1 \leq j \leq n-i-1$.
  \item $\mu^{(i)}_j - \mu^{(i+1)}_{j-1} \leq k-1$, $2 \leq j \leq n-i$.
\end{enumerate}
    
Define $\overline{\mu}^{(i)} = \mu^{(i)} + (k-1)\rho_{n-i} + (k-1)\cdot(i/2,\dotsc,i/2)$, then
$(\overline{\mu}^{(0)} \succeq \overline{\mu}^{(1)} \succeq \dotsb \succeq
\overline{\mu}^{(n-1)})$ is a Gelfand-Tsetlin pattern of shape $\lambda + (k-1)\rho_n$.
\end{definition}

We can now give our formula for the coefficients in the expansion of $\Phi^{(k)}_\lambda(x)$ in the
Gelfand-Tsetlin basis, along with a new branching formula for the functions $\widetilde{\Phi}(x;q,t)$.  This will be expressed in terms of functions $\psi_{\gamma/\delta}, \Omega_{\gamma/\delta}$, which are plethystic substitutions of skew Macdonald polynomials, and $d_{\alpha}$, which is the norm with respect to a particular inner product, see the Background section for more details.
\begin{theorem} \label{qtthm}
  For $\mu \subset \lambda$ with $\lambda \in \mathcal P_+^{(n)}$, $\mu \in \mathcal P_+^{(n-1)}$,
  define
  \begin{equation} \label{eq:branchcoeff}
 c_{\lambda, \mu}(q,t) = d_{\mu}(q^{2}, t^{2}) \sum_{\substack{\beta \in \mathcal P_+^{(n-1)} \\
 \mu \subseteq \beta \preceq \lambda}} \frac{\psi_{\lambda/\beta}(q^{2},
 t^{2})}{d_{\beta}(q^{2}, t^{2})} \Omega_{\beta/\mu}(q^{2}, t^{2}).
 \end{equation}
Then the trace functions $\widetilde{\Phi}(x;q,t)$ satisfy the branching rule:
\begin{equation*}
  \widetilde\Phi_{\lambda}^{(n)}(x;q,t) = \sum_{\substack{\mu \in \mathcal P_+^{(n+1)} \\ \mu
\subset \lambda}} c_{\lambda, \mu}(q,t) \cdot
 \widetilde{\Phi}_{\mu}^{(n-1)}(x;q,t) \cdot x_{n}^{\rho(\lambda,
 \mu)}, 
\end{equation*}
where $\displaystyle \rho(\lambda, \mu) = \sum_{i=1}^{n-1} \lambda_{i} - \mu_{i}$.

Moreover, with respect to the Gelfand-Tsetlin basis of $V_{\lambda + (k-1)\rho_n}$, the diagonal
coefficient of the intertwining operator $\phi^{(k)}$ corresponding to the Gelfand-Tsetlin pattern
$\Lambda = (\lambda^{(0)} \succeq \dotsb \succeq \lambda^{(n-1)}) \in GT(\lambda + (k-1)\rho_n)$ is equal to  
 \begin{equation*}
  c_{\Lambda}(q, q^{k}) = \begin{cases}
    \displaystyle\prod_{1 \leq i \leq n-1}c_{\overline{\mu}^{(i-1)},
    \overline{\mu}^{(i)}}(q,q^{k}), & \exists\; (\mu^{(0)} \supseteq \dotsb
    \supseteq \mu^{(n-1)}) \text{ s.t. } \lambda^{(i)} = \overline{\mu}^{(i)}\\
    0, & \text{otherwise}
  \end{cases}.
 \end{equation*}
\end{theorem}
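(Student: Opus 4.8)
The plan is to establish the branching rule for the two‑parameter characters $\widetilde\Phi(x;q,t)$ first, as an identity in the theory of symmetric functions, and then to obtain the Gelfand--Tsetlin expansion by combining it with a representation‑theoretic analysis of how $\phi^{(k)}$ behaves under restriction along the chain $U_q(\mathfrak{gl}_n)\supset U_q(\mathfrak{gl}_{n-1})\supset\dots\supset U_q(\mathfrak{gl}_1)$.

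For the branching rule I would start from Theorem \ref{EKfin}, which gives $\widetilde\Phi^{(n)}_\lambda(x;q,t)=P_\lambda(x;q^2,t^2)\,\widetilde\Phi^{(n)}_0(x;q,t)$, together with the explicit product formula for $\widetilde\Phi^{(n)}_0$ coming from the Verma‑module realization of Section \ref{sec:verma} (a product of $q^2$‑Pochhammer ratios over pairs $i<j$). The latter factors as $\widetilde\Phi^{(n)}_0(x_1,\dots,x_n;q,t)=\widetilde\Phi^{(n-1)}_0(x_1,\dots,x_{n-1};q,t)\cdot K_n(x;q,t)$ for an explicit one‑step kernel $K_n$ involving only $x_n$ and the reciprocals $x_1^{-1},\dots,x_{n-1}^{-1}$. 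I would then expand $P_\lambda(x_1,\dots,x_n;q^2,t^2)$ by the classical branching (Pieri) rule $P_\lambda(x_1,\dots,x_n)=\sum_{\beta\preceq\lambda}\psi_{\lambda/\beta}(q^2,t^2)\,P_\beta(x_1,\dots,x_{n-1})\,x_n^{|\lambda|-|\beta|}$, expand $K_n$ via the $q$‑binomial/Cauchy identity (which is where the plethystic substitution enters, since the variables $x_i^{-1}$ appear), multiply out, and re‑expand the products $P_\beta(x_1,\dots,x_{n-1})\cdot(\text{one-row Macdonald function in the }x_i^{-1})$ in the basis $\{P_\mu(x_1,\dots,x_{n-1})\}$ using the $(q^2,t^2)$‑inner product. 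Collecting the coefficient of $\widetilde\Phi^{(n-1)}_\mu(x_1,\dots,x_{n-1};q,t)\,x_n^{\rho(\lambda,\mu)}$ (equivalently, of $P_\mu(x_1,\dots,x_{n-1})\widetilde\Phi^{(n-1)}_0\,x_n^{\rho(\lambda,\mu)}$), the intermediate partition $\beta$ becomes the summation variable, the norm factors $d_\mu,d_\beta$ arise because the re‑expansion uses the adjoint of multiplication with respect to this inner product, and the remaining kernel factor is exactly $\Omega_{\beta/\mu}(q^2,t^2)$. This produces \eqref{eq:branchcoeff} and the stated branching rule for $\widetilde\Phi^{(n)}_\lambda$.

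For the Gelfand--Tsetlin expansion, observe first that since off‑diagonal matrix entries do not contribute to a trace and the partial trace of $\phi^{(k)}$ lands in the one‑dimensional weight‑zero subspace $\C u_0\subset U$, we have $\Phi^{(k)}_\lambda(x)=\sum_{\Lambda\in GT(\lambda+(k-1)\rho_n)}c_\Lambda(q,q^k)\,x^{\mathrm{wt}(\Lambda)}$ with $c_\Lambda(q,q^k)$ equal to the coefficient of $v_\Lambda\otimes u_0$ in $\phi^{(k)}(v_\Lambda)$. The crux is then a recursive lemma. Restricting $\phi^{(k)}_\lambda$ to $U_q(\mathfrak{gl}_{n-1})$, both $V_{\lambda+(k-1)\rho_n}$ and $U=U^{(n)}$ decompose multiplicity‑freely; the vector $u_0$ lies in the unique summand $U_{\sigma_0}$ of $U^{(n)}|_{U_q(\mathfrak{gl}_{n-1})}$ possessing a zero‑weight space, this space is one‑dimensional, and $\sigma_0$ is the highest weight of the $\mathfrak{gl}_{n-1}$‑analogue $U^{(n-1)}$; meanwhile $v_\Lambda$ lies in the summand $W\cong V_{\lambda^{(1)}}$. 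The component of $\phi^{(k)}_\lambda|_W$ in $W\otimes U_{\sigma_0}$ is an element of $\operatorname{Hom}_{U_q(\mathfrak{gl}_{n-1})}(V_{\lambda^{(1)}},V_{\lambda^{(1)}}\otimes U^{(n-1)})$, and a tensor‑product multiplicity computation (a Klimyk/character‑formula argument that collapses to a single alternating sum over the Weyl group of weight multiplicities of $U^{(n-1)}$, since the latter has a one‑dimensional zero‑weight space) shows that this $\operatorname{Hom}$‑space is one‑dimensional precisely when $\lambda^{(1)}=\overline\mu^{(1)}$ for a partition $\mu^{(1)}$ --- equivalently, all consecutive gaps of $\lambda^{(1)}$ are at least $k-1$ --- and is zero otherwise. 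In the one‑dimensional case this component is a scalar multiple of the normalized intertwiner $\phi^{(k)}_{\mu^{(1)}}$, and the scalar is identified with $c_{\lambda,\mu^{(1)}}(q,q^k)$ by taking the trace and comparing with the branching rule (using linear independence of the $(n-1)$‑variable trace functions $\Phi^{(k)}_\mu$ over distinct $\mu$). Consequently the diagonal entry of $\phi^{(k)}_\lambda$ at $v_\Lambda$ equals $c_{\lambda,\mu^{(1)}}(q,q^k)$ times the diagonal entry of $\phi^{(k)}_{\mu^{(1)}}$ at $v_{\Lambda'}$ with $\Lambda'=(\lambda^{(1)}\succeq\dots\succeq\lambda^{(n-1)})$, and it vanishes in the zero case. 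Induction on $n$ --- the base case $n=1$ being trivial, as $\phi^{(k)}$ is then the identity --- yields the product formula when $\Lambda=(\overline\mu^{(i)})$ for a chain of partitions satisfying conditions (1)--(3) of Definition \ref{def:gt_index} (which, as one checks, is exactly the condition for $\Lambda$ to have this form) and $0$ otherwise.

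I expect the main obstacle to be the combinatorial bookkeeping in the second paragraph: rearranging the product of the Macdonald branching rule with the one‑step Cauchy kernel into the specific $\beta$‑sum \eqref{eq:branchcoeff} requires fixing the right normalizations for the skew functions $\psi_{\gamma/\delta},\Omega_{\gamma/\delta}$ and the norms $d_\alpha$, keeping careful track of the plethystic substitution (the inverse alphabet $x_i^{-1}$), and matching the monomial $x_n^{\rho(\lambda,\mu)}$ in the normalization of the trace functions used here. A secondary point is the multiplicity lemma in the third paragraph: although the dichotomy between $1$ and $0$ is governed by the single condition that all gaps be at least $k-1$, verifying that each non‑identity Weyl element of $\mathfrak{gl}_{n-1}$ contributes a cancelling or vanishing term requires analyzing the zero‑weight root strings in $U^{(n-1)}$.
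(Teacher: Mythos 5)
Your proposal is correct and follows essentially the same route as the paper: the paper likewise combines the multiplicity-free restriction to $U_q(\mathfrak{gl}_{n-1})$ with Etingof--Kirillov's uniqueness of intertwiners (which you propose to reprove by a character/multiplicity computation rather than cite) to obtain the one-step branching and Gelfand--Tsetlin product structure, and it computes the coefficient by exactly your symmetric-function manipulation --- dividing by $\widetilde{\Phi}_0^{(n-1)}$, expanding the ratio $\widetilde{\Phi}_0^{(n)}/\widetilde{\Phi}_0^{(n-1)}$ via the $q$-binomial theorem, applying the Macdonald branching rule, and using adjointness for the torus inner product together with the $g_r$ generating function to produce the factors $d_{\mu}/d_{\beta}$ and $\Omega_{\beta/\mu}$. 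The only difference is the order of the two halves (the paper first sets up the representation-theoretic expansion with unknown coefficients $a_{\lambda,\mu}$ at $t=q^k$ and then computes them, whereas you derive the symmetric-function branching rule first for general $t$ and then identify the intertwiner scalar by trace comparison and linear independence), which is a reorganization rather than a genuinely different argument.
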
  

Kashiwara's crystal bases \cite{Kash} allow one to interpret finite-dimensional representations
of $U_q(\mathfrak{gl}_n)$ in the ``crystal limit'' $q \to 0$.  Remarkably, there is a rich
combinatorial structure in the crystal limit.  Since the Gelfand-Tsetlin basis yields a crystal
basis for $V_\lambda$, it seems natural to consider the limit as $q \to 0$ of the coefficients in
Theorem \ref{qtthm}.  A priori this limit need not even exist, but in fact we are able to obtain a simple
closed formula, in a factorized form:

\begin{theorem} \label{trestr}
   Let $\mu \subset \lambda$ with $\lambda \in \mathcal P_+^{(n)}$, $\mu \in \mathcal P_+^{(n-1)}$,
   then we have
   \begin{equation} \label{eq:hlcoeffs}
   \lim_{q \to 0} c_{\lambda, \mu}(q,t) = \frac{b_{\mu}(t^{2})}{b_{\lambda}(t^{2})}
 (1-t^{2})sk_{\lambda/\mu}(t^{2}) = t^{2\sum_{j}  \binom{\lambda_{j}'-\mu_{j}'}{2}}
 \prod_{j \geq 1} \binom{\lambda_{j}'-\mu_{j+1}'}{\lambda_{j}'-\lambda_{j+1}'}_{t^{2}}.
 \end{equation}
\end{theorem}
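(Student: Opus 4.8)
The plan is to take the $q\to 0$ limit of the explicit formula \eqref{eq:branchcoeff} term by term, using the known $q\to 0$ behavior of the skew-Macdonald-type ingredients $\psi$, $\Omega$, $d$. First I would recall the classical (Hall--Littlewood) specializations: as $q\to 0$ the functions $d_\alpha(q^2,t^2)$, $\psi_{\lambda/\beta}(q^2,t^2)$, $\Omega_{\beta/\mu}(q^2,t^2)$ degenerate to products of $b_\alpha(t^2)$-type factors and the Hall--Littlewood skew functions $sk_{\lambda/\beta}$; in particular $d_\alpha(0,t^2) = b_\alpha(t^2)^{-1}$ up to normalization, and the combinatorial coefficients $\psi_{\lambda/\beta}$ become the classical $\psi$-coefficients of the Hall--Littlewood branching rule (products of $t$-binomial coefficients, supported on horizontal strips). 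The key structural point is that after these specializations, the sum over $\beta$ with $\mu\subseteq\beta\preceq\lambda$ collapses: the only surviving term should be $\beta=\lambda$ (or some other forced choice), because one of the three factors vanishes unless $\beta$ is extremal in the interlacing range. This collapse is what produces the single closed product on the right-hand side of \eqref{eq:hlcoeffs}.

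Concretely, the steps I would carry out are: (1) substitute $q=0$ into each of $\psi_{\lambda/\beta}$, $\Omega_{\beta/\mu}$, $d_\beta$, $d_\mu$, citing the relevant formulas from the Background section (or Macdonald's book); (2) identify which factor forces the sum over $\beta$ to have a unique nonzero term — I expect $\Omega_{\beta/\mu}(0,t^2)$ to vanish unless $\beta$ is as large as possible subject to $\mu\subseteq\beta$, i.e. $\beta=\lambda$ when that is compatible with $\beta\preceq\lambda$, so that $\psi_{\lambda/\lambda}=1$ and one is left with $d_\mu(0,t^2)\,\Omega_{\lambda/\mu}(0,t^2)/d_\lambda(0,t^2)$; (3) rewrite this in the $b_\mu/b_\lambda$ form, recognizing $\Omega_{\lambda/\mu}(0,t^2)$ (together with the $b$-factors) as $(1-t^2)\,sk_{\lambda/\mu}(t^2)$, which is the first equality; (4) expand $sk_{\lambda/\mu}(t^2)$ via its explicit product formula in terms of conjugate partitions — the Hall--Littlewood (Littlewood--Richardson-free, since $\mu\subset\lambda$ is a single-variable skew shape) expression $\prod_j \binom{\lambda_j'-\mu_j'}{\lambda_j'-\lambda_{j+1}'}_{t^2}$ up to the power of $t^2$ — and collect the prefactors $b_\mu/b_\lambda$ and $1-t^2$ into the binomial indexed by $\lambda_j'-\mu_{j+1}'$, matching the exponent $2\sum_j\binom{\lambda_j'-\mu_j'}{2}$. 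This last repackaging is a finite identity among $t$-binomial coefficients and powers of $t$, provable by induction on the number of columns or by a direct manipulation of $(t^2;t^2)$-Pochhammer symbols.

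The main obstacle I anticipate is step (2)–(3): verifying that the $\beta$-sum genuinely collapses and then correctly matching normalizations. A priori the $q\to 0$ limit need not exist (as the paper itself emphasizes), so I would need to be careful that the potentially divergent pieces of $\psi_{\lambda/\beta}(q^2,t^2)$ and $1/d_\beta(q^2,t^2)$ cancel against zeros of $\Omega_{\beta/\mu}(q^2,t^2)$ before taking $q=0$, rather than afterwards; this may force me to track the leading power of $q$ in each factor over the whole range of $\beta$ and check that the minimum is attained at a unique $\beta$. Once the surviving term is isolated, the remaining work is the bookkeeping identity of step (4), which is routine but requires care in translating between the $\lambda,\mu$ description of the horizontal strip $\lambda/\mu$ and its conjugate-column description; I would handle the $b$-factor cancellation using the standard formula $b_\lambda(t)=\prod_{i\ge 1}\prod_{j=1}^{m_i(\lambda)}(1-t^j)^{-1}$ and the telescoping of consecutive columns.
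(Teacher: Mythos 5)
There is a genuine gap at the heart of your step (2): the sum over $\beta$ in \eqref{eq:branchcoeff} does \emph{not} collapse to a single term as $q\to 0$. At $q=0$ the three ingredients specialize to $\psi_{\lambda/\beta}(t^2)$ (equivalently $\phi_{\lambda/\beta}(t^2)$ after moving the $b$-factors around), $d$'s that become $b$'s, and $\Omega_{\beta/\mu}(0,t^2)=t^{2|\beta/\mu|}\,sk_{\beta/\mu}(t^2)$; none of these vanishes off an ``extremal'' $\beta$. Indeed $sk_{\beta/\mu}(t^2)\neq 0$ for every $\mu\subseteq\beta$ (for $t=p^{-1}$ it counts subgroups of type $\mu$ in a group of type $\beta$), and $\phi_{\lambda/\beta}(t^2)\neq 0$ for every horizontal strip $\lambda/\beta$, i.e.\ for every interlacing $\beta\preceq\lambda$. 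Already for $\lambda=(2,1)$, $\mu=(1)$ both $\beta=(1)$ and $\beta=(2)$ contribute. Note also that your proposed surviving term $\beta=\lambda$ is not even in the index set, since the sum is restricted to $\beta\preceq\lambda$ with $l(\beta)\leq n-1$ while $l(\lambda)=n$. So after the (legitimate, and in fact divergence-free) termwise limit $q\to 0$, you are left with a genuine multi-term sum
\begin{equation*}
\frac{b_{\mu}(t^{2})}{b_{\lambda}(t^{2})}\sum_{\substack{\beta \preceq \lambda \\ l(\beta)\leq n-1}} \phi_{\lambda/\beta}(t^{2})\, t^{2|\beta/\mu|}\, sk_{\beta/\mu}(t^{2}),
\end{equation*}
and the missing idea is how to evaluate it in closed form.

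The paper does this by a summation identity rather than a collapse: Rains' $q$-Pfaff--Saalsch\"utz formula (Theorem \ref{q_pfaff}) with $a=q$, $b=qt$, $c=q^{2}$, followed by $q\to 0$, gives $\sum_{\beta\preceq\lambda}\phi_{\lambda/\beta}(t^{2})t^{2|\beta/\mu|}sk_{\beta/\mu}(t^{2})=sk_{\lambda/\mu}(t^{2})$, and a separate shift argument ($\beta=\beta'+1^{n}$) shows the discarded terms with $l(\beta)=n$ contribute exactly $t^{2}\,sk_{\lambda/\mu}(t^{2})$, which is where the factor $(1-t^{2})$ in \eqref{eq:hlcoeffs} comes from (Proposition \ref{hl_pfaff_saal}). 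Your outline produces the prefactor $(1-t^{2})$ only by fiat in step (3), and without an ingredient like the Pfaff--Saalsch\"utz identity (or an equivalent Pieri/adjointness argument) the evaluation of the $\beta$-sum is unproven. Your step (4), the repackaging of $\frac{b_\mu}{b_\lambda}(1-t^{2})sk_{\lambda/\mu}(t^{2})$ into the single product of $t^{2}$-binomials, is essentially the paper's final computation and is fine, but it rests on the unestablished first equality.
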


\noindent Here the coefficients $sk_{\lambda/\mu}(t)$ are those studied in \cite{W, KL} in the context of Pieri rules.  Note that when $t = p^{-1}$ for an odd prime $p$, 
\begin{equation*}
 sk_{\lambda/\mu}(t) = t^{n(\lambda) - n(\mu)} \alpha_{\lambda}(\mu; p),
 \end{equation*}
where $\alpha_{\lambda}(\mu; p)$ is the number of subgroups of type $\mu$ in a finite abelian $p$-group of type $\lambda$.  For $S = (\mu^{(0)} \supset
    \mu^{(1)} \supset \dotsb \supset \mu^{(n-1)})$ with $\mu^{(i)} \in \mathcal P_+^{(n-i)}$, we define the coefficient
\begin{equation} \label{skScoeff}
sk_{S}(t) = sk_{\mu^{(0)}/\mu^{(1)}}(t) sk_{\mu^{(1)}/\mu^{(2)}}(t) \cdots sk_{\mu^{(n-2)}/\mu^{(n-1)}}(t).   
\end{equation}
Note that when $t = p^{-1}$, $sk_{S}(t)$ is (up to a power of $t$) the number of nested chains of subgroups with types specified by the sequence $S$.  We also let 
\begin{equation*}
wt(S) = \big(\rho(\mu^{(n-1)}, 0), \rho(\mu^{(n-2)}, \mu^{(n-1)}), \cdots, \rho(\mu^{(1)}, \mu^{(2)}), \rho(\mu^{(0)}, \mu^{(1)})\big).
\end{equation*}

\begin{theorem} \label{thm:gt_padic}
  Let $\lambda \in \mathcal P_+^{(n)}$. Then
  \begin{equation} \label{eq:hl_tracefct}
    \lim_{q \to 0} \widetilde{\Phi}_\lambda(x; q, t) = \frac{(1-t^{2})^{n}}{b_{\lambda}(t^{2})} \sum_{\substack{S = (\lambda = \mu^{(0)} \supset
    \mu^{(1)} \supset \dotsb \supset \mu^{(n-1)}) \\ \mu^{(i)} \in \mathcal P_+^{(n-i)}}} sk_{S}(t^{2})x^{wt(S)}.
  \end{equation}
\end{theorem}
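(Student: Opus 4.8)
The plan is to prove \eqref{eq:hl_tracefct} by induction on $n$, using the branching rule of Theorem \ref{qtthm} as the inductive step and the factorized limit \eqref{eq:hlcoeffs} of Theorem \ref{trestr} to control the coefficients as $q\to 0$. The base case is $n=1$: here \eqref{eq:hl_tracefct} asserts
\[
  \lim_{q\to 0}\widetilde{\Phi}^{(1)}_{(m)}(x_1;q,t) = \frac{1-t^{2}}{b_{(m)}(t^{2})}\,x_1^{m},
\]
since the unique chain $S$ is $S=((m))$, with empty product $sk_{S}(t^{2})=1$ and $wt(S)=(m)$. I would verify this directly from the Verma-module description of $\widetilde{\Phi}$ recalled in Section \ref{sec:verma}; this rank-one evaluation, and in particular the value of the normalizing trace $\widetilde{\Phi}^{(1)}_{0}$, is the one ingredient that is not pure bookkeeping, and is where I expect the real work to lie.

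For the inductive step, fix $\lambda\in\mathcal P_+^{(n)}$ and apply Theorem \ref{qtthm}:
\[
  \widetilde{\Phi}^{(n)}_{\lambda}(x_1,\dots,x_n;q,t) = \sum_{\substack{\mu\in\mathcal P_+^{(n-1)}\\ \mu\subset\lambda}} c_{\lambda,\mu}(q,t)\,\widetilde{\Phi}^{(n-1)}_{\mu}(x_1,\dots,x_{n-1};q,t)\,x_n^{\rho(\lambda,\mu)}.
\]
This is a finite sum, and by Theorem \ref{trestr} each $c_{\lambda,\mu}(q,t)$ has a limit as $q\to 0$, so I may pass to the limit coefficient by coefficient. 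Substituting $\lim_{q\to 0}c_{\lambda,\mu}(q,t)=\tfrac{b_{\mu}(t^{2})}{b_{\lambda}(t^{2})}(1-t^{2})\,sk_{\lambda/\mu}(t^{2})$ together with the inductive hypothesis $\lim_{q\to 0}\widetilde{\Phi}^{(n-1)}_{\mu}=\tfrac{(1-t^{2})^{n-1}}{b_{\mu}(t^{2})}\sum_{S'}sk_{S'}(t^{2})x^{wt(S')}$ (the sum over chains $S'=(\mu=\nu^{(0)}\supset\dots\supset\nu^{(n-2)})$, $\nu^{(i)}\in\mathcal P_+^{(n-1-i)}$), the two occurrences of $b_{\mu}(t^{2})$ cancel and one obtains
\[
  \lim_{q\to 0}\widetilde{\Phi}^{(n)}_{\lambda} = \frac{(1-t^{2})^{n}}{b_{\lambda}(t^{2})}\sum_{\mu\subset\lambda}\ \sum_{S'} sk_{\lambda/\mu}(t^{2})\,sk_{S'}(t^{2})\,x^{wt(S')}\,x_n^{\rho(\lambda,\mu)}.
\]

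It then remains to repackage the double sum. A pair $(\mu,S')$ is exactly the datum of a chain $S=(\lambda=\mu^{(0)}\supset\mu^{(1)}\supset\dots\supset\mu^{(n-1)})$ via $\mu^{(1)}=\mu$ and $\mu^{(i+1)}=\nu^{(i)}$; under this identification $sk_{\lambda/\mu}(t^{2})\,sk_{S'}(t^{2})=sk_{S}(t^{2})$ by \eqref{skScoeff}, and comparing $wt(S')$ (on the variables $x_1,\dots,x_{n-1}$) with the definition of $wt(S)$ shows that $wt(S')$ supplies the first $n-1$ coordinates of $wt(S)$ while $\rho(\lambda,\mu)=\rho(\mu^{(0)},\mu^{(1)})$ supplies the last, so that $x^{wt(S')}x_n^{\rho(\lambda,\mu)}=x^{wt(S)}$. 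This reproduces \eqref{eq:hl_tracefct} and closes the induction. (Sanity check on the exponent of $1-t^{2}$: one factor comes from each of the $n-1$ applications of the branching rule through \eqref{eq:hlcoeffs}, and one more from the base case.) Apart from the rank-one input, the argument is just this termwise passage to the limit, the telescoping cancellation of the $b_{\mu}(t^{2})$'s, and a careful matching of skew coefficients and weights along a chain.
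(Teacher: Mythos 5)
Your argument is correct and is essentially the paper's proof: the paper likewise combines the branching rule of Theorem \ref{qtthm} (iterated into the chain/Gelfand--Tsetlin expansion) with the $q\to 0$ limit of Theorem \ref{trestr}, the telescoping of the $b_{\mu}(t^{2})$'s producing the factor $(1-t^{2})^{n}/b_{\lambda}(t^{2})$ exactly as in your inductive step. The only cosmetic difference is that the paper obtains the $n$-th factor of $(1-t^{2})$ from the final branching coefficient (down to the empty partition) rather than from a separately verified rank-one base case, which in any event is immediate rather than ``real work,'' since the $\mathfrak{gl}_1$ module is one-dimensional and $\widetilde{\Phi}^{(1)}_{(m)}(x_1;q,t)=x_1^{m}$ identically.
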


Note that the coefficients for the Gelfand-Tsetlin basis of $V_{\lambda +
(k-1)\rho_n}$ are obtained by specializing $t=q^k$ in \eqref{eq:branchcoeff},
and hence we are only able to obtain the $t=0$ specialization of
\eqref{eq:hlcoeffs} in the crystal limit.  As mentioned above, there is a
representation theoretic realization of $\widetilde \Phi(x; q, t)$, with $t$
algebraically independent from $q$, as the trace function of an intertwiner
between infinite-dimensional modules over $\C(t) \otimes
U_q(\mathfrak{gl}_n)$.  There is an analogue of the Gelfand-Tsetlin basis for
these modules, and we can obtain \eqref{eq:hlcoeffs} for general $t$ as the $q
\to 0$ limit of the coefficients in the expansion of $\widetilde \Phi(x; q, t)$
with respect to this basis (see Section \ref{sec:verma} within the paper for
more details about this).  Unfortunately these modules do not fit into
Kashiwara's framework, and so we have not been able to find a direct
interpretation of \eqref{eq:hlcoeffs} in terms of crystal bases.  However, the
simple combinatorial structure of our formula in the limit $q \to 0$ does seem
to suggest a possible connection, and we leave it as an open question to
describe this connection more precisely.

\medskip
\noindent\textbf{Acknowledgements.} The author would like to thank Pavel Etingof for suggesting this work, and for many helpful discussions and comments.  She would also like to thank Eric Rains and Ole Warnaar for helpful comments.

\section{Background on symmetric function theory}

Recall that $\lambda = (\lambda_{1}, \dots, \lambda_{n}) \in (\mathbb{Z}_{+})^{n}$ is a partition if $\lambda_{i} \geq \lambda_{i+1}$.  There is a partial order on partitions defined by $\lambda > \mu$ if and only if $\sum \lambda_{i} = \sum \mu_{i}$ and for some $k<n$ we have $\lambda_{i} = \mu_{i}$ for all $i \leq k$ and $\lambda_{k+1} > \mu_{k+1}$.  We will work with polynomials of $n$ variables, i.e., over $\mathbb{C}[x_{1}, \dots, x_{n}]$.  For $\lambda \in \mathbb{Z}^{n}$, we let $x^{\lambda} = x_{1}^{\lambda_{1}} \cdots x_{n}^{\lambda_{n}}$.

We fix $k \in \mathbb{N}$, and set $t = q^{k}$.  Let $\rho = (\frac{n-1}{2}, \frac{n-3}{2}, \dots, \frac{1-n}{2})$ be half the sum of the positive roots; we will also write $\rho_{n}$ when it is not clear from context.  Note that 
\begin{multline} \label{rhoeq}
\rho_{n} - \rho_{n-1} = \Big(\frac{n-1}{2}, \frac{n-3}{2}, \dots, \frac{1-n}{2}\Big) - \Big(\frac{n-2}{2}, \frac{n-4}{2}, \dots, \frac{2-n}{2}\Big) 
\\ = \Big( \frac{1}{2}, \dots, \frac{1}{2}, \frac{1-n}{2} \Big).
\end{multline}

 We now define a number of different coefficients arising from symmetric function theory, see \cite{Mac}; we also review some relevant results from the literature.
 
 \begin{definition} \label{gfcn}
Define the functions $g(\gamma; q^{2}, t^{2})$ for $\gamma$ a partition by
\begin{equation*}
g(\gamma; q^{2},t^{2}) = t^{2|\gamma|} \frac{(t^{-2}q^{2};q^{2})_{\gamma_{1}} \cdots (t^{-2}q^{2};q^{2})_{\gamma_{n-1}}}{(q^{2};q^{2})_{\gamma_{1}} \cdots (q^{2};q^{2})_{\gamma_{n-1}}}.
\end{equation*}
\end{definition}

 \begin{definition} \label{mPieri}
 Let $c^{\delta}_{\gamma \mu}(q,t)$ be the coefficients in the following Pieri rule:
 \begin{equation*}
 m_{\gamma}^{(n)}(x) P_{\mu}^{(n)}(x;q,t) = \sum_{\delta} c_{\gamma \mu}^{\delta}(q,t) P_{\delta}^{(n)}(x;q,t).
 \end{equation*}
 \end{definition}
 We note that the coefficients $c_{\gamma \mu}^{\delta}$ can be determined via the change of basis coefficients $\{m_{\gamma}^{(n)}(x)\} \rightarrow \{P_{\eta}^{(n)}(x;q,t)\}$ in conjunction with the Pieri coefficients that express the product $P_{\eta}^{(n)}(x;q,t)P_{\mu}^{(n)}(x;q,t)$ in the Macdonald polynomial basis.

 \begin{definition}
 Let $\Omega_{\beta/\mu}(q^{2},q^{2k})$ be the coefficient on $P_{\beta}^{(n-1)}(x;q^{2}, q^{2k})$ in the expansion of
 \begin{equation*}
 P_{\mu}^{(n-1)}(x;q^{2}, q^{2k}) \prod_{i  = 1}^{n-1} \frac{(q^{2}x_{i};q^{2})_{\infty}}{(q^{2k}x_{i};q^{2})_{\infty}}
 \end{equation*} 
 in the basis $\{ P_{\beta}^{(n-1)}(x;q^{2}, q^{2k}) \}_{\beta}$.  
 \end{definition}

  We now recall the branching rule for Macdonald polynomials.
 \begin{theorem} \label{MDbr}
 \begin{equation*}
 P_{\lambda}^{(n)}(x;q,t) = \sum_{\mu \preceq \lambda} x_{n}^{|\lambda - \mu|} \psi_{\lambda/\mu}(q,t) P_{\mu}^{(n-1)}(x;q,t)
 \end{equation*}
  \end{theorem}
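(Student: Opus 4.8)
This is Macdonald's branching rule \cite{Mac}, and the plan is to obtain it from the theory of skew Macdonald functions together with a single-variable specialization. First I would pass to the ring $\Lambda$ of symmetric functions in infinitely many variables over $\C(q,t)$, equipped with the Macdonald scalar product $\langle\,\cdot\,,\,\cdot\,\rangle_{q,t}$ and the basis $\{Q_\lambda\}$ dual to $\{P_\lambda\}$, and recall the skew Macdonald functions $P_{\lambda/\mu}(x;q,t)$ defined either by the coproduct identity
\begin{equation*}
  P_\lambda(x,y;q,t) = \sum_{\mu} P_{\lambda/\mu}(x;q,t)\, P_\mu(y;q,t) \qquad \text{in two alphabets } x, y,
\end{equation*}
or equivalently by $\langle P_{\lambda/\mu}, Q_\nu\rangle_{q,t} = \langle P_\lambda, Q_\mu Q_\nu\rangle_{q,t}$ for all $\nu$. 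Two standard facts to record at the start: $P_{\lambda/\mu}$ is homogeneous of degree $|\lambda|-|\mu|$, and $P_{\lambda/\mu}=0$ unless $\mu\subseteq\lambda$.

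Next I would specialize the first alphabet $x$ to the single variable $x_n$ and the second alphabet $y$ to $(x_1,\dots,x_{n-1})$. A symmetric function in the single variable $x_n$ that is homogeneous of degree $d$ must be a scalar multiple of $x_n^{d}$, so necessarily $P_{\lambda/\mu}(x_n;q,t) = \psi_{\lambda/\mu}(q,t)\, x_n^{|\lambda|-|\mu|}$ for some scalar $\psi_{\lambda/\mu}(q,t)\in\C(q,t)$, and the coproduct identity becomes
\begin{equation*}
  P_\lambda^{(n)}(x_1,\dots,x_n;q,t) = \sum_{\mu} \psi_{\lambda/\mu}(q,t)\, x_n^{|\lambda|-|\mu|}\, P_\mu^{(n-1)}(x_1,\dots,x_{n-1};q,t),
\end{equation*}
the terms with $\ell(\mu)\geq n$ dropping out because $P_\mu^{(n-1)}=0$ there. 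At this stage the formula already has the claimed shape; what remains is to determine for which $\mu$ one has $\psi_{\lambda/\mu}\neq 0$ and to evaluate $\psi_{\lambda/\mu}$.

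For this I would expand $P_{\lambda/\mu}=\sum_\nu \langle P_{\lambda/\mu}, Q_\nu\rangle_{q,t}\, P_\nu$ in the $P$-basis and specialize to one variable: since $P_\nu(x_n)=0$ unless $\nu=(r)$ is a single row, in which case $P_{(r)}(x_n)=x_n^r$, one gets $P_{\lambda/\mu}(x_n) = \sum_{r\geq 0}\langle P_\lambda, Q_\mu Q_{(r)}\rangle_{q,t}\, x_n^r$, and hence $\psi_{\lambda/\mu}(q,t)=\langle P_\lambda, Q_\mu Q_{(r)}\rangle_{q,t}$ with $r=|\lambda|-|\mu|$; equivalently, $\psi_{\lambda/\mu}$ is the coefficient of $Q_\lambda$ in the product $Q_\mu Q_{(r)}$. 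I would then invoke the Pieri rule: in the expansion of $Q_\mu Q_{(r)}$ in the $Q$-basis, only those $Q_\lambda$ with $\lambda/\mu$ a horizontal $r$-strip occur, and the corresponding coefficient has an explicit product form in terms of arm- and leg-lengths. This shows $\psi_{\lambda/\mu}=0$ unless $\lambda/\mu$ is a horizontal strip and gives the closed form of $\psi_{\lambda/\mu}$. Finally, since $\ell(\lambda)\leq n$ and $\ell(\mu)\leq n-1$, the condition ``$\lambda/\mu$ is a horizontal strip'' is precisely the interlacing relation $\mu\preceq\lambda$ appearing in the statement, completing the identification.

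I expect the main obstacle to be the Pieri rule used in the last step — both that its support is exactly the horizontal strips and the closed-form product for its coefficients. This part is not formal; it is usually established by a separate argument, e.g.\ via the Macdonald difference operators or a $(q,t)$-deformation of the classical one-variable vanishing argument for skew Schur functions, and it is where the genuine work lies. Everything else is bookkeeping, and a convenient consistency check along the way is the stability property $P_\lambda^{(n)}(x_1,\dots,x_{n-1},0;q,t)=P_\lambda^{(n-1)}(x_1,\dots,x_{n-1};q,t)$ for $\ell(\lambda)\leq n-1$, which in particular forces the diagonal coefficient $\psi_{\lambda/\lambda}=1$.
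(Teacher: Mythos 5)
Your plan is correct: defining the skew functions $P_{\lambda/\mu}$ via the coproduct, specializing one alphabet to the single variable $x_n$ so that $P_{\lambda/\mu}(x_n)=\psi_{\lambda/\mu}x_n^{|\lambda|-|\mu|}$, identifying $\psi_{\lambda/\mu}$ with a Pieri coefficient of $Q_\mu Q_{(r)}$, and noting that the horizontal-strip support condition is exactly the interlacing $\mu\preceq\lambda$ is the standard derivation, and you rightly locate the genuine work in the Pieri rule itself. The paper offers no argument of its own here --- it simply cites (1.7) of \cite{LW} (equivalently Macdonald's book, Ch.\ VI) --- and your sketch is precisely the proof underlying those references, so there is nothing to reconcile.
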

  \begin{proof}
  See (1.7) of \cite{LW} for example.
  \end{proof}
 
 \begin{remark}
  There is a product formula for the coefficients $\psi_{\lambda/\mu}(q,t)$ appearing above (\cite{Mac} p 342)
  \begin{equation*}
  \psi_{\lambda/\mu}(q,t) = \prod_{1 \leq i \leq j \leq l(\mu)} \frac{f(q^{\mu_{i}-\mu_{j}}t^{j-i})f(q^{\lambda_{i}-\lambda_{j+1}}t^{j-i})}{f(q^{\lambda_{i}-\mu_{j}}t^{j-i})f(q^{\mu_{i}-\lambda_{j+1}}t^{j-i})},
  \end{equation*}
 where $f(a) = (at)_{\infty}/(aq)_{\infty}$ with $(a)_{\infty} = \prod_{i \geq 0} (1-aq^{i})$.  
 \end{remark}
 
 \begin{proposition}
 We have
 \begin{equation*}
 \lim_{q \rightarrow 0} \psi_{\lambda/\mu}(q,t) = \prod_{\substack{\{j: \lambda_{j}' = \mu_{j}'\\\text{and } \lambda_{j+1}' = \mu_{j+1}' +1 \}}} (1-t^{m_{j}(\mu)})
 \end{equation*}
 if $\lambda/\mu$ is a horizontal strip, and zero otherwise.
 \end{proposition}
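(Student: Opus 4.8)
By convention $\psi_{\lambda/\mu}(q,t)=0$ unless $\lambda/\mu$ is a horizontal strip (it occurs in the branching rule of Theorem~\ref{MDbr} only for $\mu\preceq\lambda$), so it suffices to treat $\lambda_1\geq\mu_1\geq\lambda_2\geq\mu_2\geq\cdots$. The plan is to compute the $q\to 0$ limit of Macdonald's product formula for $\psi_{\lambda/\mu}(q,t)$ recalled above, factor by factor, and then collapse the result. First note that for $1\leq i\leq j\leq l(\mu)$ the interlacing hypothesis yields the chain $\lambda_i\geq\mu_i\geq\mu_j\geq\lambda_{j+1}$, so each of the $q$-exponents $\mu_i-\mu_j$, $\lambda_i-\lambda_{j+1}$, $\lambda_i-\mu_j$, $\mu_i-\lambda_{j+1}$ occurring in the formula is a non-negative integer, $\mu_i-\mu_j$ being the smallest and $\lambda_i-\lambda_{j+1}$ the largest.

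The analytic input is the elementary limit, for integers $m\geq 0$ and $r\geq 0$,
\begin{equation*}
\lim_{q\to 0}f(q^m t^r)=\lim_{q\to 0}\frac{(q^m t^{r+1};q)_\infty}{(q^{m+1}t^r;q)_\infty}=\begin{cases}1-t^{r+1}, & m=0,\\ 1, & m\geq 1,\end{cases}
\end{equation*}
obtained by letting each factor of these (uniformly convergent for $q$ near $0$) infinite products tend to its limit; in particular no denominator factor of $\psi_{\lambda/\mu}$ degenerates, so the limit exists. Applied to the pair $(i,j)$: if $\mu_i>\mu_j$, all four factors tend to $1$; if $\mu_i=\mu_j$ but $\lambda_i>\lambda_{j+1}$, the pair contributes $(1-t^{j-i+1})^{1-[\lambda_i=\mu_i]-[\mu_j=\lambda_{j+1}]}$; and if $\lambda_i=\lambda_{j+1}$, which forces $\lambda_i=\mu_i$ and $\mu_j=\lambda_{j+1}$, numerator and denominator each give $(1-t^{j-i+1})^2$ and the pair contributes $1$. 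Because $\mu_i=\mu_j$, $\lambda_i=\mu_i$ and $\mu_j=\lambda_{j+1}$ cannot all hold while $\lambda_i>\lambda_{j+1}$, the three cases combine into
\begin{equation*}
\lim_{q\to 0}\psi_{\lambda/\mu}(q,t)=\prod_{\substack{1\leq i\leq j\leq l(\mu)\\ \lambda_i>\mu_i=\mu_j>\lambda_{j+1}}}(1-t^{j-i+1}).
\end{equation*}

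Finally I would evaluate this product one value at a time. Fix $v\geq 1$ and consider pairs $(i,j)$ with $\mu_i=\cdots=\mu_j=v$; here $i,j$ range over the block $\{\mu_{v+1}'+1,\dots,\mu_v'\}$ of positions where $\mu$ equals $v$. The condition $\lambda_i>v$ means $i\leq\lambda_{v+1}'$, so together with $i\geq\mu_{v+1}'+1$ it is satisfiable only if $\lambda_{v+1}'\geq\mu_{v+1}'+1$; the horizontal-strip bound $\lambda_{v+1}'-\mu_{v+1}'\leq 1$ then forces $\lambda_{v+1}'=\mu_{v+1}'+1$ and pins $i=\mu_{v+1}'+1$. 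Symmetrically $\lambda_{j+1}<v$ means $j\geq\lambda_v'$, which with $j\leq\mu_v'$ and $\lambda_v'\geq\mu_v'$ forces $\lambda_v'=\mu_v'$ and $j=\mu_v'$. Thus for each $v$ there is at most one contributing pair, $(\mu_{v+1}'+1,\mu_v')$, present exactly when $\lambda_v'=\mu_v'$ and $\lambda_{v+1}'=\mu_{v+1}'+1$, and contributing $1-t^{\mu_v'-\mu_{v+1}'}=1-t^{m_v(\mu)}$. The product over $v$ is precisely $\prod_j(1-t^{m_j(\mu)})$ over the stated index set.

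The routine-but-delicate part is the middle paragraph: after passing to the limit one has to pair up the surviving factors correctly --- the denominator factors $f(q^{\lambda_i-\mu_j}\cdots)$ and $f(q^{\mu_i-\lambda_{j+1}}\cdots)$ are exactly what cancels the extra numerator contributions that appear when $i$ is at the top, or $j$ at the bottom, of a constant-value block of $\mu$ --- and then spot the small algebraic fact that merges the three cases into the single product over pairs with $\lambda_i>\mu_i=\mu_j>\lambda_{j+1}$. A cleaner but less self-contained route avoids the computation altogether: it is classical that $P_\lambda^{(n)}(x;q,t)\to P_\lambda^{(n)}(x;0,t)$, the Hall--Littlewood polynomial, as $q\to 0$; feeding this into Theorem~\ref{MDbr} (using that $\lim_{q\to 0}\psi_{\lambda/\mu}$ exists, as above) and comparing with the known branching rule for Hall--Littlewood polynomials, whose coefficient is exactly the right-hand side, identifies the limit, linear independence of the $P_\mu^{(n-1)}(x_1,\dots,x_{n-1};0,t)$ doing the rest.
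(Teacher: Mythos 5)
Your argument is correct, but it takes a genuinely different route from the paper. The paper disposes of this proposition in one line, by identifying the $q\to 0$ limit with the branching coefficient for Hall--Littlewood polynomials and citing Macdonald (Ch.~III, (5.5'), (5.14')) --- which is precisely the ``cleaner but less self-contained route'' you sketch in your last paragraph. Your main argument instead works directly from the product formula for $\psi_{\lambda/\mu}(q,t)$ quoted in the Remark (Macdonald VI, (6.24)(iv)): the factor-by-factor limit $\lim_{q\to 0} f(q^m t^r)=1-t^{r+1}$ for $m=0$ and $=1$ for $m\geq 1$ is right, the three-case analysis for each pair $(i,j)$ (using that $\lambda_i\geq\mu_i\geq\mu_j\geq\lambda_{j+1}$, and that the two denominator degenerations cannot occur simultaneously when $\lambda_i>\lambda_{j+1}$) correctly yields the intermediate product over pairs with $\lambda_i>\mu_i=\mu_j>\lambda_{j+1}$, and the collapse showing that each column value $v$ contributes at most the single pair $(\mu_{v+1}'+1,\mu_v')$, present exactly when $\lambda_v'=\mu_v'$ and $\lambda_{v+1}'=\mu_{v+1}'+1$, with factor $1-t^{m_v(\mu)}$, checks out (note that these two conditions automatically force $m_v(\mu)\geq 1$, so no degenerate factor arises). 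What your computation buys is self-containedness: it proves existence of the limit and in effect rederives the Hall--Littlewood branching coefficient from the Macdonald-level product formula, whereas the paper's citation is shorter but leans on the standard specialization $P_\lambda(x;q,t)\to P_\lambda(x;0,t)$ and the known Hall--Littlewood branching rule.
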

 \begin{proof}
 This follows from the branching rule for Hall-Littlewood polynomials (see for example \cite{Mac} p228 (5.5'), (5.14')).
 \end{proof}

 \begin{definition}
 Let
 \begin{equation*}
 \phi_{\lambda/\mu}(t) = \prod_{\substack{\{j: \lambda_{j}' = \mu_{j}'+1\\ \text{and } \lambda_{j+1}' = \mu_{j+1}'  \}}} (1-t^{m_{j}(\lambda)}),
 \end{equation*}
 if $\lambda/\mu$ is a horizontal strip, and zero otherwise.
 \end{definition}
 
 Note that these coefficients are the $q \rightarrow 0$ limiting case of $\phi_{\lambda/\mu}(q,t)$ which also arise as branching coefficients.
 
 \begin{remark}
The functions $\phi_{\lambda/\beta}(q,t), \Omega_{\beta/\bar{\mu}}(q,t)$ have interpretations in terms of skew Macdonald polynomials (in parameters $q,t$) with plethystic substitutions.  In particular, we have $\phi_{\lambda/\beta}(q,t) = Q_{\lambda/\beta}(1)$ and $\Omega_{\beta/\bar{\mu}}(q,t) = Q_{\beta/\bar{\mu}}\big( \frac{t-q}{1-t} \big) = t^{|\beta/\bar{\mu}|} Q_{\beta/\bar{\mu}}\big( \frac{1-q/t}{1-t} \big)$ (see for example \cite{W}), and both these quantities have nice factorized forms.
\end{remark}

 We will write $\psi_{\lambda/\mu}(t)$, $g(\gamma; t^{2})$, etc. to denote the limit $q \rightarrow 0$ of these functions.  
 
 \begin{definition} \cite{AK, KL}
 For any skew shape $\lambda/\mu$, define the coefficients
 \begin{equation*}
 sk_{\lambda/\mu}(t) = t^{\sum_{j} \binom{\lambda_{j}'-\mu_{j}'}{2}} \prod_{j \geq 1} \binom{\lambda_{j}'-\mu_{j+1}'}{m_{j}(\mu)}_{t}.
 \end{equation*}
 \end{definition}

 \begin{theorem} \label{Lauve-K} \cite{AK, KL}
 For a partition $\lambda$ and $r \geq 0$, we have
 \begin{equation*}
 P_{\lambda}^{(n)}(x;t)s_{r}^{(n)}(x) = \sum_{\lambda+} sk_{\lambda+/\lambda}(t) P_{\lambda+}^{(n)}(x;t),
 \end{equation*}
 with the sum over partitions $\lambda \subset \lambda+$ for which $|\lambda+/\lambda| = r$. 
  \end{theorem}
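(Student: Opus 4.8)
The plan is to reduce the identity to the specializations $t = p^{-1}$, $p$ an odd prime, and then to read it off from the Hall algebra of finite abelian $p$-groups. First I would observe that $s_r^{(n)}(x) = h_r^{(n)}(x)$ and that both sides have coefficients in $\Z[t]$: the coefficients $sk_{\lambda+/\lambda}(t)$ on the right are a nonnegative power of $t$ times a product of $t$-binomial coefficients, and on the left, since $P_\mu^{(n)}(x;t) = m_\mu + \sum_{\nu < \mu} u_{\mu\nu}(t)\,m_\nu$ with $u_{\mu\nu}(t) \in \Z[t]$, the re-expansion of $P_\lambda^{(n)}(x;t)\,s_r^{(n)}(x)$ in the basis $\{P_{\lambda+}^{(n)}(x;t)\}$ again has coefficients in $\Z[t]$ (the relevant change of basis and its inverse are both unitriangular over $\Z[t]$). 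Hence it suffices to verify the identity for $t = p^{-1}$, $p$ an odd prime, and, by the usual stabilization, to work with symmetric functions rather than polynomials in $n$ variables; the constraint on the number of variables is harmless because terms with $\ell(\lambda+) > n$ vanish upon specializing to $x_1,\dots,x_n$.

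Fix $t = p^{-1}$ and combine three standard ingredients. (i) The Kostka--Foulkes polynomial for a one-row shape is $K_{(r),\mu}(t) = t^{n(\mu)}$ (see \cite{Mac}), which yields $h_r = \sum_{\mu\vdash r} t^{n(\mu)}\,P_\mu(x;t)$. (ii) Writing $P_\mu(x;t)\,P_\lambda(x;t) = \sum_{\lambda+} f^{\lambda+}_{\mu\lambda}(t)\,P_{\lambda+}(x;t)$, the Hall-algebra interpretation of the structure constants gives $f^{\lambda+}_{\mu\lambda}(t) = t^{\,n(\lambda+)-n(\mu)-n(\lambda)}\,g^{\lambda+}_{\mu\lambda}(p)$, where $g^{\lambda+}_{\mu\lambda}(p)$ is the Hall polynomial counting the submodules $N$ of a finite abelian $p$-group $M$ of type $\lambda+$ with $N$ of type $\lambda$ and $M/N$ of type $\mu$. (iii) The relation recorded in the introduction, $sk_{\lambda+/\lambda}(t) = t^{\,n(\lambda+)-n(\lambda)}\,\alpha_{\lambda+}(\lambda;p)$, which also follows from the product formula for $\alpha_{\lambda+}(\lambda;p)$ by a short manipulation of Gaussian binomial coefficients. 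Combining (i) and (ii), the coefficient of $P_{\lambda+}(x;t)$ in $P_\lambda(x;t)\,s_r(x)$ equals $\sum_{\mu\vdash r} t^{n(\mu)}\,t^{\,n(\lambda+)-n(\mu)-n(\lambda)}\,g^{\lambda+}_{\mu\lambda}(p) = t^{\,n(\lambda+)-n(\lambda)}\sum_{\mu} g^{\lambda+}_{\mu\lambda}(p)$; every contributing $\mu$ satisfies $|\mu| = |\lambda+|-|\lambda| = r$, so the inner sum runs over all cotypes and therefore counts every submodule of $M$ of type $\lambda$, i.e. it equals $\alpha_{\lambda+}(\lambda;p)$ (in particular $\lambda \subseteq \lambda+$ for every nonzero term). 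By (iii) this coefficient equals $sk_{\lambda+/\lambda}(p^{-1})$, as required.

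I expect the main obstacle to be bookkeeping rather than anything conceptual: one must pin down the exponent of $t$ in the structure-constant formula (ii) and check that it is consistent with the normalization of the Hall--Littlewood $P$'s used throughout, which I would confirm on the small case $P_{(1)}(x;t)^2 = P_{(2)}(x;t) + (1+t)\,P_{(1,1)}(x;t)$ before proceeding. If one prefers an argument that does not invoke the Hall algebra, there is an alternative by induction on the number of variables: combining the branching rule $P_\lambda^{(n)}(x;t) = \sum_{\mu} x_n^{|\lambda-\mu|}\,\psi_{\lambda/\mu}(t)\,P_\mu^{(n-1)}(x;t)$ (Theorem~\ref{MDbr} at $q=0$) with $\prod_{i=1}^{n}(1-x_iu)^{-1} = (1-x_nu)^{-1}\prod_{i=1}^{n-1}(1-x_iu)^{-1}$ produces a recursion for the coefficient $sk_{\lambda+/\lambda}(t)$ in terms of the $\psi$-coefficients and lower-rank $sk$-coefficients, with a trivial base case $n=1$; there the main obstacle becomes the elementary but longer verification that the product formula for $sk_{\lambda+/\lambda}(t)$ satisfies this recursion, an identity among $t$-binomial coefficients.
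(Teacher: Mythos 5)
Your argument is correct, but it is worth noting that the paper itself offers no proof of this statement: Theorem \ref{Lauve-K} is quoted as a known result from Kirillov \cite{AK} and Konvalinka--Lauve \cite{KL} (see also Warnaar \cite{W}), where it is established by direct symmetric-function/Pieri-rule manipulations within the Hall--Littlewood theory. Your route is genuinely different and essentially classical: you expand $s_r=h_r=\sum_{\mu\vdash r}t^{n(\mu)}P_\mu(x;t)$ via $K_{(r),\mu}(t)=t^{n(\mu)}$, invoke Macdonald's Hall-algebra interpretation $f^{\lambda^+}_{\mu\lambda}(t)=t^{\,n(\lambda^+)-n(\mu)-n(\lambda)}g^{\lambda^+}_{\mu\lambda}(t^{-1})$, sum over cotypes to get $\alpha_{\lambda^+}(\lambda;p)$, and then use polynomiality in $t$ to pass from the specializations $t=p^{-1}$ (infinitely many primes) back to a formal identity; the reduction from $n$ variables to the ring of symmetric functions and the exponent bookkeeping matching $sk_{\lambda^+/\lambda}(t)=t^{\,n(\lambda^+)-n(\lambda)}\alpha_{\lambda^+}(\lambda;p)$ with the paper's explicit $t$-binomial definition of $sk$ both check out. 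What your approach buys is transparency of the $p$-adic meaning of the coefficients (exactly the interpretation the paper exploits later), at the cost of importing two external inputs that must be quoted independently to avoid circularity: the Hall-polynomial formula (3.6) of Macdonald, Chapter III, and the classical Delsarte-type product formula for the subgroup count $\alpha_{\lambda}(\mu;p)$, neither of which relies on the Pieri rule being proved. The cited proofs in \cite{AK,KL,W}, by contrast, stay entirely inside symmetric function theory and yield the more general skew Pieri rules. Your closing sketch of an induction on the number of variables via the branching rule is only an outline and would require the nontrivial $t$-binomial recursion to be verified, but it is not needed since the Hall-algebra argument is complete.
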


 We now recall two inner products that will appear throughout the paper.  We let $\langle \cdot, \cdot \rangle_{n}$ denote the Macdonald inner product (defined via integration over the $n$-torus).  In particular,
 \begin{multline} \label{integip}
 \langle P_{\lambda}^{(n)}(x;q,t), P_{\mu}^{(n)}(x;q,t) \rangle_{n} = \int_{T_{n}} P_{\lambda}^{(n)}(x;q,t) P_{\mu}^{(n)}(x^{-1};q,t) \tilde \Delta_{S}(x;q,t) dT \\= \delta_{\lambda, \mu} \frac{1}{d_{\lambda}(q,t)},
 \end{multline} 
 where an explicit formula for $d_{\lambda}(q,t)$ can be found in \cite{Mac}.  Also let $Q_{\mu}^{(n)}(x;q,t) = b_{\mu}(q,t)P_{\mu}^{(n)}(x;q,t)$ be scalar multiples of the Macdonald polynomials, and recall the other inner product $\langle \cdot, \cdot \rangle'$ which satisfies
\begin{equation} \label{otherip}
\langle P_{\lambda}^{(n)}(x;q,t), Q_{\mu}^{(n)}(x;q,t) \rangle' = \delta_{\lambda, \mu}
\end{equation} 
(so that $\langle P_{\lambda}^{(n)}(x;q,t), P_{\lambda}^{(n)}(x;q,t) \rangle' = \frac{1}{b_{\lambda}(q,t)}$). Note that this inner product is independent of $n$, and we have $\lim_{n \rightarrow \infty} \langle \cdot, \cdot \rangle_{n} = \langle \cdot, \cdot \rangle'$.  We have
 \begin{equation*}
 b_{\lambda}(t) = \prod_{i \geq 1} \phi_{m_{i}(\lambda)}(t),
 \end{equation*}
 where $m_{i}(\lambda)$ denotes the number of times $i$ occurs as a part of $\lambda$ and 
 \begin{equation*}
 \phi_{r}(t) = (1-t)(1-t^{2}) \cdots (1-t^{r}).
 \end{equation*}
 We also have
 \begin{equation*}
 d_{\lambda}(t) = \frac{1}{(1-t)^{n}} \prod_{i \geq 0} \phi_{m_{i}(\lambda)}(t),
 \end{equation*}
 so that if $l(\lambda) = n$, $b_{\lambda}(t) (1-t)^{n} = d_{\lambda}(t)$.
 
 We recall the following fact relating the branching coefficients $\phi_{\lambda/\beta}$ and $\psi_{\lambda/\beta}$ \cite{Mac}.
 \begin{proposition}\label{phipsi}We have
 \begin{equation*} 
 \phi_{\lambda/\beta}(q,t)/b_{\lambda}(q,t) = \psi_{\lambda/\beta}(q,t)/b_{\beta}(q,t).
 \end{equation*}
 \end{proposition}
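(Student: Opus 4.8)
The plan is to deduce the identity from the well-known relation between the skew Macdonald polynomials $P_{\lambda/\beta}$ and $Q_{\lambda/\beta}$, namely
\begin{equation*}
Q_{\lambda/\beta}(x;q,t) = \frac{b_{\lambda}(q,t)}{b_{\beta}(q,t)}\, P_{\lambda/\beta}(x;q,t),
\end{equation*}
specialized to a single variable. First I would establish this relation using the two inner products from \eqref{integip} and \eqref{otherip}. Recall that the skew functions are defined by $\langle P_{\lambda/\beta}, Q_{\nu}\rangle' = \langle P_{\lambda}, Q_{\beta}Q_{\nu}\rangle'$ and $\langle Q_{\lambda/\beta}, P_{\nu}\rangle' = \langle Q_{\lambda}, P_{\beta}P_{\nu}\rangle'$. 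Writing $P_{\lambda/\beta} = \sum_{\nu} a_{\nu}P_{\nu}$ and $Q_{\lambda/\beta} = \sum_{\nu} a'_{\nu}P_{\nu}$ and using orthogonality of the $P_{\nu}$ together with $Q_{\mu} = b_{\mu}P_{\mu}$, one computes $a_{\nu} = b_{\beta}b_{\nu}\langle P_{\lambda}, P_{\beta}P_{\nu}\rangle'$ and $a'_{\nu} = b_{\lambda}b_{\nu}\langle P_{\lambda}, P_{\beta}P_{\nu}\rangle'$, so that $a'_{\nu}/a_{\nu} = b_{\lambda}/b_{\beta}$ is independent of $\nu$, giving the displayed relation.

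Next I would set the number of variables to one. Iterating the branching rule (Theorem \ref{MDbr}) identifies $\psi_{\lambda/\beta}(q,t)$ as the coefficient in $P_{\lambda/\beta}(x_{1};q,t) = \psi_{\lambda/\beta}(q,t)\,x_{1}^{|\lambda/\beta|}$ (which vanishes unless $\lambda/\beta$ is a horizontal strip), and the Remark following Proposition \ref{phipsi}'s neighbours gives $\phi_{\lambda/\beta}(q,t) = Q_{\lambda/\beta}(1;q,t)$, i.e. $Q_{\lambda/\beta}(x_{1};q,t) = \phi_{\lambda/\beta}(q,t)\,x_{1}^{|\lambda/\beta|}$. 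Substituting both into the relation from the previous paragraph and cancelling the common power of $x_{1}$ yields $\phi_{\lambda/\beta}(q,t) = \tfrac{b_{\lambda}(q,t)}{b_{\beta}(q,t)}\psi_{\lambda/\beta}(q,t)$, which is exactly the assertion after dividing both sides by $b_{\lambda}(q,t)b_{\beta}(q,t)$.

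As an alternative, entirely self-contained but more computational route, one can verify the identity directly from the product formula for $\psi_{\lambda/\mu}(q,t)$ recorded in the Remark, the analogous product formula for $\phi_{\lambda/\mu}(q,t)$ (Macdonald, p.~341), and the arm/leg product expression for $b_{\lambda}(q,t)$; the quotient $\phi_{\lambda/\mu}/\psi_{\lambda/\mu}$ then telescopes into $b_{\lambda}/b_{\mu}$. I expect the only real obstacle to be bookkeeping: keeping the normalizations of the two inner products straight and confirming that the branching coefficients $\psi_{\lambda/\beta}$, $\phi_{\lambda/\beta}$ used in this paper coincide with the one-variable specializations of the skew Macdonald polynomials $P_{\lambda/\beta}$, $Q_{\lambda/\beta}$ — a matter of matching conventions rather than of genuine difficulty.
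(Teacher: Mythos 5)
Your argument is correct and is essentially the paper's: the paper states this proposition without proof, citing Macdonald's book, and your derivation --- establishing $Q_{\lambda/\beta}=(b_{\lambda}/b_{\beta})\,P_{\lambda/\beta}$ from the two inner products and then specializing to a single variable, where $P_{\lambda/\beta}(x_{1})=\psi_{\lambda/\beta}\,x_{1}^{|\lambda/\beta|}$ and $Q_{\lambda/\beta}(x_{1})=\phi_{\lambda/\beta}\,x_{1}^{|\lambda/\beta|}$ --- is exactly the standard route in the cited source. (One trivial slip in the last line: from $\phi_{\lambda/\beta}=\tfrac{b_{\lambda}}{b_{\beta}}\psi_{\lambda/\beta}$ you should divide by $b_{\lambda}$ alone, or equivalently clear denominators to $b_{\beta}\phi_{\lambda/\beta}=b_{\lambda}\psi_{\lambda/\beta}$ and then divide by $b_{\lambda}b_{\beta}$, to reach the stated form.)
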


  Note that, using (\ref{integip}) and (\ref{otherip}), the coefficients $c^{\delta}_{\lambda \mu}$ and $sk_{\lambda+/\lambda}$ may be defined in terms of inner products.  We have
 \begin{multline*}
c^{\delta}_{\lambda \mu}(q,t) =  \langle m_{\gamma}^{(n)}(x) P_{\mu}^{(n)}(x;q,t),  Q_{\delta}^{(n)}(x;q,t) \rangle' \\= d_{\delta}(q,t)\langle m_{\gamma}^{(n)}(x) P_{\mu}^{(n)}(x;q,t),  P_{\delta}^{(n)}(x;q,t) \rangle 
 \end{multline*}
 and similarly
 \begin{equation*}
 \langle P_{\lambda}^{(n)}(x;t) s_{r}^{(n)}(x), Q_{\lambda+}^{(n)}(x;t) \rangle' = sk_{\lambda+/\lambda}(t). \end{equation*}

 \section{The Gelfand-Tsetlin basis expansion} \label{sec: findim}
 
 In this section, we fix $k \in \mathbb{N}$ and set $t = q^{k}$.  We will prove Theorem  \ref{qtthm} of the introduction.  Namely, we will expand the trace function $\Phi_{\lambda}^{(n)}(x)$ in the Gelfand-Tsetlin basis and compute the diagonal coefficients $c_{\Lambda}(q,t)$.  We will use the multiplicity-one decomposition of $V_{\lambda + (k-1)\rho}$ as a $U_{q}(gl_{n-1})$-module, and iterate, in order to do this.

Etingof and Kirillov \cite{EK} provide the following closed form for the trace function at $\lambda = 0$:
 
 \begin{proposition} \label{tracezero}
 \begin{equation*}
 \Phi_{0}^{(n)}(x) = \prod_{i=1}^{k-1} \prod_{\alpha \in R^{+}} (x^{\alpha/2} - q^{2i}x^{-\alpha/2}) = x^{(k-1)\rho} \prod_{i=1}^{k-1} \prod_{n \geq l > m \geq 1} (1-q^{2i}x_{l}/x_{m})
 \end{equation*}
 \end{proposition}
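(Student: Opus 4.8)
The plan is to compute $\Phi_0^{(n)}$ directly from the data defining the intertwiner $\phi_0^{(k)}\colon V_{(k-1)\rho}\to V_{(k-1)\rho}\otimes U$, the content being essentially a rank-one computation. First I would record the constraints that pin down the answer. Since $\phi_0^{(k)}$ preserves weights and every weight space of $U$ is one-dimensional, the diagonal block of $\phi_0^{(k)}$ on the $\mu$-weight space of $V_{(k-1)\rho}$ lands in the weight-zero line $\C u_0\subset U$; summing over a weight basis one gets $\Phi_0^{(n)}(x)=\sum_\mu c_\mu(q)\,x^\mu$, the sum over the weights $\mu$ of $V_{(k-1)\rho}$ with $c_\mu\in\C(q)$. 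The normalization $v_{(k-1)\rho}\mapsto v_{(k-1)\rho}\otimes u_0+\cdots$ forces $c_{(k-1)\rho}=1$, and since the weight multiplicities of $V_{(k-1)\rho}$ are invariant under $w_0$, which sends $(k-1)\rho$ to $-(k-1)\rho$, the function $x^{-(k-1)\rho}\Phi_0^{(n)}(x)$ is a polynomial in the ratios $x_l/x_m$ ($l>m$) whose constant term and top-degree term both equal $1$. The right-hand side of the proposition enjoys exactly these properties, so only the intermediate coefficients are at stake.

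For the base case $n=2$ this is a finite $U_q(\mathfrak{sl}_2)$ computation: $V_{(k-1)\rho}$ is the irreducible of highest weight $k-1$, $U$ the one of highest weight $2(k-1)$, and $\phi_0^{(k)}$ is determined by repeatedly applying the raising and lowering operators to the normalized image of the highest-weight vector. Its diagonal matrix elements are the standard $U_q(\mathfrak{sl}_2)$ Clebsch--Gordan coefficients; extracting their $u_0$-parts and pairing against the weight monomials, one finds the coefficient of $(x_2/x_1)^j$ in $x^{-(k-1)\rho}\Phi_0^{(2)}(x)$ to be $(-1)^j q^{j(j+1)}\binom{k-1}{j}_{q^2}$, which is precisely the $q$-binomial expansion of $\prod_{i=1}^{k-1}(1-q^{2i}x_2/x_1)$.

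For the inductive step in $n$, restrict $V_{(k-1)\rho_n}$ and $U^{(n)}$ to a standard copy of $U_q(\mathfrak{gl}_{n-1})$. Both restrictions are multiplicity-free, so $\phi_0^{(k)}$ decomposes into a direct sum of $U_q(\mathfrak{gl}_{n-1})$-intertwiners indexed by the interlacing data; moreover $q^{h_n}$ commutes with $U_q(\mathfrak{gl}_{n-1})$ and so acts by a scalar on each isotypic component, which lets the weighted trace separate off its $x_n$-dependence. One then wants the sum over the interlacing data to collapse to $\Phi_0^{(k),(n-1)}(x_1,\dots,x_{n-1})$ — known by induction, up to the harmless central twist identifying $\rho_n\vert_{\mathfrak{gl}_{n-1}}$ with $\rho_{n-1}$ — times the contribution of the new roots, $\prod_{i=1}^{k-1}\prod_{\alpha\in R^+_n\setminus R^+_{n-1}}(x^{\alpha/2}-q^{2i}x^{-\alpha/2})$, whereupon multiplying reassembles the full product over $R^+_n$. \textbf{The main obstacle is this collapse}: one must show that the $u_0$-component of the restricted $\phi_0^{(k)}$ is concentrated on the $\mathfrak{gl}_{n-1}$-intertwiner between the leading isotypic pieces, and that the scalar relating it to the normalized $\mathfrak{gl}_{n-1}$-intertwiner is exactly the above product of rank-one $q$-Pochhammer factors; this is where the explicit $U_q(\mathfrak{sl}_2)$-analysis along the last simple root and the weight bookkeeping for $(k-1)\rho$ do the real work. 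Alternatively, granting Theorems \ref{EKfin} and \ref{qtthm}, one can bypass the representation theory: substituting $\widetilde\Phi_\mu=\widetilde\Phi_0\,P_\mu$ into the branching rule at $\lambda=0$ reduces the proposition to an elementary identity for the degenerate coefficients $c_{0,\mu}$, where the $\beta$-sum in \eqref{eq:branchcoeff} collapses to a single term and the surviving $\Omega$-coefficient is evaluated by its known product formula.
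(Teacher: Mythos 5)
The paper gives no proof of this proposition: it is quoted from \cite{EK}, where it is part of the Etingof--Kirillov main theorem (a quantum analogue of the Weyl denominator formula at $\lambda=0$). So any proof you supply must genuinely redo that computation, and your sketch does not: the step you yourself label ``the main obstacle'' is the entire content. Restricting $V_{(k-1)\rho_n}$ to $U_q(\mathfrak{gl}_{n-1})$ and invoking multiplicity-freeness only tells you that $\Phi_0^{(n)}$ is a sum, over interlacing weights $\bar\mu$, of \emph{unknown} scalars $\hat a_{0,\bar\mu}(q)$ times a power of $x_n$ times the normalized lower-rank trace --- this is exactly Proposition \ref{finbrrule} at $\lambda=0$, which the paper proves with no difficulty precisely because it leaves the scalars undetermined. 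Determining those scalars (your ``collapse'') is equivalent to knowing the ratio $\Phi_0^{(n)}/\Phi_0^{(n-1)}$, i.e.\ to the proposition itself; in the paper the logic runs in the opposite direction (Proposition \ref{tracezeroratio} and the evaluation of the branching coefficients $a_{\lambda,\mu}$ take Proposition \ref{tracezero} as input). The asserted rank-one analysis along the last simple root, with the weight bookkeeping for $(k-1)\rho$, is where all the work lies, and it is not carried out; nor is the $n=2$ base case actually computed (the claimed Clebsch--Gordan coefficient $(-1)^jq^{j(j+1)}\binom{k-1}{j}_{q^2}$ is plausible but unverified).

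Your fallback route is circular within this paper: Theorem \ref{qtthm} is proved using Proposition \ref{tracezero} (through Proposition \ref{tracezeroratio}), so it cannot be ``granted'' in order to derive it, and Theorem \ref{EKfin} by itself only determines the ratios $\Phi_\lambda/\Phi_0$, hence says nothing about $\Phi_0$. A smaller error: your symmetry argument that $x^{-(k-1)\rho}\Phi_0^{(n)}(x)$ has top-degree coefficient $1$ is false --- invariance of weight multiplicities under $w_0$ does not control the diagonal coefficient of the intertwiner at the lowest weight, and indeed the product formula gives that coefficient as $\prod_{i=1}^{k-1}(-q^{2i})^{\binom{n}{2}}$, not $1$; this is only motivational in your write-up, but the claim that the right-hand side ``enjoys exactly these properties'' is wrong as stated.
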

 
\begin{proposition} \label{tracezeroratio}
\begin{equation*}
\frac{\phi_{0}^{(n)}(x)}{\phi_{0}^{(n-1)}(x)} 
= x^{(k-1)(\rho_{n}-\rho_{n-1})} \sum_{l_{1}, \dots, l_{n-1}=0}^{\infty} t^{2\sum l_{i}} \frac{(t^{-2}q^{2};q^{2})_{l_{1}} \cdots (t^{-2}q^{2};q^{2})_{l_{n-1}}}{(q^{2};q^{2})_{l_{1}} \cdots (q^{2};q^{2})_{l_{n-1}}} \frac{x_{n}^{\sum l_{i}}}{x_{1}^{l_{1}} \cdots x_{n-1}^{l_{n-1}}}
\end{equation*}
\end{proposition}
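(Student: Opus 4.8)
The plan is to deduce this directly from the closed form of Proposition~\ref{tracezero} together with the $q$-binomial theorem. First I would form the ratio: by Proposition~\ref{tracezero},
\begin{equation*}
\frac{\Phi_{0}^{(n)}(x)}{\Phi_{0}^{(n-1)}(x)} = x^{(k-1)(\rho_{n}-\rho_{n-1})}\cdot\frac{\prod_{i=1}^{k-1}\prod_{n\geq l>m\geq 1}(1-q^{2i}x_{l}/x_{m})}{\prod_{i=1}^{k-1}\prod_{n-1\geq l>m\geq 1}(1-q^{2i}x_{l}/x_{m})}.
\end{equation*}
Every pair $(l,m)$ with $l\leq n-1$ occurring in the denominator also occurs in the numerator and cancels, so the ratio of products collapses to $\prod_{i=1}^{k-1}\prod_{m=1}^{n-1}(1-q^{2i}x_{n}/x_{m})$; the prefactor $x^{(k-1)(\rho_{n}-\rho_{n-1})}$ already agrees with the one in the statement (cf.\ \eqref{rhoeq}).

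Next, fixing $m$ and putting $y=x_{n}/x_{m}$, I would recognize the inner product as a terminating $q$-shifted factorial in base $q^{2}$: since $t=q^{k}$,
\begin{equation*}
\prod_{i=1}^{k-1}(1-q^{2i}y)=(q^{2}y;q^{2})_{k-1}=\frac{(q^{2}y;q^{2})_{\infty}}{(t^{2}y;q^{2})_{\infty}}.
\end{equation*}
Applying the $q$-binomial theorem $\frac{(az;q^{2})_{\infty}}{(z;q^{2})_{\infty}}=\sum_{l\geq 0}\frac{(a;q^{2})_{l}}{(q^{2};q^{2})_{l}}z^{l}$ with $z=t^{2}y$ and $a=t^{-2}q^{2}$ gives
\begin{equation*}
\prod_{i=1}^{k-1}(1-q^{2i}x_{n}/x_{m})=\sum_{l=0}^{\infty}t^{2l}\,\frac{(t^{-2}q^{2};q^{2})_{l}}{(q^{2};q^{2})_{l}}\,\frac{x_{n}^{l}}{x_{m}^{l}}.
\end{equation*}
Since $(t^{-2}q^{2};q^{2})_{l}=(q^{2-2k};q^{2})_{l}$ vanishes for $l\geq k$, the right-hand side is in fact a polynomial of degree $k-1$ in $y$ matching the left-hand side, so there is no convergence subtlety.

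Finally I would take the product over $m=1,\dotsc,n-1$ of these expansions; expanding the product of the $n-1$ series and letting $l_{m}$ be the index attached to the $m$-th factor yields
\begin{equation*}
\prod_{m=1}^{n-1}\prod_{i=1}^{k-1}(1-q^{2i}x_{n}/x_{m})=\sum_{l_{1},\dotsc,l_{n-1}=0}^{\infty}t^{2\sum l_{i}}\frac{(t^{-2}q^{2};q^{2})_{l_{1}}\cdots(t^{-2}q^{2};q^{2})_{l_{n-1}}}{(q^{2};q^{2})_{l_{1}}\cdots(q^{2};q^{2})_{l_{n-1}}}\,\frac{x_{n}^{\sum l_{i}}}{x_{1}^{l_{1}}\cdots x_{n-1}^{l_{n-1}}},
\end{equation*}
which, multiplied by the prefactor, is exactly the asserted identity. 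There is no genuine obstacle here; the only points requiring care are the bookkeeping of the $q$-shifted factorials and the correct substitution of parameters into the $q$-binomial theorem—in particular keeping the base equal to $q^{2}$ throughout and the identification $a=q^{2}/t^{2}$.
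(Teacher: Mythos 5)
Your proposal is correct and follows essentially the same route as the paper: cancel the common factors in the ratio coming from Proposition \ref{tracezero}, rewrite each finite product $\prod_{i=1}^{k-1}(1-q^{2i}x_n/x_j)$ as $(q^{2}x_n/x_j;q^{2})_{\infty}/(t^{2}x_n/x_j;q^{2})_{\infty}$ using $t=q^{k}$, expand by the $q$-binomial theorem in base $q^{2}$, and multiply the resulting series over $j=1,\dotsc,n-1$ together with the prefactor $x^{(k-1)(\rho_{n}-\rho_{n-1})}$. The only (immaterial) difference is your intermediate bookkeeping of the $q$-shifted factorial and the added remark on termination, which the paper omits.
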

 
\begin{proof}
By Proposition \ref{tracezero}, we have
\begin{equation*}
\frac{\phi_{0}^{(n)}(x)}{\phi_{0}^{(n-1)}(x)} = x^{(k-1)(\rho_{n}-\rho_{n-1})} \prod_{i=1}^{k-1}\prod_{j=1}^{n-1} (1-q^{2i}x_{n}/x_{j}).
\end{equation*}
Now note that, for fixed $1 \leq j \leq n-1$,
\begin{multline*}
\prod_{i=1}^{k-1} (1-q^{2i}x_{n}/x_{j}) = \frac{(x_{n}/x_{j};q^{2})_{k}}{(x_{n}/x_{j};q^{2})_{1}} = \frac{(x_{n}/x_{j};q^{2})_{\infty}}{(q^{2k}x_{n}/x_{j};q^{2})_{\infty}} \cdot \frac{(q^{2}x_{n}/x_{j};q^{2})_{\infty}}{(x_{n}/x_{j} ; q^{2})_{\infty}} \\ = \frac{(q^{2}x_{n}/x_{j};q^{2})_{\infty}}{(q^{2k}x_{n}/x_{j};q^{2})_{\infty}}.
\end{multline*}
Now, putting $t = q^{k}$ and using the $q$-binomial theorem,
\begin{equation*}
\frac{(q^{2}x_{n}/x_{j};q^{2})_{\infty}}{(t^{2}x_{n}/x_{j};q^{2})_{\infty}} = \sum_{m=0}^{\infty} \frac{(t^{-2}q^{2};q^{2})_{m}}{(q^{2};q^{2})_{m}}(t^{2}x_{n}/x_{j})^{m}.
\end{equation*}
Taking the product over all $1 \leq j \leq n-1$ and multiplying by $x^{(k-1)(\rho_{n}-\rho_{n-1})}$ gives the result.

\end{proof} 

\begin{lemma}
Let $\lambda$ be fixed with $l(\lambda) = n$.  Then the map 
\begin{equation*}
\mu \rightarrow \bar{\mu} = \mu + (k-1)\Big(\rho_{n-1} + \Big(\frac{1}{2}, \frac{1}{2}, \dots, \frac{1}{2}\Big) \Big) = \mu + (k-1)\rho_{n}|_{n-1}
\end{equation*}
is a bijection between:
\begin{itemize}
\item $\mu \subset \lambda$, such that $\lambda_{j+1} - \mu_{j} \leq k-1$ for all $j$
\item $\bar{\mu} \preceq \lambda + (k-1)\rho_{n}$, such that $\bar{\mu} - (k-1)\rho_{n}|_{n-1} \in \mathcal{P}_{+}$
\end{itemize}
\end{lemma}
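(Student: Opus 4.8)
The plan is to observe that $\mu \mapsto \bar\mu$ is simply translation by the fixed vector $(k-1)\rho_n|_{n-1} = (k-1)\bigl(\rho_{n-1}+(\tfrac12,\dots,\tfrac12)\bigr)$, so it is automatically injective; the entire content of the lemma is that it carries the first set onto the second. I would fix $\lambda$ with $l(\lambda)=n$, set $L := \lambda + (k-1)\rho_n$ (which lies in $\mathcal P_+^{(n)}$, since $L_j - L_{j+1} = (\lambda_j-\lambda_{j+1}) + (k-1) \in \Z_+$), and for an arbitrary $\mu \in \mathcal P_+^{(n-1)}$ put $\bar\mu = \mu + (k-1)\rho_n|_{n-1}$. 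Using that consecutive entries of $\rho_n$ differ by $1$ (cf. \eqref{rhoeq}), I would record the two elementary identities, valid for $1 \le j \le n-1$:
\[
L_j - \bar\mu_j = \lambda_j - \mu_j, \qquad \bar\mu_j - L_{j+1} = (k-1) - (\lambda_{j+1} - \mu_j).
\]

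Next I would unwind the interlacing relation: by definition $\bar\mu \preceq L$ means precisely that $L_j - \bar\mu_j \in \Z_+$ and $\bar\mu_j - L_{j+1} \in \Z_+$ for all $1 \le j \le n-1$. By the first identity the condition $L_j - \bar\mu_j \in \Z_+$ is exactly the containment $\mu \subset \lambda$. By the second identity — noting that $\lambda_{j+1} - \mu_j = (\lambda_{j+1}-\lambda_j) + (\lambda_j-\mu_j) \in \Z$ once $\mu \subset \lambda$ and $\lambda \in \mathcal P_+^{(n)}$ — the condition $\bar\mu_j - L_{j+1} \in \Z_+$ is exactly $\lambda_{j+1} - \mu_j \le k-1$. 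Finally, $\bar\mu - (k-1)\rho_n|_{n-1} = \mu$, so the side condition ``$\bar\mu - (k-1)\rho_n|_{n-1} \in \mathcal P_+$'' in the second bullet is literally the requirement $\mu \in \mathcal P_+^{(n-1)}$ built into the first bullet. Combining these, $\mu$ lies in the first set if and only if $\bar\mu$ lies in the second, which together with injectivity gives the bijection. For completeness I would also remark that if $\bar\mu \preceq L$ then $\bar\mu \in \mathcal P_+^{(n-1)}$ automatically, since $\bar\mu_j - \bar\mu_{j+1} = (\bar\mu_j - L_{j+1}) + (L_{j+1} - \bar\mu_{j+1}) \in \Z_+$; so the genuinely extra demand on the right-hand side beyond interlacing is precisely that the \emph{untranslated} $\mu$ again be dominant.

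There is essentially no obstacle here: the argument is a bookkeeping translation between two lists of linear inequalities. The only point requiring a moment's care is to establish integrality of $\lambda_{j+1} - \mu_j$ — which uses $\mu \subset \lambda$ together with $\lambda \in \mathcal P_+^{(n)}$ — before reading the inequality $\lambda_{j+1} - \mu_j \le k-1$ off of $\bar\mu_j - L_{j+1} \in \Z_+$; this is exactly why the containment condition and the ``$\le k-1$'' condition must be treated as a package rather than independently.
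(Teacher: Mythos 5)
Your proposal is correct and follows essentially the same route as the paper, which simply cites the definition of the interlacing relation together with Equation \eqref{rhoeq}; your two identities $L_j-\bar\mu_j=\lambda_j-\mu_j$ and $\bar\mu_j-L_{j+1}=(k-1)-(\lambda_{j+1}-\mu_j)$ are exactly the bookkeeping the paper leaves implicit. The only addition is your explicit check of integrality of $\lambda_{j+1}-\mu_j$, which is a reasonable detail to record but not a different argument.
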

\begin{proof}
Follows from the definition of the interlacing condition and Equation \ref{rhoeq}.
\end{proof}

 \begin{proposition}\label{finbrrule}
 The following branching rule for trace functions holds:
 \begin{equation*}
 \Phi_{\lambda}^{(n)}(x;q,q^{k}) = (x_{1} \cdots x_{n-1})^{\frac{k-1}{2}}\sum_{\mu \subset \lambda} x_{n}^{\rho(\lambda, \mu)} a_{\lambda, \mu}(q) \Phi_{\mu}^{(n-1)}(x;q,q^{k})
 \end{equation*}
 for some coefficients $a_{\lambda, \mu}(q)$.
 \end{proposition}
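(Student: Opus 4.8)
The plan is to restrict the whole picture to the Hopf subalgebra $U_q(\mathfrak{gl}_{n-1})\subset U_q(\mathfrak{gl}_n)$ and compute the trace one $U_q(\mathfrak{gl}_{n-1})$-isotypic block at a time, as signalled in the opening of this section. For generic $q$ the quantum branching rule is still multiplicity-free, $V_{\lambda+(k-1)\rho_n}|_{U_q(\mathfrak{gl}_{n-1})}=\bigoplus_{\nu}V^{(n-1)}_\nu$ with $\nu$ running over the weights interlacing $\lambda+(k-1)\rho_n$; write $\iota_\nu$ and $p_\nu$ for the equivariant inclusion and projection of the summand $V^{(n-1)}_\nu$. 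Since $x^h=x_n^{h_n}\,x_1^{h_1}\cdots x_{n-1}^{h_{n-1}}$ and $h_n$ is central for $\mathfrak{gl}_{n-1}$, it acts on $V^{(n-1)}_\nu$ by the scalar $|\lambda|-|\nu|$ (because $h_1+\cdots+h_n$ acts on $V_{\lambda+(k-1)\rho_n}$ by $|\lambda|$, while $h_1+\cdots+h_{n-1}$ acts on $V^{(n-1)}_\nu$ by $|\nu|$), so taking the trace blockwise gives
\[
 \Tr\!\big(\phi^{(k)}_\lambda\,x^h\big)=\sum_{\nu}x_n^{\,|\lambda|-|\nu|}\,\big(\Tr_{V^{(n-1)}_\nu}\!\otimes 1_U\big)\!\big(\phi_\nu\cdot x_1^{h_1}\cdots x_{n-1}^{h_{n-1}}\big)\ \in\ U,
\]
where $\phi_\nu:=(p_\nu\otimes 1_U)\circ\phi^{(k)}_\lambda\circ\iota_\nu\in\operatorname{Hom}_{U_q(\mathfrak{gl}_{n-1})}\!\big(V^{(n-1)}_\nu,\,V^{(n-1)}_\nu\otimes U\big)$; recall that $\Phi^{(n)}_\lambda(x)$ is the coefficient of $u_0$ in this element of $U$.

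Next I would isolate the piece of $U$ that can see $u_0$. Decomposing $U|_{U_q(\mathfrak{gl}_{n-1})}=\bigoplus_\sigma U_\sigma$ and using that $h_1+\cdots+h_n$ acts by $0$ on $U$, the element $h_n$ acts on $U_\sigma$ by $-|\sigma|$, so the weight-zero line $\C u_0$ lies in the unique summand $U_{\sigma_0}$ with $|\sigma_0|=0$, and reading off the interlacing conditions identifies $U_{\sigma_0}$ with the analogous representation $U^{(n-1)}\cong V^{(n-1)}_{(k-1)(n-2,-1,\dots,-1)}$ for $U_q(\mathfrak{gl}_{n-1})$, $u_0$ being its weight-zero vector. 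Hence only the $U_{\sigma_0}$-component $\phi_{\nu,\sigma_0}\in\operatorname{Hom}_{U_q(\mathfrak{gl}_{n-1})}\!\big(V^{(n-1)}_\nu,\,V^{(n-1)}_\nu\otimes U^{(n-1)}\big)$ of $\phi_\nu$ contributes to the $u_0$-coefficient. The crucial point is now a Pieri computation: writing $U^{(n-1)}\cong S^{(k-1)(n-1)}(\C^{n-1})\otimes(\det)^{-(k-1)}$ shows $V^{(n-1)}_\nu$ occurs in $V^{(n-1)}_\nu\otimes U^{(n-1)}$, necessarily with multiplicity one, exactly when $\nu_j-\nu_{j+1}\ge k-1$ for all $j$, i.e.\ exactly when $\mu:=\nu-(k-1)\rho_n|_{n-1}\in\mathcal P_+$. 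By the bijection $\mu\mapsto\bar\mu$ of the Lemma above, the surviving $\nu$ are then precisely the $\bar\mu$ with $\mu\subset\lambda$, $\lambda_{j+1}-\mu_j\le k-1$; for those, Schur's lemma forces $\phi_{\nu,\sigma_0}=a_{\lambda,\mu}(q)\,\phi^{(n-1)}_{\bar\mu}$ for a scalar $a_{\lambda,\mu}(q)$, with $\phi^{(n-1)}_{\bar\mu}$ the normalized Etingof--Kirillov-type intertwiner on $V^{(n-1)}_{\bar\mu}$, and all other blocks contribute $0$.

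It remains to recognize each surviving block. By \eqref{rhoeq} we have $\rho_n|_{n-1}-\rho_{n-1}=(\tfrac12,\dots,\tfrac12)$, so $V^{(n-1)}_{\bar\mu}\cong V^{(n-1)}_{\mu+(k-1)\rho_{n-1}}\otimes(\det)^{(k-1)/2}$ and $\phi^{(n-1)}_{\bar\mu}$ is the intertwiner $\phi^{(n-1)}_\mu$ (whose trace is $\Phi^{(n-1)}_\mu$) tensored with the identity on $(\det)^{(k-1)/2}$; hence its trace against $x_1^{h_1}\cdots x_{n-1}^{h_{n-1}}$ factors as $\Tr_{(\det)^{(k-1)/2}}\!\big(x_1^{h_1}\cdots x_{n-1}^{h_{n-1}}\big)=(x_1\cdots x_{n-1})^{(k-1)/2}$ times $\Tr_{V^{(n-1)}_{\mu+(k-1)\rho_{n-1}}}\!\big(\phi^{(n-1)}_\mu\,x_1^{h_1}\cdots x_{n-1}^{h_{n-1}}\big)=\Phi^{(n-1)}_\mu(x_1,\dots,x_{n-1})\,u_0$. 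Substituting back, pulling the common factor $(x_1\cdots x_{n-1})^{(k-1)/2}$ out of the sum over $\mu$, and recording the $x_n$-power coming from the $h_n$-degree of $V^{(n-1)}_{\bar\mu}$ as $x_n^{\rho(\lambda,\mu)}$, one gets the asserted branching rule, with $a_{\lambda,\mu}(q)$ the Schur scalars above (and $a_{\lambda,\mu}(q)=0$ for the remaining $\mu\subset\lambda$).

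The main obstacle is the multiplicity step of the second paragraph: pinning down which summands $V^{(n-1)}_\nu$ of the restriction carry a nonzero intertwiner into $V^{(n-1)}_\nu\otimes U^{(n-1)}$ and matching this with the combinatorial bijection of the preceding Lemma, together with the bookkeeping that of all the $U_q(\mathfrak{gl}_{n-1})$-isotypic components of $U$ only $U_{\sigma_0}\cong U^{(n-1)}$ affects the scalar-valued trace function. Everything else is the classically-shaped quantum branching rule, Schur's lemma, and the determinant-twist identity \eqref{rhoeq}; iterating the proposition down to $U_q(\mathfrak{gl}_1)$ is what then feeds the Gelfand--Tsetlin expansion in Theorem~\ref{qtthm}.
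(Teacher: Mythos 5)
Your proposal follows essentially the same route as the paper: restrict to $U_q(\mathfrak{gl}_{n-1})$ via the multiplicity-free decomposition of $V_{\lambda+(k-1)\rho_n}$, take the trace block by block, identify each projected block intertwiner $V_{\bar\mu}\to V_{\bar\mu}\otimes U$ with a scalar multiple of the rank-$(n-1)$ Etingof--Kirillov intertwiner (or zero), and reparametrize $\bar\mu=\mu+(k-1)\rho_n|_{n-1}$, which is exactly where the $(x_1\cdots x_{n-1})^{(k-1)/2}$ factor comes from. The only difference is that where the paper simply cites \cite{EK} for the existence/uniqueness/vanishing of such intertwiners, you rederive it by decomposing $U$ over $U_q(\mathfrak{gl}_{n-1})$, isolating the unique summand containing $u_0$, and running a Pieri-rule multiplicity count --- a correct and somewhat more self-contained justification of that same step.
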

 \begin{proof}
 One first notes the multiplicity-free decomposition of $V_{\lambda + (k-1)\rho_{n}}$ as a module over $U_{q}(gl_{n-1})$:
 \begin{equation*}
 V_{\lambda + (k-1)\rho_{n}}|_{U_{q}(gl_{n-1})} = \oplus_{\bar{\mu} \preceq \lambda + (k-1)\rho_{n}} V_{\bar{\mu}}.
  \end{equation*}
 Thus, we have 
 \begin{multline*}
 \Phi_{\lambda}(x;q,q^{k}) = \Tr(\phi_{\lambda} \cdot x^{h}) = \sum_{\bar{\mu} \preceq \lambda + (k-1)\rho_{n} } \Tr(\phi_{\lambda} \cdot x^{h} |_{V_{\bar{\mu}}}) \\ = \sum_{\bar{\mu} \preceq \lambda + (k-1)\rho_{n} } \Tr\Big( (\Proj_{V_{\bar{\mu}}} \otimes Id) \circ \phi_{\lambda} \circ x^{h}|_{V_{\bar{\mu}}} \Big),
 \end{multline*}
 since trace only takes into account diagonal coefficients.  

We have
\begin{equation*}
\phi_{\lambda}|_{V_{\bar{\mu}}}: V_{\bar{\mu}} \rightarrow V_{\lambda + (k-1)\rho_{n}} \otimes U \simeq \oplus_{\alpha \preceq \lambda + (k-1)\rho_{n}} V_{\alpha} \otimes U,
\end{equation*}
thus
 \begin{equation*}
 (\Proj_{V_{\bar{\mu}}} \otimes Id) \circ \phi_{\lambda}|_{V_{\bar{\mu}}} : V_{\bar{\mu}} \rightarrow V_{\bar{\mu}} \otimes U
 \end{equation*}
 is an intertwining operator.  By \cite{EK}, this implies that
 \begin{equation*}
  (\Proj_{V_{\bar{\mu}}} \otimes Id) \circ \phi_{\lambda}|_{V_{\bar{\mu}}} = \begin{cases} \hat{a}_{\lambda, \bar{\mu}}(q) \cdot \phi_{\bar{\mu} - (k-1)\rho_{n-1}}, & \text{if } \bar{\mu} - (k-1)\rho_{n-1} \in \mathcal{P}_{+} \\
  0, & \text{else,}
  \end{cases}
 \end{equation*}
for some coefficients $\hat{a}_{\lambda, \bar{\mu}}(q)$.  Thus, we have,
\begin{equation*}
\Phi_{\lambda}(x;q,q^{k}) = \sum_{\substack{\bar{\mu} \preceq \lambda + (k-1)\rho_{n} \\ \bar{\mu} - (k-1)\rho_{n-1} \in \mathcal{P}_{+}}} \hat{a}_{\lambda, \bar{\mu}}(q) \cdot x_{n}^{\rho(\lambda, \bar{\mu})} \cdot \Phi_{\bar{\mu} - (k-1)\rho_{n-1}}(x;q,q^{k}).
\end{equation*}
Finally we reparametrize by setting $\mu = \bar{\mu} - (k-1)\rho_{n-1} - (k-1)(\frac{1}{2})^{n-1}$ and defining $a_{\lambda, \mu}(q) = \hat{a}_{\lambda, \bar{\mu}}(q)$ with the condition that $a_{\lambda,\mu} = 0$ if $\lambda_{j+1} - \mu_{j} \leq k-1$ does not hold for all $j$.  The result now follows by the previous Lemma.

 \end{proof}

By iterating the branching rule of the previous propostion and recalling that the Gelfand-Tsetlin basis is also obtained by iterating the multiplicty-free decomposition, we obtain the following result.
\begin{proposition}
We have the following formula for $\Phi_{\lambda}^{(n)}(x;q,q^{k})$ as a sum over Gelfand-Tsetlin patterns $(\lambda^{(0)}, \lambda^{(1)}, \dots, \lambda^{(n-1)})$ with $\lambda^{(0)} = \lambda + (k-1)\rho_{n}$:
 \begin{equation*}
 \Phi_{\lambda}^{(n)}(x;q,q^{k}) = \sum_{\substack{\Lambda \in GT(\lambda + (k-1)\rho_{n})\\ \Lambda = (\lambda^{(0)}, \cdots \lambda^{(n-1)})}}\prod_{1 \leq i \leq n-1}a_{\lambda^{(i-1)}, \lambda^{(i)}}(q) x^{\text{wt}(\Lambda)}
 \end{equation*}
 \end{proposition}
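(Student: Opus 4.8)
The plan is to establish the formula by induction on $n$, simply iterating the branching rule of Proposition \ref{finbrrule}. The base case $n=1$ is immediate: $\rho_1 = (0)$, so $V_{\lambda+(k-1)\rho_1}=V_\lambda$ is one-dimensional, $GT(\lambda)$ consists of the single pattern $(\lambda)$ of weight $(\lambda_1)$, the product over $1\le i\le 0$ is empty, and $\Phi^{(1)}_\lambda(x_1)=x_1^{\lambda_1}$ directly from the definitions.

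For the inductive step, I would first apply Proposition \ref{finbrrule} to obtain
\begin{equation*}
\Phi_{\lambda}^{(n)}(x;q,q^{k}) = (x_{1}\cdots x_{n-1})^{\frac{k-1}{2}}\sum_{\mu\subset\lambda} x_{n}^{\rho(\lambda,\mu)}\, a_{\lambda,\mu}(q)\,\Phi_{\mu}^{(n-1)}(x_{1},\dots,x_{n-1};q,q^{k}),
\end{equation*}
where I have used that $\Phi_\mu^{(n-1)}$, being a trace over a $U_q(\mathfrak{gl}_{n-1})$-module against $x_1^{h_1}\cdots x_{n-1}^{h_{n-1}}$, involves only the first $n-1$ variables. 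Applying the inductive hypothesis to each $\Phi_\mu^{(n-1)}$ expands it over Gelfand-Tsetlin patterns of shape $\mu+(k-1)\rho_{n-1}$, and substituting yields a sum over chains $\lambda = \mu^{(0)}\supset\mu^{(1)}\supset\dots\supset\mu^{(n-1)}$ with coefficient $\prod_{1\le i\le n-1} a_{\mu^{(i-1)},\mu^{(i)}}(q)$.

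Next I would identify the index set. The Gelfand-Tsetlin basis of $V_{\lambda+(k-1)\rho_n}$ is by construction obtained by iterating the multiplicity-free restriction $U_q(\mathfrak{gl}_n)\downarrow U_q(\mathfrak{gl}_{n-1})\downarrow\dots\downarrow U_q(\mathfrak{gl}_1)$ --- precisely the decomposition used in the proof of Proposition \ref{finbrrule} --- so the rows $\lambda^{(i)}$ of a pattern $\Lambda\in GT(\lambda+(k-1)\rho_n)$ run over the summands appearing at each stage of the iteration. By the Lemma preceding Proposition \ref{finbrrule} (equivalently, Definition \ref{def:gt_index} together with \eqref{rhoeq}), the nonzero chains $\mu^{(0)}\supset\dots\supset\mu^{(n-1)}$ arising above correspond, via $\mu^{(i)}\mapsto\overline{\mu}^{(i)}$, precisely to the rows of such patterns; for any remaining pattern --- one in which some $\overline{\mu}^{(i)}$ is not the shift of an honest partition in $\mathcal P^{(n-i)}_+$ --- the relevant intertwiner vanishes, exactly as in Proposition \ref{finbrrule}, so its coefficient is $0$. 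Extending the definition of the coefficients $a_{\lambda^{(i-1)},\lambda^{(i)}}(q)$ by zero in those cases, the sum may be written over all of $GT(\lambda+(k-1)\rho_n)$.

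Finally, one checks the weight monomial. Unwinding the iteration, the $i$-th application of Proposition \ref{finbrrule} freezes the top remaining variable with the exponent prescribed by that proposition and introduces an overall factor $(x_1\cdots x_{n-i})^{\frac{k-1}{2}}$. Collecting the exponent of each $x_j$ and simplifying with \eqref{rhoeq}, one finds the total monomial equals $x^{\mathrm{wt}(\Lambda)}$ for $\Lambda=(\overline{\mu}^{(0)}\succeq\dots\succeq\overline{\mu}^{(n-1)})$, the point being that the accumulated $\frac{k-1}{2}$-shifts together with the $(k-1)\rho$-shifts are exactly what the definition $\overline{\mu}^{(i)}=\mu^{(i)}+(k-1)\rho_{n-i}+(k-1)(i/2,\dots,i/2)$ records. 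I expect this last step --- reconciling the $(x_1\cdots x_{n-1})^{\frac{k-1}{2}}$ factors of Proposition \ref{finbrrule} with the shifted labels $\overline{\mu}^{(i)}$ and the Gelfand-Tsetlin weight --- to be the only place requiring genuine care; the rest is a formal iteration of results already in hand.
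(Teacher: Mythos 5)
Your proposal is correct and follows essentially the same route as the paper, which obtains the formula precisely by iterating the branching rule of Proposition \ref{finbrrule} and observing that the Gelfand-Tsetlin basis arises from iterating the same multiplicity-free restriction; you merely spell out the induction, the indexing via the shift $\mu^{(i)}\mapsto\overline{\mu}^{(i)}$, and the bookkeeping of the $(x_1\cdots x_{n-1})^{\frac{k-1}{2}}$ factors, which the paper leaves implicit.
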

We will show that the coefficients $a_{\lambda,\mu}(q)$ are equal to $c_{\lambda, \mu}(q,q^{k})$ defined in the introduction.  We will prove this through a series of propositions.  Recall the definitions of the functions $g( \cdot; \cdot, \cdot), \psi_{\cdot/\cdot}(\cdot, \cdot), c_{\cdot,\cdot}^{\cdot}(\cdot;\cdot)$ in the introduction.

\begin{lemma}
  For any $m \in \C$, the branching coefficients $a_{\lambda,\mu}(q)$ satisfy the shift invariance:
  \begin{equation*}
    a_{\lambda + m^n, \mu + m^{n-1}}(q) = a_{\lambda, \mu}(q).
  \end{equation*}
\end{lemma}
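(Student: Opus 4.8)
The plan is to reduce the claimed shift invariance to a known feature of the intertwiner $\phi^{(k)}_{\lambda}$, namely that it is essentially unchanged when $\lambda$ is shifted by a multiple of $(1,1,\dots,1)$. First I would recall that $\lambda \in \mathcal P_+^{(n)}$ decomposes as $(a,\dots,a) + \widetilde\lambda$ with $a \in \C$ and $\widetilde\lambda$ a genuine partition, and that tensoring an irreducible $V_\lambda$ with the one-dimensional representation $\det^{m} := V_{(m,\dots,m)}$ gives $V_{\lambda + m^n}$. Under this twist the intertwiner $\phi^{(k)}_{\lambda}\colon V_{\lambda + (k-1)\rho} \to V_{\lambda + (k-1)\rho}\otimes U$ is carried to $\phi^{(k)}_{\lambda + m^n}\colon V_{\lambda + m^n + (k-1)\rho}\to V_{\lambda + m^n + (k-1)\rho}\otimes U$; note that $U$ itself is unaffected since shifting $\lambda$ does not change $U \simeq V_{(k-1)(n-1,-1,\dots,-1)}$, and the normalization condition $v \mapsto v\otimes u_0 + \cdots$ is preserved because the $\det^m$-twist acts by a scalar on each weight space and fixes the highest-weight vector. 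Hence the operator $\phi^{(k)}_{\lambda + m^n}$ is literally the $\det^m$-twist of $\phi^{(k)}_{\lambda}$.

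The next step is to track what this twist does to the branching decomposition used to define $a_{\lambda,\mu}(q)$. The $U_q(\mathfrak{gl}_{n-1})$-decomposition $V_{\lambda + (k-1)\rho_n}|_{U_q(\mathfrak{gl}_{n-1})} = \bigoplus_{\bar\mu \preceq \lambda + (k-1)\rho_n} V_{\bar\mu}$ becomes, after the $\det^m$-twist, $\bigoplus_{\bar\mu} V_{\bar\mu + m^{n-1}}$, since restricting $\det^m$ from $\mathfrak{gl}_n$ to $\mathfrak{gl}_{n-1}$ is $\det^m$ for $\mathfrak{gl}_{n-1}$. So the interlacing partner $\bar\mu$ of $\lambda + (k-1)\rho_n$ that contributes $\hat a_{\lambda,\bar\mu}(q)$ corresponds exactly to the interlacing partner $\bar\mu + m^{n-1}$ of $\lambda + m^n + (k-1)\rho_n$. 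The operator $(\Proj_{V_{\bar\mu}}\otimes \mathrm{Id})\circ \phi^{(k)}_{\lambda}|_{V_{\bar\mu}}$ and its twisted counterpart $(\Proj_{V_{\bar\mu + m^{n-1}}}\otimes \mathrm{Id})\circ \phi^{(k)}_{\lambda + m^n}|_{V_{\bar\mu + m^{n-1}}}$ differ only by the $\det^m$-twist, which is a scalar on each weight space; since the constant $\hat a$ is read off by comparing the action of the resulting intertwiner with $\phi^{(k)}_{\bar\mu - (k-1)\rho_{n-1}}$ (again up to the same twist), the scalars match: $\hat a_{\lambda + m^n, \bar\mu + m^{n-1}}(q) = \hat a_{\lambda, \bar\mu}(q)$. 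Finally, unwinding the reparametrization $\mu = \bar\mu - (k-1)\rho_{n-1} - (k-1)(1/2)^{n-1}$ shows $\bar\mu \mapsto \bar\mu + m^{n-1}$ corresponds to $\mu \mapsto \mu + m^{n-1}$, and the auxiliary vanishing condition $\lambda_{j+1} - \mu_j \leq k-1$ is obviously invariant under the simultaneous shift. Therefore $a_{\lambda + m^n, \mu + m^{n-1}}(q) = a_{\lambda,\mu}(q)$.

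The main obstacle I anticipate is purely bookkeeping rather than conceptual: one must be careful that all the shifts are compatible, i.e. that the identification $V_{\lambda + m^n}\otimes U \simeq \bigoplus_{\alpha}V_{\alpha + m^n}\otimes U$ is the $\det^m$-twist of $V_{\lambda}\otimes U \simeq \bigoplus_\alpha V_\alpha \otimes U$ with the \emph{same} choice of isomorphism, so that the normalization of the highest-weight image is preserved throughout the iteration. An alternative, more computational route — which may be cleaner to write up — is to bypass the representation theory entirely and instead invoke the (not-yet-proved but soon-to-be-established) identity $a_{\lambda,\mu}(q) = c_{\lambda,\mu}(q,q^k)$, observing that the right-hand side is manifestly shift invariant: the functions $\psi_{\lambda/\beta}$, $\Omega_{\beta/\mu}$, $d_\beta$ depend only on the differences of parts (equivalently on the conjugate partitions $\lambda', \mu', \beta'$, which are unchanged by adding a column of equal height), so $c_{\lambda + m^n, \mu + m^{n-1}}(q,q^k) = c_{\lambda,\mu}(q,q^k)$. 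I would present the representation-theoretic argument as the primary proof since it does not rely on the later identification, and remark that the symmetric-function side gives an independent check.
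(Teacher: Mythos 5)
Your proposal is correct and is essentially the paper's argument: the paper compresses the same observation by noting that the intertwiner and the multiplicity-one decomposition are determined by the $U_q(\mathfrak{sl}_n)$-module structure, and that $V_{\lambda+m^n}\cong V_\lambda$ as $U_q(\mathfrak{sl}_n)$-modules --- which is precisely the content of your $\det^m$-twist bookkeeping, since the twist only changes the action of the central element $q^{\epsilon_1+\dotsb+\epsilon_n}$. Your secondary remark (shift invariance of $c_{\lambda,\mu}$ on the symmetric-function side) is a fine independent check, and you are right to not use it as the proof, since the identification $a_{\lambda,\mu}=c_{\lambda,\mu}(q,q^k)$ comes later.
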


\begin{proof}
  The intertwining operator $\phi^{(k)}$, as well as the multiplicity one
  decomposition used in the proof of Proposition \ref{finbrrule}, is determined by the
  $U_q(\mathfrak{sl_n})$-module structure.  Indeed, $U_q(\mathfrak{gl_n})$
  differs only from $U_q(\mathfrak{sl_n})$ by the addition of the central
  element $q^{\epsilon_1 + \dotsb + \epsilon_n}$.  The result then follows
  easily from the observation that, as $U_q(\mathfrak{sl_n})$-modules,
  $V_{\lambda + m^n}$ is isomorphic to $V_{\lambda}$ for any partition
  $\lambda$ and any $m \in \C$.

\end{proof}
 
\begin{remark}
By the previous Lemma, to compute $a_{\lambda,\mu}(q)$ for $\lambda \in \mathcal P_+^{(n)}$, $\mu \in \mathcal P_+^{(n-1)}$, we may assume that $\lambda, \mu$ are partitions with $l(\lambda) = n$, $l(\mu) = n-1$.  We will make this assumption implicitly throughout the paper.
\end{remark}

 \begin{proposition} The branching coefficients satisfy the following formula:
\begin{equation*}
a_{\lambda, \mu}(q) = \sum_{\substack{\beta \preceq \lambda, l(\beta) \leq n-1 \\ \gamma \in \mathbb{Z}_{\geq 0}^{n-1} \\ \text{a partition}}} g(\gamma; q^{2}, q^{2k}) \psi_{\lambda/\beta}(q^{2}, q^{2k}) c_{-\gamma + (k-1)^{n-1}, \beta}^{\mu + (k-1)^{n-1}}(q^{2},q^{2k}).
\end{equation*}
\end{proposition}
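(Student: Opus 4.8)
The plan is to write the trace-function branching rule of Proposition~\ref{finbrrule} in two ways and equate coefficients. Combining Proposition~\ref{finbrrule} with Theorem~\ref{EKfin} --- used in both $n$ and $n-1$ variables to substitute $\Phi_\nu = P_\nu\cdot\Phi_0$ --- and dividing by $\Phi_0^{(n-1)}(x)$ yields the identity
\begin{equation*}
P_\lambda^{(n)}(x;q^2,q^{2k})\cdot\frac{\Phi_0^{(n)}(x)}{\Phi_0^{(n-1)}(x)} \;=\; (x_1\cdots x_{n-1})^{\frac{k-1}{2}}\sum_{\mu\subset\lambda} a_{\lambda,\mu}(q)\,x_n^{\rho(\lambda,\mu)}\,P_\mu^{(n-1)}(x;q^2,q^{2k}).
\end{equation*}
I will expand the left-hand side in the basis $\{P_\delta^{(n-1)}(x;q^2,q^{2k})\}$ of symmetric polynomials in $x_1,\dots,x_{n-1}$ (with coefficients Laurent polynomials in $x_n$), using in turn Proposition~\ref{tracezeroratio}, the Macdonald branching rule (Theorem~\ref{MDbr}), and the Pieri rule of Definition~\ref{mPieri}. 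The factor $(x_1\cdots x_{n-1})^{\frac{k-1}{2}}$ will appear as an overall factor on the left as well and cancel, so $a_{\lambda,\mu}(q)$ can then be read off as the coefficient of $P_\mu^{(n-1)}$.

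First, Proposition~\ref{tracezeroratio} together with \eqref{rhoeq} writes $\Phi_0^{(n)}(x)/\Phi_0^{(n-1)}(x)$ as $(x_1\cdots x_{n-1})^{\frac{k-1}{2}}x_n^{\frac{(k-1)(1-n)}{2}}$ times $\sum_{\ell\in\Z_{\ge 0}^{n-1}} g(\ell;q^2,q^{2k})\,x_n^{|\ell|}x_1^{-\ell_1}\cdots x_{n-1}^{-\ell_{n-1}}$, where the summand is indexed by an arbitrary composition $\ell$ using that $g(\cdot;q^2,q^{2k})$ of Definition~\ref{gfcn} is symmetric in its arguments. Collecting compositions into $S_{n-1}$-orbits rewrites this sum as $\sum_\gamma g(\gamma;q^2,q^{2k})\,x_n^{|\gamma|}\,m_{-\gamma}^{(n-1)}(x)$ over partitions $\gamma$ with $l(\gamma)\le n-1$, where $m_{-\gamma}^{(n-1)}$ denotes the monomial symmetric polynomial with exponent vector $(-\gamma_1,\dots,-\gamma_{n-1})$; moreover, since $t=q^k$, the factor $(q^{2-2k};q^2)_{\gamma_j}$ in $g$ vanishes once $\gamma_j\ge k$, so only partitions with parts $\le k-1$ survive --- in particular the sum is finite and $-\gamma+(k-1)^{n-1}$ has nonnegative entries. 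After cancelling the common factor $(x_1\cdots x_{n-1})^{\frac{k-1}{2}}$, expand $P_\lambda^{(n)}(x;q^2,q^{2k})$ by Theorem~\ref{MDbr}, introducing the coefficients $\psi_{\lambda/\beta}(q^2,q^{2k})$ for $\beta\preceq\lambda$, $l(\beta)\le n-1$, and reducing the left-hand side to a sum of terms proportional to $m_{-\gamma}^{(n-1)}(x)\,P_\beta^{(n-1)}(x;q^2,q^{2k})$. Finally, expand each such product by Definition~\ref{mPieri}, applied to the honest polynomial product $m_{-\gamma+(k-1)^{n-1}}^{(n-1)}(x)\,P_\beta^{(n-1)}(x;q^2,q^{2k}) = (x_1\cdots x_{n-1})^{k-1}\,m_{-\gamma}^{(n-1)}(x)\,P_\beta^{(n-1)}(x;q^2,q^{2k})$ and then dividing back by $(x_1\cdots x_{n-1})^{k-1}$: this introduces the coefficients $c^{\delta+(k-1)^{n-1}}_{-\gamma+(k-1)^{n-1},\,\beta}(q^2,q^{2k})$.

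Putting the three expansions together, the left-hand side becomes a finite sum over $\beta,\gamma,\delta$ of $x_n^{\,e}\,g(\gamma;q^2,q^{2k})\,\psi_{\lambda/\beta}(q^2,q^{2k})\,c^{\delta+(k-1)^{n-1}}_{-\gamma+(k-1)^{n-1},\,\beta}(q^2,q^{2k})\,P_\delta^{(n-1)}(x;q^2,q^{2k})$ with $e = |\lambda|-|\beta|+\tfrac{(k-1)(1-n)}{2}+|\gamma|$. The Pieri coefficient vanishes unless $|\delta|=|\beta|-|\gamma|$, so $e=|\lambda|-|\delta|+\tfrac{(k-1)(1-n)}{2}$ for every nonzero term and, for fixed $\delta=\mu$, all surviving terms carry the same power of $x_n$. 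Since the $P_\mu^{(n-1)}(x;q^2,q^{2k})$ are linearly independent over the Laurent polynomials in $x_n$, equating the coefficient of $P_\mu^{(n-1)}$ on the two sides forces the two powers of $x_n$ to agree --- a consistency check amounting to a degree count via \eqref{rhoeq} --- and gives
\begin{equation*}
a_{\lambda,\mu}(q) \;=\; \sum_{\substack{\beta\preceq\lambda,\ l(\beta)\le n-1\\ \gamma\in\Z_{\ge 0}^{n-1}\ \text{a partition}}} g(\gamma;q^2,q^{2k})\,\psi_{\lambda/\beta}(q^2,q^{2k})\,c^{\mu+(k-1)^{n-1}}_{-\gamma+(k-1)^{n-1},\,\beta}(q^2,q^{2k}),
\end{equation*}
as claimed. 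The one delicate point is the middle step: the negative-exponent monomial symmetric functions $m_{-\gamma}^{(n-1)}$ must be handled consistently inside the Pieri rule, and one must track the powers of $x_1\cdots x_{n-1}$ and of $x_n$ carefully enough to make the final coefficient comparison legitimate; the remaining ingredients are direct invocations of results already in hand.
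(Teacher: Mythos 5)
Your proposal is correct and follows essentially the same route as the paper: start from Proposition \ref{finbrrule} combined with Theorem \ref{EKfin}, expand $\Phi_0^{(n)}/\Phi_0^{(n-1)}$ via Proposition \ref{tracezeroratio}, apply the Macdonald branching rule (Theorem \ref{MDbr}) and the monomial Pieri rule (Definition \ref{mPieri}) after the shift by $(x_1\cdots x_{n-1})^{k-1}$, and equate coefficients in the $P^{(n-1)}$ basis. Your added observations — that $g(\gamma;q^2,q^{2k})$ kills parts $\gamma_j\ge k$ so the shifted monomial index is genuinely nonnegative, and the $x_n$-degree consistency — are finer points the paper leaves implicit, not a different argument.
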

 
 \begin{proof}
 Combining Theorem \ref{EKfin} with Proposition \ref{finbrrule} gives the following:
 \begin{equation*}
 \frac{\phi_{0}^{(n)}(x;q,q^{k})}{\phi_{0}^{(n-1)}(x;q,q^{k})} P_{\lambda}^{(n)}(x;q^{2},q^{2k}) = \sum_{\mu \subset \lambda} x_{n}^{\rho(\lambda, \mu)} a_{\lambda, \mu}(q) P_{\mu + (k-1)(\frac{1}{2})^{n-1}}^{(n-1)}(x;q^{2},q^{2k}).
 \end{equation*}
 We then use Theorem \ref{MDbr} to rewrite this as
 \begin{multline} \label{starteqn}
  \frac{\phi_{0}^{(n)}(x;q,q^{k})}{\phi_{0}^{(n-1)}(x;q,q^{k})} \sum_{\substack{\mu \preceq \lambda \\ l(\mu) \leq n-1}} x_{n}^{|\lambda - \mu|} \psi_{\lambda/\mu}(q^{2},q^{2k}) P_{\mu}^{(n-1)}(x;q^{2},q^{2k})\\ = \sum_{\mu \subset \lambda} x_{n}^{\rho(\lambda, \mu)} a_{\lambda, \mu}(q) P_{\mu + (k-1)(\frac{1}{2})^{n-1}}^{(n-1)}(x;q^{2},q^{2k}).
 \end{multline}
 Now note from Proposition \ref{tracezeroratio}, we have
 \begin{multline*}
 \frac{\phi_{0}^{(n)}(x;q,q^{k})}{\phi_{0}^{(n-1)}(x;q,q^{k})} 
 = x^{(k-1)(\rho_{n}-\rho_{n-1})} \times \\
\times \sum_{\gamma \in \mathbb{Z}_{\geq 0}^{n-1}} q^{2k(\sum_{i} \gamma_{i})} \frac{(q^{-2(k-1)};q^{2})_{\gamma_{1}} \cdots (q^{-2(k-1)};q^{2})_{\gamma_{n-1}}}{(q^{2};q^{2})_{\gamma_{1}} \cdots (q^{2};q^{2})_{\gamma_{n-1}}} x_{n}^{(\sum_{i} \gamma_{i})} m_{-\gamma}(x_{1}, \dots, x_{n-1}),
 \end{multline*}
where the sum is over $\gamma = (\gamma_{1}, \dots, \gamma_{n-1})$ a partition.  Now recall that $(k-1)(\rho_{n} - \rho_{n-1}) = (k-1)(\frac{1}{2}, \frac{1}{2}, \dots, \frac{1}{2}, \frac{1-n}{2}) \in \mathbb{Z}^{n}$, so we have
\begin{equation*}
\frac{\phi_{0}^{(n)}(x;q,q^{k})}{\phi_{0}^{(n-1)}(x;q,q^{k})} = \sum_{\substack{\gamma \in \mathbb{Z}_{\geq 0}^{n-1}\\ \text{a partition}}}  x_{n}^{(k-1)(\frac{1-n}{2}) + |\gamma|} g(\gamma;q^{2},q^{2k}) m_{-\gamma + (k-1)(\frac{1}{2}, \dots, \frac{1}{2})}(x_{1}, \dots, x_{n-1}),
\end{equation*}
where we have used Definition (\ref{gfcn}).  

 We use the previous equation, along with (\ref{starteqn}), and multiply both sides by the monomial $(x_{1} \cdots x_{n-1})^{(k-1)(\frac{1}{2}, \dots, \frac{1}{2})}$ to obtain the equation:

  \begin{multline*}
 \sum_{\substack{\gamma \in \mathbb{Z}_{\geq 0}^{n-1} \\ \text{a partition} \\ \mu \preceq \lambda \\ l(\mu) \leq n-1}}  x_{n}^{|\lambda - \mu|+ (k-1)(\frac{1-n}{2}) + |\gamma|} g(\gamma;q^{2},q^{2k}) \psi_{\lambda/\mu}(q^{2},q^{2k})\times \\ \times m_{-\gamma + (k-1)^{n-1}}(x_{1}, \dots, x_{n-1})    P_{\mu}^{(n-1)}(x;q^{2},q^{2k}) \\= \sum_{\mu \subset \lambda} x_{n}^{\rho(\lambda, \mu)} a_{\lambda,\mu}(q) P_{\mu+ (k-1)^{n-1}}^{(n-1)}(x;q^{2},q^{2k}).
 \end{multline*}
Next we use Definition (\ref{mPieri}) to rewrite this as
\begin{multline*}
 \sum_{\substack{\gamma \in \mathbb{Z}_{\geq 0}^{n-1} \\ \text{a partition} \\ \mu \preceq \lambda, l(\mu) \leq n-1 \\ \delta \text{ a partition}}}  x_{n}^{|\lambda - \mu|+ (k-1)(\frac{1-n}{2}) + |\gamma|} g(\gamma;q^{2},q^{2k}) \psi_{\lambda/\mu}(q^{2},q^{2k}) \times \\ \times c_{-\gamma + (k-1)^{n-1}, \mu}^{\delta}(q^{2}, q^{2k}) P_{\delta}^{(n-1)}(x;q^{2},q^{2k}) \\
 = \sum_{\mu \subset \lambda} x_{n}^{\rho(\lambda, \mu)} a_{\lambda, \mu}(q) P_{\mu+ (k-1)^{n-1}}^{(n-1)}(x;q^{2},q^{2k}).
\end{multline*}
Since both LHS and RHS are expansions in the Macdonald polynomial basis, the corresponding coefficients must be equal.  That is,
\begin{equation*}
a_{\lambda, \mu}(q) = \sum_{\substack{\beta \preceq \lambda, l(\beta) \leq n-1 \\ \gamma \in \mathbb{Z}_{\geq 0}^{n-1} \\ \text{a partition}}} g(\gamma; q^{2}, q^{2k}) \psi_{\lambda/\beta}(q^{2}, q^{2k}) c_{-\gamma + (k-1)^{n-1}, \beta}^{\mu + (k-1)^{n-1}}(q^{2},q^{2k}),
\end{equation*}
as desired.
 \end{proof}

\begin{proposition}
Let $\mu \subset \lambda$ with $l(\mu) \leq n-1$.  Then we have
\begin{equation*}
a_{\lambda, \mu}(q) = \sum_{\substack{\beta \preceq \lambda, l(\beta) \leq n-1 \\ l(\gamma) \leq n-1}} g(\gamma; q^{2}, q^{2k}) \psi_{\lambda/\beta}(q^{2}, q^{2k})  \frac{d_{\mu}(q^{2},q^{2k})}{d_{\beta}(q^{2},q^{2k})} c_{\gamma, \mu}^{\beta}(q^{2},q^{2k}).
\end{equation*}
\end{proposition}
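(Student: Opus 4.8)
The plan is to reduce this to a single symmetry of the Macdonald Pieri coefficients, namely
\[
c_{-\gamma+(k-1)^{n-1},\,\beta}^{\,\mu+(k-1)^{n-1}}(q^{2},q^{2k})
\;=\; \frac{d_{\mu}(q^{2},q^{2k})}{d_{\beta}(q^{2},q^{2k})}\,c^{\,\beta}_{\gamma\mu}(q^{2},q^{2k}),
\]
after which one substitutes this identity term by term into the formula of the previous proposition; the two index sets agree because ``$\gamma\in\mathbb{Z}_{\geq 0}^{n-1}$ a partition'' is exactly ``$l(\gamma)\leq n-1$''. Throughout, all polynomials live in the variables $x_{1},\dots,x_{n-1}$, and I abbreviate $\langle\cdot,\cdot\rangle=\langle\cdot,\cdot\rangle_{n-1}$. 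One may assume every part of $\gamma$ is at most $k-1$, since otherwise the prefactor $g(\gamma;q^{2},q^{2k})$ multiplying that term vanishes (the factor $(q^{-2(k-1)};q^{2})_{\gamma_{i}}$ is $0$ as soon as $\gamma_{i}\geq k$); in this regime $-\gamma+(k-1)^{n-1}$ is a genuine partition, so all indices appearing below are partitions.

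First I would strip off the common shift by $(k-1)^{n-1}$. Since $m_{-\gamma+(k-1)^{n-1}}=(x_{1}\cdots x_{n-1})^{k-1}m_{-\gamma}$ and $P^{(n-1)}_{\mu+(k-1)^{n-1}}=(x_{1}\cdots x_{n-1})^{k-1}P^{(n-1)}_{\mu}$, comparing the Pieri expansions of $m_{-\gamma+(k-1)^{n-1}}P^{(n-1)}_{\beta}$ and of $m_{-\gamma}P^{(n-1)}_{\beta}$ gives $c_{-\gamma+(k-1)^{n-1},\,\beta}^{\,\mu+(k-1)^{n-1}}=c^{\,\mu}_{-\gamma,\,\beta}$, where the right side is the coefficient of $P^{(n-1)}_{\mu}$ in $m_{-\gamma}P^{(n-1)}_{\beta}$ in the ring of symmetric Laurent polynomials; similarly $d_{\mu+(k-1)^{n-1}}=d_{\mu}$, because multiplication by $(x_{1}\cdots x_{n-1})^{k-1}$ is an isometry for $\langle\cdot,\cdot\rangle$ (on the torus this monomial times its inverse is $1$). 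So it suffices to prove $c^{\,\mu}_{-\gamma,\beta}=\tfrac{d_{\mu}}{d_{\beta}}c^{\,\beta}_{\gamma\mu}$. For this I would invoke the identity from the Background that expresses Pieri coefficients through $\langle\cdot,\cdot\rangle$, namely $c^{\,\mu}_{-\gamma,\beta}=d_{\mu}\langle m_{-\gamma}P^{(n-1)}_{\beta},\,P^{(n-1)}_{\mu}\rangle$, write it via \eqref{integip} as an integral over $T_{n-1}$ against $\tilde{\Delta}_{S}(x;q^{2},q^{2k})$, and perform the measure-preserving substitution $x\mapsto x^{-1}$. Under this substitution $\tilde{\Delta}_{S}$ is invariant (it is a product over ordered pairs $i\neq j$ of factors depending only on $x_{i}/x_{j}$, which inversion merely permutes) and $m_{-\gamma}(x^{-1})=m_{\gamma}(x)$, so the integral becomes $d_{\mu}\langle m_{\gamma}P^{(n-1)}_{\mu},\,P^{(n-1)}_{\beta}\rangle=\tfrac{d_{\mu}}{d_{\beta}}c^{\,\beta}_{\gamma\mu}$. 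In other words, multiplication by $m_{\gamma}$ and by $m_{-\gamma}$ are mutually adjoint for $\langle\cdot,\cdot\rangle$, and this adjointness interchanges $\mu$ and $\beta$; combining the two steps yields the stated formula.

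I expect the only real care to be the bookkeeping with the shift: one must check that \eqref{integip} and the Pieri--inner-product identity continue to hold for the weakly decreasing integer indices that arise once $(x_{1}\cdots x_{n-1})^{k-1}$ has been pulled out, rather than only for partitions. This is routine — multiplying back by a sufficiently high power of $x_{1}\cdots x_{n-1}$ reduces it to the partition case — and I do not foresee any genuine obstacle beyond it.
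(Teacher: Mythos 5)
Your proposal is correct and follows essentially the same route as the paper: it represents the shifted Pieri coefficient via the torus inner product, removes the $(k-1)^{n-1}$ shift (you do this at the level of the Pieri expansion, the paper inside the inner product — an immaterial difference), and uses the adjointness of multiplication by $m_{-\gamma}$ and $m_{\gamma}$ under $\langle\cdot,\cdot\rangle_{n-1}$ via the substitution $x \mapsto x^{-1}$, together with $d_{\mu+(k-1)^{n-1}} = d_{\mu}$. Your extra remarks (vanishing of $g(\gamma;q^{2},q^{2k})$ when some $\gamma_{i}\geq k$, and the bookkeeping for non-partition indices) only make explicit what the paper leaves implicit.
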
 
 \begin{proof}
Using standard facts about integration over $\mathbb{T}_{n}$, we have
\begin{multline*}
c_{-\gamma + (k-1)^{n-1}, \beta}^{\mu + (k-1)^{n-1}}(q^{2},q^{2k})  \\ = \langle m_{-\gamma + (k-1)^{n-1}}(x) P_{\beta}(x; q^{2},q^{2k}), P_{\mu + (k-1)^{n-1}}(x;q^{2},q^{2k}) \rangle d_{\mu + (k-1)^{n-1}}(q^{2},q^{2k}) \\ 
= \langle m_{-\gamma}(x) P_{\beta}(x; q^{2},q^{2k}), P_{\mu}(x;q^{2},q^{2k}) \rangle d_{\mu + (k-1)^{n-1}}(q^{2},q^{2k}) \\
= d_{\mu + (k-1)^{n-1}}(q^{2},q^{2k}) \langle m_{\gamma}(x) P_{\mu}(x;q^{2},q^{2k}) ,  P_{\beta}(x; q^{2},q^{2k})\rangle \\
= \frac{d_{\mu}(q^{2},q^{2k}) }{d_{\beta}(q^{2},q^{2k})} c_{\gamma, \mu}^{\beta}(q^{2}, q^{2k}),
\end{multline*}
where we have used $d_{\mu + (k-1)^{n-1}}(q^{2}, q^{2k}) = d_{\mu}(q^{2}, q^{2k})$, which follows from the definition of $\langle \cdot, \cdot \rangle$. Combining this with the previous theorem gives the result.
 \end{proof}

 We are now prepared to provide a proof of Theorem \ref{qtthm}, mentioned in the introduction to this paper.  The proof relies on the previous propositions proved in this section.

 \begin{proof}[Proof of Theorem \ref{qtthm}]
 By the previous proposition, we have
\begin{equation*}
a_{\lambda, \mu}(q) = d_{\mu}(q^{2},q^{2k}) \sum_{\substack{\beta \preceq \lambda, l(\beta) \leq n-1 \\ l(\gamma) \leq n-1}} g(\gamma; q^{2}, q^{2k}) \psi_{\lambda/\beta}(q^{2}, q^{2k}) \frac{1}{d_{\beta}(q^{2},q^{2k})} c^{\beta}_{\gamma, \mu}(q^{2}, q^{2k}).
\end{equation*}
Now, note that for fixed $\beta$, we have
\begin{multline*}
\sum_{l(\gamma) \leq n-1} g(\gamma; q^{2},q^{2k}) c^{\beta}_{\gamma, \mu}(q^{2}, q^{2k}) \\= \sum_{l(\gamma) \leq n-1}q^{2k|\gamma|} \frac{(q^{-2k}q^{2};q^{2})_{\gamma_{1}} \cdots (q^{-2k}q^{2};q^{2})_{\gamma_{n-1}}}{(q^{2};q^{2})_{\gamma_{1}} \cdots (q^{2};q^{2})_{\gamma_{n-1}}} c_{\gamma, \mu}^{\beta}(q^{2}, q^{2k}) \\
= \sum_{l(\gamma) \leq n-1}q^{2k|\gamma|} \frac{(q^{-2k}q^{2};q^{2})_{\gamma_{1}} \cdots (q^{-2k}q^{2};q^{2})_{\gamma_{n-1}}}{(q^{2};q^{2})_{\gamma_{1}} \cdots (q^{2};q^{2})_{\gamma_{n-1}}} \langle m_{\gamma}^{(n-1)}(x) P_{\mu}^{(n-1)}(x;q^{2}, q^{2k}), Q_{\beta}^{(n-1)}(x;q^{2}, q^{2k}) \rangle' \\
= \Big\langle \Big( \sum_{l(\gamma) \leq n-1}q^{2k|\gamma|} \frac{(q^{-2k}q^{2};q^{2})_{\gamma_{1}} \cdots (q^{-2k}q^{2};q^{2})_{\gamma_{n-1}}}{(q^{2};q^{2})_{\gamma_{1}} \cdots (q^{2};q^{2})_{\gamma_{n-1}}} m_{\gamma}^{(n-1)}(x) \Big)P_{\mu}^{(n-1)}(x;q^{2}, q^{2k}), Q_{\beta}^{(n-1)}(x;q^{2}, q^{2k}) \Big\rangle'.
\end{multline*} 
Now recall the following identity \cite[p314]{Mac}: 
\begin{equation*}
g_{n}(x;q^{2}, t^{2}) = \sum_{|\mu| = n} \frac{(t^{2};q^{2})_{\mu}}{(q^{2};q^{2})_{\mu}} m_{\mu}(x).
\end{equation*}
Using this, we may write the previous equation as
\begin{multline*}
\sum_{l(\gamma) \leq n-1} g(\gamma; q^{2},q^{2k}) c^{\beta}_{\gamma, \bar{\mu}}(q^{2}, q^{2k}) \\= \Big\langle \Big( \sum_{r \geq 0}q^{2kr} g_{r}^{(n-1)}(x;q^{2}, q^{-2k}q^{2}) \Big)P_{\mu}^{(n-1)}(x;q^{2}, q^{2k}), Q_{\beta}^{(n-1)}(x;q^{2}, q^{2k}) \Big\rangle'.
\end{multline*} 
Also note \cite[p311]{Mac} that we have the generating function identity (for arbitrary $q,t,y$):
 \begin{equation*}
 \sum_{n \geq 0} g_{n}(x;q,t) y^{n} = \prod_{i \geq 1} \frac{(tx_{i}y;q)_{\infty}}{(x_{i}y;q)_{\infty}};
  \end{equation*}
thus we have
 \begin{multline*}
\sum_{l(\gamma) \leq n-1} g(\gamma; q^{2},q^{2k}) c^{\beta}_{\gamma, \mu}(q^{2}, q^{2k}) \\= \Big\langle \prod_{i \geq 1} \frac{(q^{2}x_{i};q^{2})_{\infty}}{(q^{2k}x_{i};q^{2})_{\infty}} P_{\mu}^{(n-1)}(x;q^{2}, q^{2k}), Q_{\beta}^{(n-1)}(x;q^{2}, q^{2k}) \Big\rangle'.
\end{multline*} 
Combining this with the original sum yields
 \begin{multline*}
a_{\lambda, \mu}(q)  = d_{\mu}(q^{2},q^{2k}) \times \\ \times \sum_{\substack{\beta \preceq \lambda \\l(\beta) \leq n-1 }}  \frac{\psi_{\lambda/\beta}(q^{2}, q^{2k})}{d_{\beta}(q^{2}, q^{2k})} \Big\langle \prod_{i \geq 1} \frac{(q^{2}x_{i};q^{2})_{\infty}}{(q^{2k}x_{i};q^{2})_{\infty}} P_{\mu}^{(n-1)}(x;q^{2}, q^{2k}), Q_{\beta}^{(n-1)}(x;q^{2}, q^{2k}) \Big\rangle' \\
= d_{\mu}(q^{2},q^{2k}) \sum_{\substack{\beta \preceq \lambda \\ l(\beta) \leq n-1}}  \frac{\psi_{\lambda/\beta}(q^{2}, q^{2k})}{d_{\beta}(q^{2}, q^{2k})} \Omega_{\beta/ \mu}(q^{2}, q^{2k})
\end{multline*}
 by definition of $\Omega_{\beta/\mu}(q^{2}, q^{2k})$.  But this is exactly equal to $c_{\lambda,\mu}(q,q^{k})$ as defined in (\eqref{eq:branchcoeff}).
\end{proof}

\begin{remarks}
The coefficients $c_{\lambda,\mu}(q,t)$ do not appear to factor nicely at the $q$-level, due to the restriction on length in the sum.  For example, for $\lambda=(2,1)$ and
$\mu=(1)$ one obtains $(1-t)(1-qt^2)(1-q-q^2+t)/(1-qt)$, and the term $(1-q-q^{2}+t)$ cannot be expressed as a product of $(1-q^{i}t^{j})$.

\end{remarks}

\section{The $q \to 0$ limit}  
We will look at the $q \rightarrow 0$ limit of the coefficients $c_{\lambda,\mu}(q,t)$.  We find that the formula has a nice product form, in terms of certain $p$-adic counts.  The simplification of these coefficients at $q=0$ may be related to the crystal basis structure of the Gelfand-Tsetlin basis, although we have not investigated a direct link.

We note that the parameters $q, t$ are linked in the finite-dimensional case since $t = q^{k}$ and $k \in \mathbb{N}$, so we cannot take $q \rightarrow 0$ without having $t \rightarrow 0$ as well.  In this section, we will simply treat $t$ as a formal variable and in Section 5, we will relate it to the representation theory by using Verma modules.  
 
 The goal of this section is to prove Theorems \ref{trestr} and \ref{thm:gt_padic} mentioned in the introduction.
 
 \begin{definition}
We let $c_{\lambda, \mu}(t)$ denote $\displaystyle \lim_{q \rightarrow 0} c_{\lambda, \mu}(q,t)$.  
\end{definition}

 \begin{theorem}
 Let $\lambda$ be a partition of length $n$, and $\mu \subset \lambda, \mu \in \mathcal{P}_{+}$.  Then retaining the notation of the previous sections, we have
 \begin{equation*}
 c_{\lambda, \mu}(t) = \frac{b_{\mu}(t^{2})}{b_{\lambda}(t^{2})} \sum_{\substack{\beta \preceq \lambda \\ l(\beta) \leq n-1}} \phi_{\lambda/ \beta}(t^{2}) t^{2|\beta/\mu|} sk_{\beta/\mu}(t^{2}).
 \end{equation*}
 \end{theorem}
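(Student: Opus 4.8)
The plan is to deduce this from Theorem~\ref{qtthm} by sending $q\to 0$ in \eqref{eq:branchcoeff} and replacing each factor by its Hall--Littlewood degeneration as recorded in the Background section. Since the sum over $\beta$ there is finite and each of $d_\mu(q^2,t^2)$, $\psi_{\lambda/\beta}(q^2,t^2)$, $d_\beta(q^2,t^2)^{-1}$, $\Omega_{\beta/\mu}(q^2,t^2)$ has a finite limit as $q\to 0$ with $t$ kept formal (in particular $d_\beta(t^2)=\tfrac{1}{(1-t^2)^{n-1}}\prod_{i\ge 0}\phi_{m_i(\beta)}(t^2)\neq 0$ for generic $t$, so no denominator degenerates), I would pass to the limit termwise to obtain
\[
c_{\lambda,\mu}(t)=d_\mu(t^2)\sum_{\mu\subseteq\beta\preceq\lambda}\frac{\psi_{\lambda/\beta}(t^2)}{d_\beta(t^2)}\,\Omega_{\beta/\mu}(t^2),
\]
after which it remains only to rewrite the three ingredients.

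The main step (the only one that is not bookkeeping) is to identify the degenerate $\Omega$. By definition $\Omega_{\beta/\mu}(q^2,t^2)$ is the coefficient of $P_\beta^{(n-1)}(x;q^2,t^2)$ in $P_\mu^{(n-1)}(x;q^2,t^2)\prod_{i=1}^{n-1}\tfrac{(q^2x_i;q^2)_\infty}{(t^2x_i;q^2)_\infty}$, and as $q\to 0$ the kernel collapses to $\prod_{i=1}^{n-1}(1-t^2x_i)^{-1}=\sum_{r\ge 0}t^{2r}s_r^{(n-1)}(x)$. Hence $\Omega_{\beta/\mu}(t^2)$ is the coefficient of $P_\beta^{(n-1)}(x;t^2)$ in $\sum_{r\ge 0}t^{2r}P_\mu^{(n-1)}(x;t^2)s_r^{(n-1)}(x)$, which by the Pieri rule of Theorem~\ref{Lauve-K} equals $t^{2|\beta/\mu|}sk_{\beta/\mu}(t^2)$ when $\mu\subseteq\beta$ and vanishes otherwise; so the constraint $\mu\subseteq\beta$ may be dropped and the sum becomes the sum over $\beta\preceq\lambda$ with $l(\beta)\le n-1$, as in the statement. (Equivalently this follows by specializing the plethystic identity $\Omega_{\beta/\mu}(q,t)=t^{|\beta/\mu|}Q_{\beta/\mu}\!\bigl(\tfrac{1-q/t}{1-t}\bigr)$ from the Remark at $q=0$.)

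For the remaining two factors I would use the $q\to 0$ case of Proposition~\ref{phipsi}, namely $\psi_{\lambda/\beta}(t^2)=\tfrac{b_\beta(t^2)}{b_\lambda(t^2)}\phi_{\lambda/\beta}(t^2)$, together with the observation that under the standing convention $l(\mu)=n-1$, every $\beta$ with $\mu\subseteq\beta$ also has $l(\beta)=n-1$, so $m_0(\mu)=m_0(\beta)=0$ and the Background formulas for $b$ and $d$ give $d_\mu(t^2)/d_\beta(t^2)=b_\mu(t^2)/b_\beta(t^2)$ (the ratio $d_\alpha/b_\alpha$ depending only on $m_0(\alpha)$ and the number of variables). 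Substituting these three facts into the displayed sum and cancelling $b_\beta(t^2)$ in each term produces exactly
\[
c_{\lambda,\mu}(t)=\frac{b_\mu(t^2)}{b_\lambda(t^2)}\sum_{\substack{\beta\preceq\lambda\\ l(\beta)\le n-1}}\phi_{\lambda/\beta}(t^2)\,t^{2|\beta/\mu|}sk_{\beta/\mu}(t^2),
\]
which is the assertion. The only genuinely non-routine ingredient is recognizing the collapsed $\Omega$-kernel as the generating series $\sum_r t^{2r}s_r^{(n-1)}$ and feeding it into the Hall--Littlewood Pieri rule; the length bookkeeping that makes the $d$/$b$ ratios cancel and the interchange of limit with the finite sum are straightforward, so I expect no substantial obstacle.
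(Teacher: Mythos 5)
Your proposal is correct and follows essentially the same route as the paper: pass to the $q\to 0$ limit termwise in \eqref{eq:branchcoeff}, identify $\lim_{q\to 0}\Omega_{\beta/\mu}(q^2,t^2)=t^{2|\beta/\mu|}sk_{\beta/\mu}(t^2)$, and then use the $q\to 0$ case of Proposition \ref{phipsi} together with the $d$-versus-$b$ comparison to produce the prefactor $b_\mu(t^2)/b_\lambda(t^2)$. The only (cosmetic) difference is that you justify the $\Omega$-degeneration via the collapsed kernel $\prod_i(1-t^2x_i)^{-1}=\sum_r t^{2r}s_r^{(n-1)}(x)$ and the Pieri rule of Theorem \ref{Lauve-K}, whereas the paper quotes the equivalent plethystic limit $\lim_{q\to 0}Q_{\beta/\mu}\bigl(\tfrac{1-q}{1-t}\bigr)=sk_{\beta/\mu}(t)$; your handling of the length bookkeeping (so that $d_\mu/d_\beta=b_\mu/b_\beta$) is, if anything, slightly more careful than the paper's.
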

 
 \begin{proof}
 We use Theorem \ref{qtthm}, the functions there admit the limit $q \rightarrow 0$.
 We also use that
 \begin{equation*}
\Omega_{\beta / \mu}(q,t)= Q_{\beta / \mu} \Big( \frac{t-q}{1-t}\Big) = t^{|\beta/\mu|} Q_{\beta/\mu} \Big( \frac{1-q}{1-t} \Big)
 \end{equation*}
 and
 \begin{equation*}
 \lim_{q \rightarrow 0} Q_{\beta/\mu} \Big( \frac{1-q}{1-t} \Big) = sk_{\beta/\mu}(t),
 \end{equation*}
 where the skew Macdonald polynomials are taken with respect to the parameters $(q,t)$.  This gives the following
  \begin{equation*}
 c_{\lambda, \mu}(t) = d_{\mu}(t^{2}) \sum_{\substack{\beta \preceq \lambda \\ l(\beta) \leq n-1}} \frac{\psi_{\lambda/ \beta}(t^{2})}{d_{\beta}(t^{2})} t^{2|\beta/\mu|} sk_{\beta/\mu}(t^{2}).
 \end{equation*}
 Finally, one notes that
 \begin{equation*}
 d_{\beta}(t^{2}) = b_{\beta}(t^{2}) \text{ and } d_{\mu}(t^{2}) = b_{\mu}(t^{2}),
  \end{equation*}
 and by the $q \rightarrow 0$ limit of Proposition \ref{phipsi} we have
\begin{equation*}
\phi_{\lambda/\beta}(t)/b_{\lambda}(t) = \psi_{\lambda/\beta}(t)/b_{\beta}(t);
\end{equation*}
using this in the previous equation gives the result.

 \end{proof}
 
Our next goal is to obtain a nice factorized product form for $c_{\lambda,
\mu}(t)$.  We use a $q\to 0$ specialization of Rains' $q$-Pfaff-Saalsch\"{u}tz
formula: 

\begin{theorem}[{\cite[Corollary 4.9]{R}}] \label{q_pfaff}
Let $\mu \subset \lambda$ be partitions, then for arbitrary parameters $a, b,
c$ we have the following identity:
  \begin{equation*}
    \sum_{\beta} \frac{(a)_\beta}{(c)_\beta} Q_{\lambda /
    \beta}\biggl(\frac{a-b}{1-t}\biggr)Q_{\beta/\mu}\biggl(\frac{b-c}{1-t}\biggr)
    = \frac{(a)_\mu (b)_{\lambda}}{(b)_\mu (c)_\lambda}Q_{\lambda/\mu}\biggl(\frac{a-c}{1-t}\biggr).  
  \end{equation*}
\end{theorem}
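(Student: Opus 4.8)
The plan is to recognize this as a Macdonald-polynomial lift of the classical $q$-Pfaff-Saalsch\"{u}tz (${}_3\phi_2$) summation and reduce to it. The first step is to dispose of the \emph{unweighted} version, in which the factors $(a)_\beta/(c)_\beta$ and $(a)_\mu(b)_\lambda/(b)_\mu(c)_\lambda$ are absent: this is just the addition (coproduct) rule for skew Macdonald $Q$-functions, $Q_{\lambda/\mu}(X+Y)=\sum_\beta Q_{\lambda/\beta}(X)Q_{\beta/\mu}(Y)$ for formal alphabets $X,Y$, applied to the $\lambda$-ring identity $\frac{a-b}{1-t}+\frac{b-c}{1-t}=\frac{a-c}{1-t}$. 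So all the content of the theorem lies in tracking the twist by the weight $(a)_\beta/(c)_\beta$.

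For that I would use two facts. Each of $\frac{a-b}{1-t}$, $\frac{b-c}{1-t}$, $\frac{a-c}{1-t}$ is a ``rank one'' plethystic alphabet, i.e.\ a scalar multiple (in the $\lambda$-ring sense) of $\frac{1-u}{1-t}$; and for such arguments the skew functions $Q_{\lambda/\beta}$ evaluate explicitly, obtained by expanding along Gelfand-Tsetlin chains using the branching rule (Theorem \ref{MDbr}), each step being ``one row.'' Moreover $(a)_\lambda$ is, up to explicit $a$-independent factors, the principal-specialization factor appearing in the numerator of $P_\lambda\bigl(\tfrac{1-a}{1-t}\bigr)$. Substituting these evaluations and the definition of $(a)_\bullet$ into the left-hand side and reorganizing the iterated sum over the chain $\mu\subseteq\beta\subseteq\lambda$, one arranges that each innermost ``one-row'' sum is a terminating, balanced ${}_3\phi_2$; applying the classical $q$-Pfaff-Saalsch\"{u}tz summation to each and assembling the resulting products yields the right-hand side. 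There are two natural variants: one may instead obtain the identity as a principal specialization of the full skew-Cauchy identity for Macdonald polynomials, replacing the truncation parameter of a geometric progression by a free variable $a$ and analytically continuing (legitimate since both sides are rational in $a,b,c$ and agree for infinitely many specializations); or one may run a direct induction on $|\lambda/\mu|$ using the Pieri rule, the base case $\lambda=\mu$ giving $(a)_\mu/(c)_\mu$ on both sides and each inductive step again reducing to the one-row ${}_3\phi_2$.

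The hard part is not any single manipulation but the bookkeeping: one must verify that the principal-specialization and plethystic normalization factors combine to produce \emph{exactly} the weight $(a)_\beta/(c)_\beta$ under the sum and exactly $(a)_\mu(b)_\lambda/(b)_\mu(c)_\lambda$ outside it, with no residual powers of $q$, $t$, or the parameters, and---for the skew-Cauchy route---one must justify the analytic continuation from integer rank to generic $a,b,c$ by a rationality argument. Since the statement is precisely \cite[Corollary 4.9]{R}, the cleanest option here is simply to invoke that reference; the above is the route one would take to reprove it from scratch.
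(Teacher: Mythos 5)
The paper does not actually prove this statement---it is imported verbatim from Rains and justified only by the citation [R, Corollary 4.9]---so your concluding recommendation to simply invoke that reference is exactly what the paper does. Your supplementary from-scratch sketch (coproduct/addition formula for skew $Q$'s for the unweighted case, then tracking the $(a)_\beta/(c)_\beta$ weight via one-row $q$-Pfaff-Saalsch\"utz steps or via skew-Cauchy plus rational continuation in $a,b,c$) is a plausible outline but is not needed for, and goes beyond, what the paper itself supplies.
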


\begin{proposition}\label{hl_pfaff_saal}
Let $\lambda$ be a partition of length $n$ and let $\mu \leq \lambda$ with $l(\mu) = n-1$.  Then 
\begin{enumerate}
  \item 
    \begin{equation*}
      \sum_{\beta \preceq \lambda} \phi_{\lambda/ \beta}(t^{2}) t^{2|\beta/\mu|} sk_{\beta/\mu}(t^{2}) = sk_{\lambda/\mu}(t^2)
    \end{equation*}
  \item 
    \begin{equation*}
      \sum_{\substack{\beta \preceq \lambda \\ l(\beta) \leq n-1}} \phi_{\lambda/ \beta}(t^{2}) t^{2|\beta/\mu|} sk_{\beta/\mu}(t^{2}) = (1-t^2) \cdot sk_{\lambda/\mu}(t^2)
    \end{equation*}
\end{enumerate}
\end{proposition}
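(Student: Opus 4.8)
The plan is to reduce both statements to identities for skew Hall--Littlewood $Q$-functions evaluated at plethystic alphabets, and then to derive (1) from the coproduct (addition) formula for skew symmetric functions and (2) from (1) by a ``remove a column'' manipulation. First I would translate: working with Hall--Littlewood $Q$-functions in the parameter $s := t^2$ (i.e. Macdonald $Q$ at $q=0$), the Remark following Proposition \ref{phipsi}, together with the identities $\lim_{q\to0}\Omega_{\beta/\mu}(q,t)=t^{|\beta/\mu|}sk_{\beta/\mu}(t)$ and $\lim_{q\to0}Q_{\beta/\mu}\!\left(\tfrac{1-q}{1-t}\right)=sk_{\beta/\mu}(t)$ from the proof of the preceding theorem, give
\[
\phi_{\lambda/\beta}(s)=Q_{\lambda/\beta}[1],\qquad s^{|\beta/\mu|}sk_{\beta/\mu}(s)=Q_{\beta/\mu}\!\left[\tfrac{s}{1-s}\right],\qquad sk_{\lambda/\mu}(s)=Q_{\lambda/\mu}\!\left[\tfrac{1}{1-s}\right],
\]
where $\tfrac{s}{1-s}$ and $\tfrac{1}{1-s}$ denote the alphabets $\{s,s^2,\dotsc\}$ and $\{1,s,s^2,\dotsc\}$. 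Since $Q_{\lambda/\beta}[1]=\phi_{\lambda/\beta}(s)=0$ unless $\lambda/\beta$ is a horizontal strip, the sum in (1) is automatically supported on $\mu\subseteq\beta\preceq\lambda$.

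For part (1), I would observe that $\tfrac{1}{1-s}=1+\tfrac{s}{1-s}$ as alphabets, so the coproduct formula $Q_{\lambda/\mu}[X+Y]=\sum_\beta Q_{\lambda/\beta}[X]\,Q_{\beta/\mu}[Y]$ --- a degenerate (Chu--Vandermonde level) limit of Rains' Theorem \ref{q_pfaff} --- gives $\sum_\beta Q_{\lambda/\beta}[1]\,Q_{\beta/\mu}\!\left[\tfrac{s}{1-s}\right]=Q_{\lambda/\mu}\!\left[\tfrac{1}{1-s}\right]$, which is exactly (1) after translating back. I would note that this holds for \emph{any} $\mu\subseteq\lambda$, with no constraint on lengths; this generality is needed in the next step.

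For part (2) I would deduce it from (1). When $l(\lambda)=n$ and $l(\mu)=n-1$, the difference of the sums in (1) and (2) is $\sum_{l(\beta)=n,\,\mu\subseteq\beta\preceq\lambda}\phi_{\lambda/\beta}(s)\,s^{|\beta/\mu|}sk_{\beta/\mu}(s)$. For $l(\beta)=n$, $\beta\preceq\lambda$ forces $\lambda_n\ge\beta_n\ge1$; set $\widetilde\lambda=\lambda-(1^n)$, $\gamma=\beta-(1^n)$, $\widetilde\mu=\mu-(1^{n-1})$. Comparing conjugate partitions, one checks the column-removal identities $\phi_{\lambda/\beta}(s)=\phi_{\widetilde\lambda/\gamma}(s)$, $sk_{\beta/\mu}(s)=\binom{n-\mu_2'}{1}_s\,sk_{\gamma/\widetilde\mu}(s)$, $sk_{\lambda/\mu}(s)=\binom{n-\mu_2'}{1}_s\,sk_{\widetilde\lambda/\widetilde\mu}(s)$, and $|\beta/\mu|=|\gamma/\widetilde\mu|+1$, and that $\beta\mapsto\gamma$ is a bijection from $\{l(\beta)=n,\ \mu\subseteq\beta\preceq\lambda\}$ onto $\{\widetilde\mu\subseteq\gamma\preceq\widetilde\lambda\}$. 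Substituting and applying part (1) to the pair $\widetilde\mu\subseteq\widetilde\lambda$,
\[
\sum_{l(\beta)=n}\phi_{\lambda/\beta}(s)\,s^{|\beta/\mu|}sk_{\beta/\mu}(s)=s\binom{n-\mu_2'}{1}_s\sum_{\gamma}\phi_{\widetilde\lambda/\gamma}(s)\,s^{|\gamma/\widetilde\mu|}sk_{\gamma/\widetilde\mu}(s)=s\binom{n-\mu_2'}{1}_s\,sk_{\widetilde\lambda/\widetilde\mu}(s)=s\cdot sk_{\lambda/\mu}(s),
\]
so the sum in (2) equals $sk_{\lambda/\mu}(s)-s\cdot sk_{\lambda/\mu}(s)=(1-s)\,sk_{\lambda/\mu}(s)$. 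As an alternative, part (2) can be obtained directly from Theorem \ref{q_pfaff} at $q=0$ by specializing the leading parameter to $s^{\,n-1}$, so that the summand $q$-Pochhammer $(s^{n-1})_\beta$ vanishes for $l(\beta)\ge n$, at the cost of invoking product formulas for skew $Q$-functions at a finite geometric alphabet.

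The hard part will be the bookkeeping in part (2): one must track precisely how the column lengths $\lambda_j'$, $\mu_j'$ and the multiplicities $m_j(\mu)$ transform when the first column is removed, in order to see that the \emph{same} factor $\binom{n-\mu_2'}{1}_s$ appears in $sk_{\beta/\mu}$ and in $sk_{\lambda/\mu}$ but not in $\phi_{\lambda/\beta}$, so that it cancels in the final telescoping. Once the three column-removal identities and the index bijection are in place, the rest is an immediate application of the coproduct formula (i.e. the limiting case of Rains' identity) proved in part (1).
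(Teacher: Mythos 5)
Your proposal is correct and follows essentially the same route as the paper: part (1) is the $q\to 0$ degeneration of Rains' identity (equivalently the coproduct formula for skew Hall--Littlewood $Q$-functions with alphabets $1$ and $t^2/(1-t^2)$), and part (2) is obtained by isolating the $l(\beta)=n$ terms, reindexing $\beta=\gamma+1^n$, and reapplying (1). In fact your bookkeeping in (2) is slightly more careful than the paper's: the paper asserts $sk_{(\gamma+1^n)/\mu}(t^2)=sk_{\gamma/(\mu-1^{n-1})}(t^2)$ outright, whereas (as you note) the two sides differ by the factor $\binom{n-\mu_2'}{1}_{t^2}$, which appears identically in the corresponding identity for $sk_{\lambda/\mu}$ and cancels, so the conclusion is unaffected.
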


\begin{proof}
Take $a = q$, $b = qt$, $c = q^2$ in in Theorem \ref{q_pfaff}:
\begin{equation*}
\sum_{\beta} \frac{(q)_\beta}{(q^2)_\beta} Q_{\lambda / \beta}\biggl(\frac{q-qt}{1-t}\biggr)Q_{\beta/\mu}\biggl(\frac{qt-q^2}{1-t}\biggr)
  = \frac{(q)_\mu (qt)_{\lambda}}{(qt)_\mu (q^2)_\lambda}Q_{\lambda/\mu}\biggl(\frac{q-q^2}{1-t}\biggr).  
\end{equation*}
Using the relation $Q_{\lambda / \mu}\bigl(\frac{aq-bq}{1-t}\bigr) = q^{|\lambda / \mu|} Q_{\lambda / \mu}\bigl(\frac{a-b}{1-t}\bigr)$, we have
\begin{equation*}
\sum_{\beta} \frac{(q)_\beta}{(q^2)_\beta} Q_{\lambda / \beta}\bigl(1\bigr)Q_{\beta/\mu}\biggl(\frac{t-q}{1-t}\biggr)
  = \frac{(q)_\mu (qt)_{\lambda}}{(qt)_\mu (q^2)_\lambda}Q_{\lambda/\mu}\biggl(\frac{1-q}{1-t}\biggr).  
\end{equation*}
(1) then follows by taking the limit $q \to 0$.

To prove (2), it suffices by (1) to show that
\begin{equation*}
  \sum_{\substack{\beta \preceq \lambda \\ l(\beta) = n}} \phi_{\lambda/ \beta}(t^{2}) t^{2|\beta/\mu|} sk_{\beta/\mu}(t^{2}) = t^2 \cdot sk_{\lambda/\mu}(t^2).
\end{equation*}
Reindex the sum by replacing $\beta = \beta' + 1^n$.  Then we have 
\begin{equation*}
  \sum_{\beta' \preceq \lambda - 1^n} \phi_{\lambda/ (\beta' + 1^n)}(t^{2}) t^{2|\beta'/\mu|} t^{2n} sk_{(\beta' + 1^n)/\mu}(t^{2}) 
\end{equation*}
We have the following two identities:
\begin{align*}
  \phi_{\lambda / (\beta' + 1^n)}(t^2) &= \phi_{(\lambda - 1^n) / \beta'}(t^2) \\
  sk_{(\beta' + 1^n) / \mu}(t^2)       &= sk_{\beta' / (\mu - 1^{(n-1)})}(t^2),
\end{align*}
which can be seen by using the explicit formulas in Section 2. It follows that
\begin{equation*}
  \sum_{\beta' \preceq (\lambda-1^n)} \phi_{(\lambda-1^n)/ \beta'}(t^{2}) t^{2|\beta'/(\mu-1^{n-1})|} t^2 sk_{\beta'/(\mu-1^{(n-1)})}(t^{2}) = t^2 \cdot sk_{(\lambda - 1^n)/(\mu -1^{(n-1)})}(t^2).
\end{equation*}
Applying the identity above again completes the proof.
\end{proof}

We now provide a proof of Theorem \ref{trestr}, mentioned in the introduction.  The proof relies on the previous results of this section.

 \begin{proof}[Proof of Theorem \ref{trestr}]
 By Proposition \ref{hl_pfaff_saal}, we have
 \begin{equation*}
 c_{\lambda, \mu}(t) = \frac{b_{\mu}(t^{2})}{b_{\lambda}(t^{2})} \sum_{\substack{\beta \preceq \lambda \\ l(\beta) \leq n-1}} \phi_{\lambda/ \beta}(t^{2}) t^{2|\beta/\mu|} sk_{\beta/\mu}(t^{2}) = \frac{b_{\mu}(t^{2})}{b_{\lambda}(t^{2})} (1-t^{2}) sk_{\lambda/\mu}(t^{2}),
 \end{equation*}
 which gives the first equality.
 By the definitions of $b_{\lambda}(t), sk_{\lambda/\mu}(t)$, this is equal to
 \begin{multline*}
 \frac{\prod_{i \geq 1} \phi_{m_{i}(\mu)}(t^{2})}{\prod_{i \geq 1} \phi_{m_{i}(\lambda)}(t^{2})}(1-t^{2}) t^{2\sum_{j} \binom{\lambda_{j}'-\mu_{j}'}{2}} \prod_{j \geq 1} \binom{\lambda_{j}'-\mu_{j+1}'}{\mu_{j}' - \mu_{j+1}'}_{t^{2}} \\
 = (1-t^{2}) t^{2\sum_{j} \binom{\lambda_{j}'-\mu_{j}'}{2}}\prod_{j \geq 1} \frac{\phi_{\lambda_{j}'-\mu_{j+1}'}(t^{2})}{\phi_{\lambda_{j}' - \lambda_{j+1}'}(t^{2}) \phi_{\lambda_{j}' - \mu_{j}'}(t^{2})} 
 \\= \frac{(1-t^{2})}{\phi_{\lambda_{1}'-\mu_{1}'}(t^{2})} t^{2\sum_{j}  \binom{\lambda_{j}'-\mu_{j}'}{2}} \prod_{j \geq 1} \frac{\phi_{\lambda_{j}'-\mu_{j+1}'}(t^{2})}{\phi_{\lambda_{j}' - \lambda_{j+1}'}(t^{2}) \phi_{\lambda_{j+1}' - \mu_{j+1}'}(t^{2})}\\
 = t^{2\sum_{j} \binom{\lambda_{j}'-\mu_{j}'}{2}} \prod_{j \geq 1} \binom{\lambda_{j}'-\mu_{j+1}'}{\lambda_{j}'-\lambda_{j+1}'}_{t^{2}},
 \end{multline*}
where we have used $m_{i}(\mu) = \mu_{i}' - \mu_{i+1}'$ and $\lambda_{1}' - 1 =
\mu_{1}'$ (because $l(\lambda) = n$ and $l(\mu) = n-1$).
\end{proof}

 We recall that, as mentioned in the introduction, there is a $p$-adic interpretation for coefficients $sk_{\lambda/\mu}(t)$ and thus for $c_{\lambda, \mu}(t)$.  More precisely,
 \begin{equation*}
 sk_{\lambda/\mu}(t) = t^{n(\lambda) - n(\mu)} \alpha_{\lambda}(\mu; t^{-1});
 \end{equation*}
 where $\alpha_{\lambda}(\mu;p)$ is the number of subgroups of type $\mu$ in a finite abelian $p$-group of type $\lambda$, see \cite{W} for example, and the references therein.

\begin{proof}[Proof of Theorem \ref{thm:gt_padic}]
Recall that for $S = (\mu^{(0)} \supset \mu^{(1)} \supset \dotsb \supset \mu^{(n-1)})$ with $\mu^{(i)} \in \mathcal P_+^{(n-i)}$, we defined the coefficient $sk_{S}(t)$ as a product of $sk_{\mu^{(i-1)}/\mu^{(i)}}(t)$ in (\ref{skScoeff}).  By Definition \ref{def:gt_index}, one can associate to $S$ a Gelfand-Tsetlin array $\Lambda$.  Thus, using Theorem \ref{trestr}, we have
\begin{equation*}
\lim_{q \rightarrow 0} c_{\Lambda}(q,t) = \frac{(1-t^{2})^{n}}{b_{\lambda}(t^{2})}sk_{S}(t^{2}).
\end{equation*}
Using this along with Theorem \ref{qtthm} gives the result.
\end{proof}
 
Note that when $t=p^{-1}$ for $p$ an odd prime, the coefficients appearing in both Theorems \ref{trestr} and \ref{thm:gt_padic} are explicit $p$-adic counts.

 \begin{corollary}
 Let $\lambda$ be a partition.  We have the following formula for the Hall-Littlewood polynomial:
 \begin{equation*} 
 P_{\lambda}(x_{1}, \dots, x_{n}; t^{2}) = \frac{1}{b_{\lambda}(t^{2})} \frac{\displaystyle\sum_{\substack{S = (\lambda = \mu^{(0)} \supset
    \mu^{(1)} \supset \dotsb \supset \mu^{(n-1)}) \\ \mu^{(i)} \in \mathcal P_+^{(n-i)}}} sk_{S}(t^{2})x^{wt(S)}}{\displaystyle \sum_{\substack{S' = ( 0^{n} = \mu^{(0)} \supset
    \mu^{(1)} \supset \dotsb \supset \mu^{(n-1)}) \\ \mu^{(i)} \in \mathcal P_+^{(n-i)}}} sk_{S'}(t^{2})x^{wt(S')}}.
 \end{equation*}
 \end{corollary}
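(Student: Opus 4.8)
The plan is to combine the second half of Theorem~\ref{EKfin} with the $q \to 0$ asymptotics furnished by Theorem~\ref{thm:gt_padic}. Recall that Theorem~\ref{EKfin} gives $P_\lambda(x;q^2,t^2) = \widetilde{\Phi}_\lambda(x;q,t)/\widetilde{\Phi}_0(x;q,t)$ as an identity in $\C(q,t)[[x_1,\dots,x_n]]$. First I would let $q \to 0$ on the left-hand side: since the Hall--Littlewood polynomial is the $q=0$ degeneration of the Macdonald polynomial (\cite{Mac}), and $P_\lambda$ is a polynomial in the $x_i$ whose coefficients lie in $\C(q,t)$ and are regular at $q=0$, the limit exists and equals $P_\lambda(x_1,\dots,x_n;t^2)$.

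Next I would pass to the limit on the right-hand side. Applying Theorem~\ref{thm:gt_padic} once with shape $\lambda$ and once with shape $0^n$, both $\lim_{q\to 0}\widetilde{\Phi}_\lambda(x;q,t)$ and $\lim_{q\to 0}\widetilde{\Phi}_0(x;q,t)$ exist; by Theorem~\ref{thm:gt_padic} the former equals $\frac{(1-t^2)^n}{b_\lambda(t^2)}\sum_S sk_S(t^2)x^{wt(S)}$ and the latter equals $\frac{(1-t^2)^n}{b_{0^n}(t^2)}\sum_{S'} sk_{S'}(t^2)x^{wt(S')}$, where $S$ (resp.\ $S'$) runs over the chains described in Theorem~\ref{thm:gt_padic} starting from $\lambda$ (resp.\ $0^n$). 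To conclude I would cancel the common factor $(1-t^2)^n$ and use that $b_{0^n}(t^2) = \prod_{i\ge 1}\phi_{m_i(0^n)}(t^2) = 1$, since $m_i(0^n)=0$ for all $i\ge 1$; assembling these observations gives precisely the claimed formula.

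The one point requiring care is the interchange of $q\to 0$ with division by $\widetilde{\Phi}_0$. This is legitimate because the limiting denominator is a nonzero element of $\C(t)[[x_1,\dots,x_n]]$: by the shape-$0^n$ computation above its lowest-degree term is $(1-t^2)^n/b_{0^n}(t^2)\neq 0$, so for $q$ near $0$ the coefficients of the quotient $\widetilde{\Phi}_\lambda/\widetilde{\Phi}_0$ are rational --- hence continuous --- functions of the (regular-at-$q=0$) coefficients of $\widetilde{\Phi}_\lambda$ and $\widetilde{\Phi}_0$, and the limit may be taken term by term. Apart from this, the argument is a direct substitution of results already proved in the paper, so I expect no further obstacles.
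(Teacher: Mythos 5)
Your proposal is correct and follows essentially the same route as the paper, whose proof of this corollary is exactly "combine Theorem \ref{EKfin} with Theorem \ref{thm:gt_padic}": take $q\to 0$ in $P_\lambda(x;q^2,t^2)=\widetilde{\Phi}_\lambda/\widetilde{\Phi}_0$, apply the limit formula to both trace functions, and cancel $(1-t^2)^n$ using $b_{0^n}(t^2)=1$. Your extra remark justifying the term-by-term limit of the quotient is a harmless elaboration of the same argument.
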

 \begin{proof}
 Follows from Theorem \ref{EKfin} along with Theorem \ref{thm:gt_padic}.
 \end{proof}

\section{Verma modules and algebraically independent $t$} \label{sec:verma}

We have computed the expansion of the Macdonald vector-valued characters
$\Phi^{(k)}_\lambda(x; q)$ with respect to the Gelfand-Tsetlin basis of
$V_{\lambda + (k-1)\rho}$.  These are expressed in terms of rational functions
in $q,t$ which appear naturally in symmetric function theory, specialized to
$t=q^k$.  In the previous section we showed that, for algebraically independent
$t$, these coefficients admit a simple limit as $q \to 0$, which is related to
natural quantities appearing in $p$-adic representation theory.  Note however
that in the representation theoretic realization of $\Phi^{(k)}_\lambda$ we
have $t=q^k$, and hence we can only obtain the $t=0$ specialization of our
formula in the $q \to 0$ limit.

In \cite{EK}, Etingof and Kirillov showed that one can extend
$\Phi^{(k)}_\lambda$ to algebraically independent $t$ by replacing the
finite-dimensional irreducible module $V_{\lambda + (k-1)\rho}$ by a suitable
infinite dimensional irreducible Verma module.  In this section we outline
their construction, which allows us to obtain a representation theoretic
realization of our formula for algebraically independent $t$.

Consider the algebra $\C(t) \otimes U_q(\mathfrak{gl}_n)$, i.e. the quantum
group $U_q(\mathfrak {gl_n})$ where the coefficient field is expanded to $\C(t)
\otimes \C(q)$ (note this can be identified with the subalgebra of $\C(q,t)$
spanned by products of the form $r_1(q)\cdot r_2(t)$ for rational functions
$r_1, r_2$).  We have the following analogues of the finite-dimensional modules
$V_{\lambda + (k-1) \rho}$:

\begin{definition}
For $\lambda \in \mathcal P^{(n)}$, the module $M_{\lambda, t}$ over $\C(t) \otimes
U_q(\mathfrak{gl}_n)$ is uniquely defined by the following conditions:
\begin{enumerate}
  \item There is a \textit{highest weight vector} $m_\lambda \in M_{\lambda, t}$ satisfying:
    \begin{align*}
      e_i \cdot m_\lambda &= 0, & (1 \leq &i \leq n-1) \\
      q^{\epsilon_i} \cdot m_\lambda &= t^{2\rho_i} \cdot q^{2\cdot (\lambda_i-\rho_i)} \cdot m_\lambda, & (1 \leq &i \leq n)
    \end{align*}
  \item $g \mapsto g \cdot m_\lambda$ is a bijection from $\C(t) \otimes
    U_q^-(\mathfrak{gl}_n)$ to $M_{\lambda, t}$, where $U_q^{-}(\mathfrak{gl}_n)$ denotes the
    subalgebra generated by $f_1,\dotsc,f_{n-1}$.
\end{enumerate}
\end{definition}

Under the identification $t = q^k$, where $k$ is now a formal parameter, $M_{\lambda, t}$ is
isomorphic to the \textit{Verma module} of weight $\lambda + (k-1)\rho$ (see
e.g. \cite{EK}).  Let us attempt to clarify the relationship between
$M_{\lambda, t}$ and the finite-dimensional modules $V_{\lambda + (k-1)\rho}$.

Firstly, for $k \in \N$ there is a quotient mapping $\C(t) \otimes U_q(\mathfrak{gl}_n) \to
U_q(\mathfrak{gl}_n)$ sending $t\to q^k$.  Moreover, for $k\in \N$ then there is a $\C(q)$-linear
map $\alpha_k: M_{\lambda, t} \to V_{\lambda + (k-1)\rho}$ which is compatible with the module
structures in the sense that the following diagram commutes:
\begin{equation*}
  \xymatrix{
    \bigl(\C(t) \otimes U_q(\mathfrak{gl}_n)\bigr) \otimes M_{\lambda, t} \ar[d]_{(t
    \mapsto q^k) \otimes \alpha_k} \ar[rr]^(.65){g\otimes v \mapsto
    g\cdot v} &  & M_{\lambda, t}
      \ar[d]^{\alpha_k} \\
      U_q(\mathfrak{gl}_n) \otimes V_{\lambda + (k-1)\rho} \ar[rr]^(.55){g \otimes v \mapsto g\cdot v} &
      & V_{\lambda + (k-1)\rho}
}
\end{equation*}
(We take the convention here that $V_{\lambda + (k-1)\rho} = \{0\}$ if
$\lambda + (k-1)\rho \notin \mathcal P_+$, which can occur for only finitely
many $k$).  The kernels of $\alpha_k$ form a decreasing sequence of subspaces
of $M_{\lambda, t}$, and
\begin{equation*}
  \bigcap_{k \geq 0} \ker \alpha_k = \{0\}.
\end{equation*}
The existence of $\alpha_k$ satisfying the above conditions determines the module $M_{\lambda, t}$
uniquely.

The analogue of the finite-dimensional module $U \simeq V_{(k-1)\cdot (n-1,1,\dotsc,1)}$ is as
follows:
\begin{definition}
The module $W_{t}$ over $\C(t) \otimes U_q(\mathfrak{gl}_n)$ is the degree zero subspace of Laurent
polynomials
\begin{equation*}
  W_t = \{p(x) \in \C(t)\otimes \C(q)[x_1^{\pm 1} x_2^{\pm 1}\dotsb x_n^{\pm 1}]\; | \; \deg p = 0 \},
\end{equation*}
with the following action of the generators of $U_q(\mathfrak{gl}_n)$:
\begin{align*}
  \epsilon_i(p(x)) &= x_i \cdot \frac{\partial}{\partial x_i} p(x) \\
  e_i(p(x)) &= \frac{x_i}{x_{i+1}} \cdot \frac{(tq^{-1}) p(x_1,\dotsc,qx_{i+1},
  \dotsc, x_n)\negthinspace -\negthinspace (t^{-1}q) p(x_1,\dotsc,q^{-1}x_{i+1},\dotsc,x_n)}{(q-q^{-1})}\\
  f_i(p(x)) &= \frac{x_{i+1}}{x_{i}} \cdot \frac{(tq^{-1}) p(x_1,\dotsc,qx_{i},
  \dotsc, x_n) - (t^{-1}q) p(x_1,\dotsc,q^{-1}x_{i},\dotsc,x_n)}{(q-q^{-1})}
\end{align*}

\end{definition}

This is isomorphic to the module denoted $W_k$ in \cite{EK}, and is irreducible over $\C(t)
\otimes U_q(\mathfrak{gl}_n)$.  If $k \in \N$ is a fixed integer, we can quotient $W_t$ by
the relation $t = q^k$ to obtain an infinite-dimensional module over $U_q(\mathfrak{gl}_n)$.  This
is no longer irreducible, and the subspace spanned by $p(x)$ with $(x_1\dotsc x_n)^{(k-1)}\cdot p(x)
\in \C(q)[x_1,\dotsc,x_n]$ is identified with the module $U$.

It is shown in \cite{EK} that there is a unique intertwining operator
$\widetilde{\phi}: M_{\lambda, t} \to M_{\lambda,t} \otimes W_t$ if and only if
$\lambda \in \mathcal P_+$.  Moreover, for $k \in \N$ the intertwining
operators $\widetilde{\phi}$ and $\phi^{(k)}$ are compatible in the sense that
the following diagram commutes:
\begin{equation}\label{eq:verma_intertwine_compat}
  \xymatrix{M_{\lambda, t} \ar[r]^(.4){\widetilde{\phi}} \ar[d]_{\alpha_k} &
  M_{\lambda, t} \otimes W_t
  \ar[d]^{\alpha_k \otimes (\mathrm{\Proj}_U \circ (t\mapsto q^k))} \\
  V_{\lambda + (k-1)\rho} \ar[r]^(.4){\phi^{(k)}} & V_{\lambda + (k-1)\rho} \otimes U} 
\end{equation}
The weight-zero subspace of $W_t$ is one dimensional, which allows us to define the trace function
$\widetilde \Phi(x; q, t) \in \C(q,t)[[x_1,\dotsc,x_n]]$ of $\widetilde \phi$.  The compatibility
\eqref{eq:verma_intertwine_compat} implies the relation $\widetilde \Phi(x; q, q^k) = \Phi^{(k)}(x)$
mentioned in Theorem \ref{EKfin}.

The analogue of the Gelfand-Tsetlin basis for $M_{\lambda, t}$ is obtained by
iterating the multiplicity one decomposition over $U(\mathfrak{gl}_{n-1})$, as
in the finite-dimensional case. 

\begin{proposition}
  We have the following restriction rule for $M_{\lambda, t}$ as a module over $\C(t) \otimes U_q(\mathfrak{gl}_{n-1}) \subset \C(t) \otimes U_q(\mathfrak{gl}_n)$:
\begin{equation*}
  \prescript{}{\bigl(\C(t) \otimes U_q(\mathfrak{gl}_{n-1})\bigr)}{M_{\lambda, t}} \simeq
  \bigoplus_{\substack{\mu \in \mathcal P^{(n-1)}\\\mu \subset \lambda}} M_{\mu - (\frac{1}{2})^{(n-1)}, t}
\end{equation*}
By iterating the restriction rule above we obtain a basis for $M_{\lambda, t}$
which is indexed by chains $\lambda = \mu^{(0)} \supset \mu^{(1)} \supset
\dotsb \supset \mu^{(n-1)}$ with $\mu^{(i)} \in \mathcal P^{(n-i)}$.  We refer
to this as the Gelfand-Tsetlin basis for $M_{\lambda, t}$.
\end{proposition}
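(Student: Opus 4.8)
The plan is to prove the restriction rule directly and then deduce the Gelfand--Tsetlin statement by iteration. First I would exhibit $M_{\lambda,t}$, restricted to $\C(t)\otimes U_q(\mathfrak{gl}_{n-1})$, as carrying a Verma filtration. By the triangular decomposition, $\C(t)\otimes U_q^-(\mathfrak{gl}_n)$ is free as a left $\bigl(\C(t)\otimes U_q^-(\mathfrak{gl}_{n-1})\bigr)$-module on the subalgebra $B$ generated by the root vectors $F_{\epsilon_i-\epsilon_n}$, $1\le i\le n-1$. Since $\epsilon_i-\epsilon_n$ and $\epsilon_j-\epsilon_n$ never sum to a root, these root vectors $q$-commute, so $B$ is a quantum polynomial ring with a PBW basis indexed by $c\in\Z_{\ge0}^{n-1}$, and its degree-$d$ part, under the adjoint action of $U_q(\mathfrak{gl}_{n-1})$, is a $q$-deformation of the $d$-th symmetric power of the dual vector representation, whose weights all have multiplicity one. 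Hence $M_{\lambda,t}=\bigl(\C(t)\otimes U_q^-(\mathfrak{gl}_{n-1})\bigr)\cdot(B\cdot m_\lambda)$, and filtering $B$ by degree yields a filtration of $M_{\lambda,t}$ by $\C(t)\otimes U_q(\mathfrak{gl}_{n-1})$-submodules whose associated graded, refined by weight, is a direct sum of modules $M_{\mu-(\frac12)^{(n-1)},t}$ (up to a one-dimensional twist of the Cartan action, trivial over $U_q(\mathfrak{sl}_{n-1})$), one for each $\mu=(\lambda_1-c_1,\dots,\lambda_{n-1}-c_{n-1})$ with $c\in\Z_{\ge0}^{n-1}$, that is, one for each $\mu\subset\lambda$. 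The shift by $(\frac12)^{(n-1)}$ is forced by \eqref{rhoeq}, which records $\rho_n|_{n-1}-\rho_{n-1}=(\frac12)^{(n-1)}$.

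Next I would show this filtration splits, taking $t$ transcendental over $\C(q)$. Each $M_{\mu-(\frac12)^{(n-1)},t}$ is irreducible: the Shapovalov vanishing condition $\langle\nu+\rho_{n-1},\alpha^\vee\rangle\in\Z_{>0}$ is a nonconstant function of $t$ for every positive root $\alpha$ of $\mathfrak{gl}_{n-1}$, hence fails generically. The distinct summands moreover lie in distinct blocks: the $q^{\epsilon_i}$-eigenvalue on the top vector of the $\mu$-summand is $t^{2(\rho_n)_i}q^{2(\mu_i-(\rho_n)_i)}$, and as the $(\rho_n)_i$ are pairwise distinct the $t$-degrees separate the coordinates, so the $U_q(\mathfrak{gl}_{n-1})$-central character determines $\mu$. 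Since there are no nonsplit extensions between modules lying in distinct blocks, the Verma filtration splits, which gives the asserted isomorphism. (Alternatively the splitting can be deduced from the finite-dimensional multiplicity-one branching $V_{\lambda+(k-1)\rho_n}|_{U_q(\mathfrak{gl}_{n-1})}=\bigoplus_{\bar\mu\preceq\lambda+(k-1)\rho_n}V_{\bar\mu}$, together with the compatibility of each $\alpha_k$ with the $U_q(\mathfrak{gl}_{n-1})$-actions and the relation $\bigcap_k\ker\alpha_k=\{0\}$.)

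Finally, iterating the restriction rule along $U_q(\mathfrak{gl}_n)\supset U_q(\mathfrak{gl}_{n-1})\supset\cdots\supset U_q(\mathfrak{gl}_1)$ decomposes $M_{\lambda,t}$ into one-dimensional pieces indexed by chains $\lambda=\mu^{(0)}\supset\mu^{(1)}\supset\cdots\supset\mu^{(n-1)}$ with $\mu^{(i)}\in\mathcal P^{(n-i)}$; under $t=q^k$ the accumulated half-integer shifts reproduce the $(k-1)\rho$-corrections in Definition~\ref{def:gt_index}, so these chains are exactly the Gelfand--Tsetlin patterns. I expect the main obstacle to be the splitting step: the Verma filtration exists on purely formal grounds, but its splitting genuinely requires the genericity of $t$ (or, in the alternative route, the finite-dimensional branching rule), while a secondary but unavoidable nuisance is bookkeeping the half-integer Cartan shifts and the accompanying twist, which is precisely where \eqref{rhoeq} is used.
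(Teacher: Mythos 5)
Your argument is correct, and it takes a genuinely different route from the paper. The paper disposes of this proposition in two lines: it declares the restriction rule ``well known to experts'' and says it can be proved using the maps $\alpha_k$ together with the multiplicity-free decomposition of $V_{\lambda+(k-1)\rho}$ over $U_q(\mathfrak{gl}_{n-1})$ (i.e.\ exactly your parenthetical alternative: specialize at $t=q^k$ for all large $k$, use compatibility of $\alpha_k$ with the $U_q(\mathfrak{gl}_{n-1})$-action and $\bigcap_k \ker\alpha_k=\{0\}$ to lift the finite-dimensional branching of Proposition \ref{finbrrule}). Your primary argument is instead intrinsic to the Verma setting: a PBW-type Verma flag coming from the $q$-commuting root vectors $F_{\epsilon_i-\epsilon_n}$, followed by a splitting argument from generic-$t$ irreducibility (Shapovalov) and separation of central characters, where the distinct $t$-degrees $2(\rho_n)_i$ of the Cartan eigenvalues force distinct linkage classes. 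What each approach buys: yours is self-contained, makes transparent exactly where genericity of $t$ is used, and does not depend on the finite-dimensional theory; the paper's route avoids quantum PBW and block/linkage technicalities by recycling the already-proved finite-dimensional statement and the defining property of $M_{\lambda,t}$. Two small points of care in your write-up: for the infinite filtration, ``no extensions between distinct blocks'' should really be phrased as decomposing the weight module (finite-dimensional weight spaces, locally finite action of the center) into its block components, each of which contains a single, irreducible Verma subquotient --- this is standard but worth saying; and your explicit flagging of the residual one-dimensional $t$-power twist of the Cartan action relative to the normalization $M_{\mu-(\frac12)^{(n-1)},t}$, forced by \eqref{rhoeq}, is if anything more careful than the paper's statement, which silently absorbs it.
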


\begin{proof}
This is well known to experts.  It can be proved using the maps $\alpha_k$ and the
decomposition of $V_{\lambda + (k-1)\rho}$ over $U_q(\mathfrak{gl}_{n-1})$.
\end{proof}

\begin{theorem}
With respect to the Gelfand-Tsetlin basis of $M_{\lambda, t}$, the diagonal
coefficient of the intertwining operator $\widetilde \phi$ corresponding the
the chain $\lambda = \mu^{(0)} \supset \mu^{(1)} \supset \dotsb \supset
\mu^{(n-1)}$ is equal to:
\begin{equation*}
\begin{cases}
  \displaystyle\prod_{1 \leq i \leq n} c_{\mu^{(i-1)}, \mu^{(i)}}(q, t), & \mu^{(i)} \in \mathcal P_+ \text{ for } 0 \leq i \leq n-1\\
  0,  & \text{otherwise}
\end{cases}.
\end{equation*}
\end{theorem}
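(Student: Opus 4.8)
The plan is to transcribe the argument of Section~\ref{sec: findim} to the Verma modules $M_{\lambda,t}$, using the restriction rule of the preceding Proposition in place of the multiplicity-one decomposition of $V_{\lambda+(k-1)\rho}$ over $U_q(\mathfrak{gl}_{n-1})$. First I would establish the operator-level branching rule for $\widetilde\phi$, arguing exactly as in the proof of Proposition~\ref{finbrrule}: for each summand $M_{\mu-(\frac{1}{2})^{(n-1)},t}$ of the restriction of $M_{\lambda,t}$, the composite of $\widetilde\phi$ with the projection onto that summand, tensored with the identity on $W_t$, is a $\C(t)\otimes U_q(\mathfrak{gl}_{n-1})$-intertwiner $M_{\mu-(\frac{1}{2})^{(n-1)},t}\to M_{\mu-(\frac{1}{2})^{(n-1)},t}\otimes W_t$. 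Composing further with the projection of $W_t$ onto its one-dimensional weight-zero line --- which lies inside the copy of the $\mathfrak{gl}_{n-1}$-analogue of $W_t$ sitting in $W_t|_{\mathfrak{gl}_{n-1}}$ --- and invoking the uniqueness up to scalar of such intertwiners, this composite equals a scalar $\tilde c_{\lambda,\mu}(q,t)$ times the normalized lower-rank intertwiner attached to that summand when $\mu\in\mathcal P_+$, and is zero otherwise, since then no nonzero such intertwiner exists. Taking the trace of $\widetilde\phi\cdot x^{h}$ and using that $x^h$ respects the $U_q(\mathfrak{gl}_{n-1})$-isotypic grading --- so that $x_n^{h_n}$ acts by a scalar on each summand --- this yields, once normalization conventions are matched, the branching rule $\widetilde\Phi^{(n)}_\lambda(x;q,t)=\sum_{\mu\subset\lambda}\tilde c_{\lambda,\mu}(q,t)\,x_n^{\rho(\lambda,\mu)}\,\widetilde\Phi^{(n-1)}_\mu(x;q,t)$.

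The second step is to identify $\tilde c_{\lambda,\mu}(q,t)$ with the coefficient $c_{\lambda,\mu}(q,t)$ of~\eqref{eq:branchcoeff}. Theorem~\ref{qtthm} provides the same expansion with $c_{\lambda,\mu}(q,t)$ in place of $\tilde c_{\lambda,\mu}(q,t)$, so it suffices to observe that the functions $\{\widetilde\Phi^{(n-1)}_\mu(x;q,t)\}_\mu$ are linearly independent over $\C(q,t)$; this is immediate from Theorem~\ref{EKfin}, since $\widetilde\Phi^{(n-1)}_\mu=\widetilde\Phi^{(n-1)}_0\cdot P_\mu(x;q^2,t^2)$ and the Macdonald polynomials form a basis. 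Two variants are available if one prefers: one may rerun the symmetric-function computation in the proof of Theorem~\ref{qtthm} with $q^k$ replaced by an algebraically independent $t$, the $q$-binomial theorem, the Pieri rule of Definition~\ref{mPieri}, the branching rule of Theorem~\ref{MDbr}, and the generating-function identities for the $g_n$ being valid for generic $t$ and the evident $t$-analogue of Proposition~\ref{tracezeroratio} supplying the ratio $\widetilde\Phi^{(n)}_0/\widetilde\Phi^{(n-1)}_0$; or one may note that the diagonal coefficients of $\widetilde\phi$ lie in $\C(q,t)$, being coefficients of the unique intertwiner between modules defined over $\C(t)\otimes\C(q)$, that the compatibility square~\eqref{eq:verma_intertwine_compat} forces their specialization at $t=q^k$ to equal the corresponding coefficients of $\phi^{(k)}$, and that Theorem~\ref{qtthm} together with the shift-invariance of the $c_{\lambda,\mu}$ identifies those with $c_{\lambda,\mu}(q,q^k)$; agreement of two rational functions of $t$ at the infinitely many points $t=q^k$ then gives $\tilde c_{\lambda,\mu}(q,t)=c_{\lambda,\mu}(q,t)$.

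Finally, since the Gelfand--Tsetlin basis of $M_{\lambda,t}$ is obtained by iterating the restriction rule, the diagonal coefficient of $\widetilde\phi$ attached to a chain $\lambda=\mu^{(0)}\supset\dotsb\supset\mu^{(n-1)}$ is the product of the local scalars occurring at the successive steps; by the previous step this product is $\prod_i c_{\mu^{(i-1)},\mu^{(i)}}(q,t)$, the shift-invariance of the $c_{\lambda,\mu}$ absorbing the cumulative half-integer shifts introduced by the restriction rule, and it vanishes as soon as some $\mu^{(i)}\notin\mathcal P_+$, because at that step the relevant Verma module carries no nonzero intertwiner into its tensor product with $W_t$. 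I expect the main obstacle to be the first step: isolating enough of the structure of $W_t$ restricted to $\mathfrak{gl}_{n-1}$ to guarantee that the projected restriction of $\widetilde\phi$ is genuinely a scalar multiple of the lower-rank intertwiner, and bookkeeping the half-integer weight shifts and normalizations so that the branching rule emerges in precisely the form of Theorem~\ref{qtthm}. The infinite-dimensionality of $M_{\lambda,t}$ is harmless here, as the trace functions involved are honest formal power series in $x_1,\dotsc,x_n$.
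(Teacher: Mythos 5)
Your proposal is correct, but your primary route is genuinely different from the paper's. The paper's own proof is the short specialization argument that you list only as your second fallback variant: the compatibility square \eqref{eq:verma_intertwine_compat} together with Theorem \ref{qtthm} shows the claimed formula holds after setting $t=q^k$ for all sufficiently large $k\in\N$ (large enough that the chain actually corresponds to a Gelfand--Tsetlin pattern of $V_{\lambda+(k-1)\rho}$, i.e.\ the upper interlacing bounds of Definition \ref{def:gt_index} are satisfied), and since the diagonal coefficient is a rational function of $q,t$, agreement at infinitely many values $t=q^k$ determines it. Your main route instead transcribes the finite-dimensional argument of Section \ref{sec: findim} to the Verma modules: an operator-level branching rule for $\widetilde\phi$ via the restriction rule, uniqueness of intertwiners $M_{\mu,t}\to M_{\mu,t}\otimes W_t$, and then identification of the resulting scalars with $c_{\lambda,\mu}(q,t)$ by linear independence of the $\widetilde\Phi^{(n-1)}_\mu$ (which indeed follows from Theorem \ref{EKfin}). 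What your route buys is a direct, intrinsic derivation at generic $t$ that does not pass through the finite-dimensional modules at each $k$; what it costs is exactly the obstacle you flag yourself, namely controlling $W_t$ as a $\C(t)\otimes U_q(\mathfrak{gl}_{n-1})$-module well enough to invoke a multiplicity-one/uniqueness statement for the projected intertwiner (the analogue of the step in Proposition \ref{finbrrule} that the paper attributes to \cite{EK}), plus the half-integer shift bookkeeping. The paper's argument avoids all of this structural work; your vanishing claim when some $\mu^{(i)}\notin\mathcal P_+$ also follows more cheaply there, since the cited result of \cite{EK} gives nonexistence of the intertwiner precisely outside $\mathcal P_+$. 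In short: your proposal contains a complete correct proof (your second variant is the paper's), while your preferred route is a sound but heavier alternative whose missing ingredient is the uniqueness statement for the projected Verma intertwiner, which you would need to extract from \cite{EK} rather than from anything proved in this paper.
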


\begin{proof}
  Using the compatibility \eqref{eq:verma_intertwine_compat}, and Theorem
  \ref{qtthm}, one can see that the formula holds for $t=q^k$ when $k \in \N$
  is sufficiently large.  Since the coefficient is a rational function of $q,t$
  this determines it uniquely, and the result follows.

\end{proof}

\end{document}